\documentclass[10pt]{article}%
\usepackage{amsmath}
\usepackage{amsfonts}
\usepackage{mathrsfs}
\usepackage{amssymb,color}
\usepackage[linkcolor=black,anchorcolor=black,citecolor=black]{hyperref}
\usepackage{graphicx}
\numberwithin{equation}{section}
\usepackage[body={15.5cm,21cm}, top=3cm]{geometry}%
\providecommand{\U}[1]{\protect\rule{.1in}{.1in}}
\providecommand{\U}[1]{\protect \rule{.1in}{.1in}}
\newtheorem{theorem}{Theorem}[section]

\newtheorem{corollary}[theorem]{Corollary}

\newtheorem{definition}[theorem]{Definition}

\newtheorem{lemma}[theorem]{Lemma}

\newtheorem{proposition}[theorem]{Proposition}
\newtheorem{remark}[theorem]{Remark}
\newtheorem{assumption}[theorem]{Assumption}

\newenvironment{proof}[1][Proof]{\noindent \textbf{#1.} }{\  \rule{0.5em}{0.5em}}

\begin{document}
	\title{Reflected Stochastic Differential Equations Driven by $G$-Brownian Motion with Nonlinear Constraints}
	\author{ Hanwu Li\thanks{Research Center for Mathematics and Interdisciplinary Sciences, Shandong University, Qingdao 266237, Shandong, China. lihanwu@sdu.edu.cn.}
	\thanks{Frontiers Science Center for Nonlinear Expectations (Ministry of Education), Shandong University, Qingdao 266237, Shandong, China.}
    \thanks{Shandong Province Key Laboratory of Financial Risk, Shandong University, Qingdao 266237, Shandong, China.}}
	\date{}
	\maketitle
	\begin{abstract}
	In this paper, we study the reflected stochastic differential equations driven by $G$-Brownian motion (reflected $G$-SDEs) with two nonlinear constraints. With the help of the  Skorokhod problem with nonlinear constraints, we first study the doubly  reflected $G$-Brownian motion, which is constructed pathwise and lies in the same $G$-expectation space as the $G$-Brownian motion. For the reflected $G$-SDE, the uniqueness is derived from some a priori estimate and the existence is obtained by a Picard iteration method. The comparison theorem of the solution and the individual constraining processes are provided.
	\end{abstract}
	
	\textbf{Key words}: $G$-expectation, Skorokhod problem, reflected SDEs,  nonlinear reflection
	
	\textbf{MSC-classification}: 60G65, 60H10

\section{Introduction}
	 	In the 1960s, Skorokhod \cite{Skorokhod1} considered the problem of constructing solutions to  stochastic differential equations (SDEs) with reflecting boundaries.  Since then, a great deal of attention has been made to investigate the reflected solutions to stochastic differential equations. To name a few, Chaleyat-Maurel and El Karoui \cite{CE} studied  reflected SDEs on a half-line and used the Skorokhod map in a study of local times.  A multidimensional version of the Skorokhod problem was introduced by Tanaka \cite{Tanaka}, where the solutions are required to take values in a convex domain. Then, Lions and Sznitman \cite{LS} extended the results to the non-convex case. Slomi\'{n}ski \cite{Sl1,Sl2} considered the multidimensional reflected SDEs driven by semimartingales and proposed an Euler-Peano scheme. Jarni and Ouknine \cite{JO} investigated the reflected SDEs driven by optional semimartingales on  an unusual probability space.
		
		Due to the importance in applications  including queuing theory, control theory and finance (see, e.g. \cite{BN,EK,HH,MM,SW,W}), the Skorokhod problem has attracted considerable attention. One of the branches of the Skorokhod problem is the study of the case when there are two reflecting boundaries. Kruk et al. \cite{KLRS} presented an explicit formula for the Skorokhod map on a constant interval $[0,a]$. Then, Burdzy et al. \cite{BKR} investigated the Skorokhod problem in a time-dependent interval and provided an explicit representation for the so-called extended Skorokhod mapping. By the method based on the approach in \cite{KLRS}, Slaby \cite{S1} provided another representation formula for the Skorokhod problem with two time-dependent boundaries.  An alternative explicit formula for the extended Skorokhod mapping was given in \cite{S2}. 	 Slomi\'{n}ski and Wojciechowski \cite{SW1,SW2} introduced the SDEs with time-dependent reflecting barriers. Recently, Li \cite{Li} considered the Skorokhod problem with two nonlinear constraints. 
		
		It is worth mentioning that all the above literature considers the Skorokhod problem either in the deterministic setting or in the classical probability space. Therefore, they can not deal with the problems facing Knightian uncertainty, especially volatility uncertainty. One of the most powerful tools to study problems under volatility uncertainty is the $G$-expectation established by Peng \cite{P19}. Roughly speaking, the $G$-expectation is an upper expectation taking over a non-dominated family of probability measures. A new type of Brownian motion with stationary and independent increments, called $G$-Brownian motion, was constructed and the associated It\^{o}'s calculus was established. Building upon this framework, Gao \cite{G} investigated the stochastic differential equations driven by $G$-Brownian motion ($G$-SDEs for short) under a standard Lipschitz condition. Lin \cite{Lin} first tackled the reflecting problem where the solution of $G$-SDE is required to be above some given process. The multidimensional case is solved in \cite{Lin2} by the penalization method. Then, Lin and Soumana Hima \cite{LSH} generalized the above results to the case when the reflecting boundary is not convex and Soumana Hima and Dakaou \cite{SHD} established a large deviation principle for the solution. When the increasing process $K$ contributes to the coefficients, we may refer to the paper by Luo \cite{Luo}. Li and Ning \cite{LN23} considered the mean reflected $G$-SDE, where the constraint is given in terms of the expectation of the solution.

		The objective of this paper is to study the solvability of reflected $G$-SDEs with two nonlinear constraints. More precisely, the solution with initial value $x$, coefficients $f,h,g$ and constraints $l,r$ is a pair of processes $(X,A)$ satisfying 
		\begin{equation}\label{eq1.1}
		\begin{cases}
		X_t=x+\int_0^t f(s,X_s)ds+\int_0^t h(x,X_s)d\langle B\rangle_s+\int_0^t g(s,X_s)dB_s+A_t, \ t\in[0,T];\\
                 l(t,X_t)\leq 0\leq r(t,X_t), t\in[0,T], \textrm{ q.s.};\\
                 A_0=0, A=A^r-A^l \textrm{ and }
\int_0^T r(s,X_s)dA^r_s=\int_0^T l(s,X_s)dA^l_s=0, \textrm{ q.s.},\\
                \end{cases}
		\end{equation}
where $A^r$, $A^l$ are two increasing processes, called the constraining processes, such that $A^r$ pushes the solution upward and $A^l$ pulls the solution downward. They should behave in a minimal way satisfying the Skorokhod condition. 

In order to establish the well-posedness of the previous reflected $G$-SDE, we first consider the case of constant coefficients. The solution is constructed pathwise using the Skorokhod problem with two nonlinear constraints. However, an important feature in our setting is that the constraint functions $l,r$ are no longer bi-Lipschitz, which makes it unable to apply the results in \cite{Li} directly. The first contribution of the paper is that we investigate the Skorokhod problem with weaker conditions on the nonlinear constraints, establishing the existence and uniqueness result and the continuity property with respect to the input process and the inverse of the constraint functions. On the other hand, recall that the solution to a non-reflected $G$-SDE belong to the space $S^p_G(0,T)$ with continuous sample path. While for the reflected case studied in \cite{Lin,LSH,Luo}, the resulting process stays in the space $M_G^p(0,T)$, which consists of c\`{a}dl\`{a}g processes. The second contribution is that we establish a technical Lemma \ref{SGP}, which ensures the continuity of the solutions to the reflected $G$-SDEs. The existence of reflected $G$-SDEs with general Lipschitz coefficients can be proved by a fixed-point iteration method. Besides, based on the a priori estimates, we obtain the uniqueness result. Applying the extended $G$-It\^{o}'s formula proposed in \cite{Lin} and the characterization of the spaces $L_G^p(\Omega_T)$ and $M_G^p(0,T)$, we could also obtain the comparison theorem. Actually, the pathwise construction also allows us to derive some monotonicity results of the individual constraining processes.

The paper is organized as follows. In Section 2, we first introduce some notations and results in the $G$-framework and the Skorokhod problem with nonlinear constraints. Then, we study the reflected $G$-Brownian motion and some monotonicity properties with respect to the reflecting constraints in Section 3. Section 4 is devoted to the study of $G$-SDEs with nonlinear constraints, including the existence and uniqueness result as well as the comparison theorem.

\section{Preliminaries}

In this section, we review some basic notions and results of $G$-expectation, $G$-stochastic calculus and the Skorokhod problem with two reflecting boundaries. 

\subsection{$G$-expectation and $G$-It\^{o}'s calculus}

	Let $\Omega_T=C_{0}([0,T];\mathbb{R})$, the space of
real-valued continuous functions with $\omega_0=0$, be endowed
with the supremum norm and 
let  $B$ be the canonical
process. Set
\[
L_{ip} (\Omega_T):=\{ \varphi(B_{t_{1}},...,B_{t_{n}}):  \ n\in\mathbb {N}, \ t_{1}
,\cdots, t_{n}\in\lbrack0,T], \ \varphi\in C_{l,Lip}(\mathbb{R}^{ n})\},
\]
where $C_{l,Lip}(\mathbb{R}^{ n})$ denotes the set of local Lipschitz functions on $\mathbb{R}^{n}$.

Let $G:\mathbb{R}\rightarrow\mathbb{R}$ be a sublinear, continuous and monotone function  defined by
\begin{displaymath}
G(a):=\frac{1}{2}(\bar{\sigma}^2a^+-\underline{\sigma}^2a^-),
\end{displaymath}
where $0\leq \underline{\sigma}^2<\bar{\sigma}^2$. 

\begin{definition}
A scalar valued random variable $\xi\in L_{ip}(\Omega_T)$ is called $G$-normal distributed, i.e., $\xi\sim \mathcal{N}(0,[\underline{\sigma}^2,\bar{\sigma}^2])$, if for each $\varphi\in C_{l,Lip}(\mathbb{R})$, $u(t,x):=\hat{\mathbb{E}}[\varphi(x+\sqrt{t}\xi)]$ is a viscosity solution to the following PDE :
\begin{displaymath}
\begin{cases}
\partial_t u-G(\partial_x^2 u)=0,  &(t,x)\in\mathbb{R}^+\times\mathbb{R};\\
u(0,x)=\varphi(x), &x\in\mathbb{R}.
\end{cases}
\end{displaymath}
\end{definition}

\begin{definition}
We call a sublinear expectation $\hat{\mathbb{E}}:L_{ip}(\Omega_T)\rightarrow \mathbb{R}$ a $G$-expectation if the canonical process $G$ is a $G$-Brownian motion under $\hat{\mathbb{E}}[\cdot]$, i.e., for each $0\leq s\leq t\leq T$, the increment $B_t-B_s\sim \mathcal{N}(0,[(t-s)\underline{\sigma}^2,(t-s)\bar{\sigma}^2])$ and for all $n>0$, $0\leq t_1\leq \cdots\leq t_n\leq T$ and $\varphi\in C_{l,Lip}(\mathbb{R}^n)$,
\begin{align*}
\hat{\mathbb{E}}[\varphi(B_{t_1},\cdots,B_{t_n-1},B_{t_n}-B_{t_n-1})]=\hat{\mathbb{E}}[\psi(B_{t_1},\cdots,B_{t_n-1})],
\end{align*}
where $\psi(x_1,\cdots,x_{n-1})=\hat{\mathbb{E}}[\varphi(x_{t_1},\cdots,x_{t_n-1},B_{t_n}-B_{t_n-1})]$.  The triple $(\Omega,L_{ip}(\Omega_T),\hat{\mathbb{E}})$ is called the $G$-expectation space.
\end{definition}


Define $\Vert\xi\Vert_{L_{G}^{p}}:=(\hat{\mathbb{E}}[|\xi|^{p}])^{1/p}$ for $\xi\in L_{ip}(\Omega_T)$ and $p\geq1$.   The completion of $L_{ip} (\Omega_T)$ under this norm  is denote by $L_{G}^{p}(\Omega_T)$. We may check that $\hat{\mathbb{E}}[\cdot]$ is a continuous mapping on $L_{ip}(\Omega_T)$ w.r.t the norm $\|\cdot\|_{L_G^1}$. Hence, the  $G$-expectation $\mathbb{\hat{E}}[\cdot]$ can be extended continuously to the completion $L_{G}^{1}(\Omega_T)$. Denis, Hu and Peng \cite{DHP11} prove that the $G$-expectation has the following representation.
\begin{theorem}[\cite{DHP11}]
	\label{the1.1}  There exists a weakly compact set
	$\mathcal{P}$ of probability
	measures on $(\Omega_T,\mathcal{B}(\Omega_T))$, such that
	\[
	\hat{\mathbb{E}}[\xi]=\sup_{P\in\mathcal{P}}E_{P}[\xi] \text{ for all } \xi\in  {L}_{G}^{1}{(\Omega_T)}.
	\]
	$\mathcal{P}$ is called a set that represents $\hat{\mathbb{E}}$.
\end{theorem}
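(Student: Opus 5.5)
The plan is to build the representing family $\mathcal{P}$ explicitly from controlled stochastic integrals and then transfer the identity from $L_{ip}(\Omega_T)$ to its completion. Fix a probability space $(\Omega^0,\mathcal{F}^0,P^0)$ carrying a standard Brownian motion $W$ with its augmented filtration. Let $\mathcal{A}$ be the set of progressively measurable processes $\theta$ with values in $[\underline{\sigma},\bar{\sigma}]$. For each $\theta\in\mathcal{A}$ set
\[
B^\theta_t=\int_0^t\theta_s\,dW_s,\qquad P_\theta=P^0\circ(B^\theta)^{-1},
\]
and put $\mathcal{P}_0=\{P_\theta:\theta\in\mathcal{A}\}$. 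Take $\mathcal{P}$ to be the weak closure of $\mathcal{P}_0$.

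\textbf{Step 1: the identity on $L_{ip}(\Omega_T)$.} For $\xi=\varphi(B_{t_1},\dots,B_{t_n})$, I would argue by induction on $n$ via the dynamic programming principle, so it suffices to treat one increment. By the definition of $G$-normality, $u(t,x)=\hat{\mathbb{E}}[\varphi(x+B_T-B_{T-t})]$ is the viscosity solution of $\partial_tu-G(\partial_x^2u)=0$. Moreover
\[
G(a)=\sup_{\gamma\in[\underline{\sigma},\bar{\sigma}]}\tfrac12\gamma^2a ,
\]
so this equation is the HJB equation of the stochastic control problem $\sup_\theta E_{P^0}[\varphi(x+\int\theta\,dW)]$. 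Verification (or uniqueness of viscosity solutions together with the DPP for the control problem) then gives $\hat{\mathbb{E}}[\xi]=\sup_{\theta}E_{P_\theta}[\xi]$. The independent-increment structure in the definition of $G$-expectation matches the conditioning step of the DPP, which handles the induction.

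\textbf{Step 2: tightness and the closure.} By the BDG inequality, $E_{P_\theta}[|B_t-B_s|^4]\le C\bar\sigma^4|t-s|^2$ uniformly in $\theta$. Kolmogorov's criterion then shows $\mathcal{P}_0$ is tight, so $\mathcal{P}$ is weakly compact. Replacing $\mathcal{P}_0$ by $\mathcal{P}$ does not change the supremum on $L_{ip}(\Omega_T)$: each such $\xi$ is continuous on $\Omega_T$ with polynomial growth, and uniform moment bounds give uniform integrability, so $P\mapsto E_P[\xi]$ is weakly continuous.

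\textbf{Step 3: extension to $L_G^1$.} Both $\hat{\mathbb{E}}$ and $\xi\mapsto\sup_{P\in\mathcal{P}}E_P[\xi]$ are $1$-Lipschitz with respect to $\|\cdot\|_{L^1_G}$, since $|\sup_P E_P[\xi]-\sup_P E_P[\eta]|\le\sup_P E_P|\xi-\eta|=\hat{\mathbb{E}}|\xi-\eta|$. Note that $\xi\in L^1_G$ is defined $\mathcal{P}$-q.s. as the limit of a Cauchy sequence. By density the identity therefore extends to all of $L^1_G(\Omega_T)$.

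\textbf{Main obstacle.} The hard step is Step 1: establishing the DPP for the control problem rigorously and identifying its value function with the viscosity solution. That identification needs a comparison principle for $\partial_tu-G(\partial_x^2u)=0$ with polynomial-growth data. My first attempt would be to mollify $\varphi$, using the regularity of $u$ (Krylov estimates, since $G$ is uniformly elliptic when $\underline\sigma>0$) to run a direct Itô verification argument, and then pass to the limit. When $\underline{\sigma}=0$, I would perturb $\underline\sigma$ to $\underline\sigma+\varepsilon$ and let $\varepsilon\to0$.
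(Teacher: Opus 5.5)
Your proposal is correct and is essentially the stochastic-control proof of Denis, Hu and Peng \cite{DHP11}, which the paper cites rather than reproves. In that proof $\mathcal{P}$ is the weak closure of the laws of $\int_0^\cdot\theta_s\,dW_s$ with $\theta$ valued in $[\underline{\sigma},\bar{\sigma}]$; the identity on $L_{ip}(\Omega_T)$ comes from the dynamic programming principle plus uniqueness for the $G$-heat equation, weak compactness comes from the uniform bound $E_{P_\theta}[|B_t-B_s|^4]\le C|t-s|^2$, and the extension to $L_G^1(\Omega_T)$ uses density together with the quasi-sure identification of limits of Cauchy sequences, all exactly as in your outline.
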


Let $\mathcal{P}$ be a weakly compact set that represents $\hat{\mathbb{E}}$.
For this $\mathcal{P}$, we define the capacity%
\[
c(A):=\sup_{P\in\mathcal{P}}P(A),\ A\in\mathcal{B}(\Omega_T).
\]
A set $A\in\mathcal{B}(\Omega_T)$ is called polar if $c(A)=0$.  A
property holds $``quasi$-$surely"$ (q.s.) if it holds outside a
polar set. In the following, we do not distinguish two random variables $X$ and $Y$ if $X=Y$, q.s.

\begin{definition}
	\label{def2.6} Let $M_{G}^{0}(0,T)$ be the collection of processes in the
	following form: for a given partition $\{t_{0},\cdot\cdot\cdot,t_{N}\}=\pi
	_{T}$ of $[0,T]$,
	\[
	\eta_{t}(\omega)=\sum_{j=0}^{N-1}\xi_{j}(\omega)\mathbf{1}_{[t_{j},t_{j+1})}(t),
	\]
	where $\xi_{i}\in L_{ip}(\Omega_{t_{i}})$, $i=0,1,2,\cdot\cdot\cdot,N-1$. For each
	$p\geq1$ and $\eta\in M_G^0(0,T)$, let $\|\eta\|_{H_G^p}:=\{\hat{\mathbb{E}}[(\int_0^T|\eta_s|^2ds)^{p/2}]\}^{1/p}$, $\Vert\eta\Vert_{M_{G}^{p}}:=(\mathbb{\hat{E}}[\int_{0}^{T}|\eta_{s}|^{p}ds])^{1/p}$ and denote by $H_G^p(0,T)$,  $M_{G}^{p}(0,T)$ the completion
	of $M_{G}^{0}(0,T)$ under the norm $\|\cdot\|_{H_G^p}$, $\|\cdot\|_{M_G^p}$, respectively.
\end{definition}

We give the characterization of the space $L_G^p(\Omega_T)$ and $M_G^p(0,T)$ for each $p\geq 1$, respectively. To this end, set $\mathcal{F}_t=\mathcal{B}(\Omega_t)$ for $t\in[0,T]$ and the distance
\begin{displaymath}
\rho((t,\omega),(t',\omega'))=|t-t'|+\max_{s\in[0,T]}|\omega_s-\omega'_s|, \textrm{ for } (t,\omega), (t',\omega')\in [0,T]\times \Omega_T.
\end{displaymath}
We define
\begin{itemize}
\item $L^0(\Omega_T)$: the space of all $\mathcal{B}(\Omega_T)$-measurable functions;
\item $\mathbb{L}^p(\Omega_T):=\{X\in L^0(\Omega_T):\sup_{P\in\mathcal{P}}E_P[|X|^p]<\infty\}$;
\item $\mathbb{M}^p(0,T):=\{\eta: \textrm{progressively measurable on } [0,T]\times \Omega_T \textrm{ and } \hat{\mathbb{E}}\left[\int_0^T |\eta_t|^p dt\right]<\infty\}$;
\item $\hat{c}(A):=\frac{1}{T}\hat{\mathbb{E}}\left[\int_0^T I_A(t,\omega) dt\right], \textrm{ for each progressively measurable set } A\subset [0,T]\times \Omega_T$.
\end{itemize}


\begin{definition}
(i) A mapping $X:\Omega_T\rightarrow \mathbb{R}$ is said to be quasi-continuous if, for each $\varepsilon>0$, there exists an open set $O$ in $\Omega_T$ such that ${c}(G)<\varepsilon$ and $X|_{O^c}$ is continuous.

\noindent (ii) We say that $X:\Omega_T\rightarrow \mathbb{R}$ has a quasi-continuous version if there exists a quasi-continuous function $Y:\Omega_T\rightarrow \mathbb{R}$ such that $X=Y$, q.s.
\end{definition}

\begin{theorem}[\cite{DHP11}]\label{LGp}
For each $p\geq 1$, we have
\begin{displaymath}
L_G^p(\Omega_T):=\left\{X\in \mathbb{L}^p(\Omega_T): X \textrm{ has a quasi-continuous version and } \lim_{N\rightarrow\infty}\hat{\mathbb{E}}\left[ |X|^pI_{\{|X|\geq N\}} dt\right]=0\right\}.
\end{displaymath}
\end{theorem}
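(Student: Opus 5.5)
The statement is an equality of two sets, so I would prove the two inclusions separately. The right-hand side I write as
\[
\mathcal{L}^p:=\{X\in \mathbb{L}^p(\Omega_T):\ X \textrm{ has a quasi-continuous version and } \hat{\mathbb{E}}[|X|^pI_{\{|X|\geq N\}}]\to 0\}.
\]
Throughout I use the representation of Theorem \ref{the1.1}: $\hat{\mathbb{E}}=\sup_{P\in\mathcal{P}}E_P$ with $\mathcal{P}$ weakly compact. This gives three facts. First, $\mathcal{P}$ is tight. Second, $c$ satisfies the Markov inequality $c(|X|>a)\leq a^{-p}\hat{\mathbb{E}}[|X|^p]$. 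Third, $c$ is countably subadditive. The (evident) typo ``$dt$'' in the uniform integrability condition is ignored.

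\textbf{Inclusion $L_G^p(\Omega_T)\subset\mathcal{L}^p$.}
\begin{itemize}
\item For $X=\varphi(B_{t_1},\dots,B_{t_n})\in L_{ip}(\Omega_T)$, $X$ is a continuous function of $\omega$, so it is trivially quasi-continuous.
\item Since $\varphi$ has polynomial growth and $G$-Brownian motion has finite moments of all orders, we have $\hat{\mathbb{E}}[|X|^{p+1}]<\infty$. Hence
\[
\hat{\mathbb{E}}[|X|^pI_{\{|X|\geq N\}}]\leq N^{-1}\hat{\mathbb{E}}[|X|^{p+1}]\to0 .
\]
\item For a general $X\in L_G^p$, take $X_n\in L_{ip}$ with $\hat{\mathbb{E}}[|X_n-X|^p]\leq 2^{-2np}$. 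The Markov inequality gives $c(|X_{n+1}-X_n|>2^{-n})\lesssim 2^{-np}$.
\item By Borel--Cantelli for the subadditive capacity, $X_n$ converges uniformly outside open sets of arbitrarily small capacity. This needs the sets $\{|X_{n+1}-X_n|>2^{-n}\}$ to be open, which holds because $X_n$ is continuous.
\item The limit is therefore continuous off an open set of small capacity, i.e. it is a quasi-continuous version of $X$.
\item Uniform integrability passes to the limit through
\[
|X|^pI_{\{|X|\geq N\}}\leq 2^{p-1}\bigl(|X-X_n|^p+|X_n|^pI_{\{|X_n|\geq N/2\}}\bigr)+2^{p-1}|X_n|^pI_{\{|X-X_n|\geq N/2\}},
\]
where the last term is again controlled by $|X-X_n|$ and by the uniform integrability of the single variable $X_n$.
\end{itemize}

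\textbf{Inclusion $\mathcal{L}^p\subset L_G^p(\Omega_T)$.} I proceed by three successive reductions.
\begin{enumerate}
\item \emph{Truncation.} Set $X^N=(X\wedge N)\vee(-N)$. Then $\hat{\mathbb{E}}[|X-X^N|^p]\leq\hat{\mathbb{E}}[|X|^pI_{\{|X|\geq N\}}]\to0$, and $X^N$ is still quasi-continuous. So it suffices to treat bounded quasi-continuous $X$.
\item \emph{Bounded quasi-continuous to bounded continuous.} Take an open $O$ with $c(O)<\varepsilon$ such that $X|_{O^c}$ is continuous. By Tietze's theorem on the metric space $\Omega_T$, extend $X|_{O^c}$ to a continuous $Y$ with $|Y|\leq N$. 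Then $\hat{\mathbb{E}}[|X-Y|^p]\leq(2N)^pc(O)\leq (2N)^p\varepsilon$.
\item \emph{Bounded continuous to $L_{ip}$.} Let $Y$ be bounded and continuous, and let $\pi_n\omega$ be the piecewise-linear interpolation of $\omega$ on a dyadic grid. Then $Y(\pi_n\omega)$ is a bounded continuous function of finitely many $B_{t_i}$; a further Lipschitz approximation puts it in $L_{ip}$, and that step is harmless.
\item \emph{Convergence on a compact set.} By tightness of $\mathcal{P}$, choose a compact $K\subset\Omega_T$ with $c(K^c)<\varepsilon$. By Arzel\`a--Ascoli, $K$ is equicontinuous, so $\pi_n\omega\to\omega$ uniformly on $K$. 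Moreover, $K\cup\bigcup_n\pi_n K$ is relatively compact: the interpolations share the modulus of continuity of $K$. On this set $Y$ is uniformly continuous, so $Y\circ\pi_n\to Y$ uniformly on $K$.
\item \emph{Conclusion.} Combining, $\hat{\mathbb{E}}[|Y\circ\pi_n-Y|^p]\leq \sup_K|Y\circ\pi_n-Y|^p+(2\sup|Y|)^p\varepsilon$. Hence $Y\in L_G^p$, and therefore $X\in L_G^p$.
\end{enumerate}

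\textbf{Main obstacle.} The delicate points are capacity-theoretic. In the first inclusion, one must get \emph{open} exceptional sets in the quasi-sure convergence argument. In the second, one needs the compact-set approximation on a non-locally-compact path space. I would handle both using the weak compactness of $\mathcal{P}$ (tightness, and hence regularity of $c$), as in \cite{DHP11}.
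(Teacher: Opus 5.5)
Your proposal is correct and is essentially the standard proof from \cite{DHP11}, which the paper only cites. In one direction it uses a Borel--Cantelli argument over open exceptional sets plus a splitting estimate for uniform integrability; in the other it uses truncation, Tietze extension from $O^c$, and tightness of $\mathcal{P}$ to approximate $C_b(\Omega_T)$ by cylinder functionals. The one point you leave tacit is that $L_G^p(\Omega_T)$ is realized as the closure of $L_{ip}(\Omega_T)$ inside the complete space $\mathbb{L}^p(\Omega_T)$ (normed by $(\sup_{P\in\mathcal{P}}E_P[|\cdot|^p])^{1/p}$, modulo q.s.\ equality), which both your phrase ``quasi-continuous version of $X$'' and your reductions in the second inclusion rely on; \cite{DHP11} make this explicit by working with the closure $\mathbb{L}^p_c$ of $C_b(\Omega_T)$ and then identifying it with $L_G^p(\Omega_T)$.
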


\begin{definition}
(i) A progressively measurable process $\eta:[0,T]\times \Omega_T\rightarrow \mathbb{R}$ is called quasi-continuous (q.c. for short), if for each $\varepsilon>0$, there exists a progressively measurable open set $G$ in $[0,T]\times \Omega_T$ such that $\hat{c}(G)<\varepsilon$ and $\eta|_{G^c}$ is continuous.

\noindent (ii) We say that a progressively measurable process $\eta:[0,T]\times \Omega_T\rightarrow \mathbb{R}$ has a quasi-continuous version if there exists a quasi-continuous process $\eta'$ such that $\hat{c}(\{\eta\neq \eta'\})=0$.
\end{definition}

\begin{theorem}[\cite{HWZ}]\label{thm4.7}
For each $p\geq 1$, we have
\begin{displaymath}
M_G^p(0,T):=\left\{\eta\in \mathbb{M}^p(0,T): \eta \textrm{ has a quasi-continuous version and } \lim_{N\rightarrow\infty}\hat{\mathbb{E}}\left[\int_0^T |\eta_t|^pI_{\{|\eta_t|\geq N\}} dt\right]=0\right\}.
\end{displaymath}
\end{theorem}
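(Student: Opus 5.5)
The characterization of $M_G^p(0,T)$ (Theorem \ref{thm4.7}) is proved by two inclusions, in the spirit of the proof of Theorem \ref{LGp} in \cite{DHP11}. We work with the product structure $[0,T]\times\Omega_T$, the metric $\rho$ and the capacity $\hat c$. Write $\mathcal{M}$ for the set on the right-hand side. The first task is to show $M_G^p(0,T)\subset\mathcal{M}$, the second to show $\mathcal{M}\subset M_G^p(0,T)$.

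\emph{Step 1: $M_G^p\subset\mathcal{M}$.} Each simple process $\eta\in M_G^0(0,T)$ is progressively measurable and satisfies the uniform integrability condition, since $\hat{\mathbb{E}}[\int_0^T|\eta_t|^pI_{\{|\eta_t|\ge N\}}dt]\le N^{-1}\hat{\mathbb{E}}[\int_0^T|\eta_t|^{p+1}dt]$ with $\xi_j\in L_{ip}$. It is also quasi-continuous: $\xi_j$ is continuous in $\omega$, and the jumps in $t$ occur only on the sets $(t_j-\delta,t_j+\delta)\times\Omega_T$, which are progressively measurable, open, and have $\hat c$-measure at most $2\delta N/T$. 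For a general $\eta\in M_G^p$, take simple $\eta^n$ with $\|\eta^n-\eta\|_{M_G^p}\le 2^{-2n}$. Markov's inequality gives $\hat c(\{|\eta^{n+1}-\eta^n|>2^{-n}\})\le C2^{-n}$. A Borel--Cantelli argument on the open enlargements of these exceptional sets then yields a progressively measurable open set of small $\hat c$-measure outside of which $\eta^n$ converges uniformly. The limit is a quasi-continuous version of $\eta$. Uniform integrability passes to the limit through $|\eta|^pI_{\{|\eta|\ge N\}}\le 2^{p-1}(|\eta-\eta^n|^p+|\eta^n|^pI_{\{|\eta|\ge N\}})$ and the inclusion $\{|\eta|\ge N\}\subset\{|\eta^n|\ge N/2\}\cup\{|\eta-\eta^n|\ge N/2\}$.

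\emph{Step 2: $\mathcal{M}\subset M_G^p$.} Fix $\eta\in\mathcal{M}$, taken quasi-continuous. By the uniform integrability we may truncate: $\eta^N:=(\eta\wedge N)\vee(-N)$ satisfies $\|\eta-\eta^N\|_{M_G^p}\to0$ and is still q.c. So we may assume $\eta$ is bounded. Given $\varepsilon>0$, choose a progressively measurable open set $G$ with $\hat c(G)<\varepsilon$ such that $\eta|_{G^c}$ is continuous. Extend $\eta|_{G^c}$ by a Tietze-type extension to a bounded continuous $\tilde\eta$ on $[0,T]\times\Omega_T$. This extension must remain progressively measurable, i.e.\ nonanticipative. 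To achieve this, first replace $\eta$ by $\eta_{(t-\delta)\vee0}$ (a shift in time), then apply the extension in a way that respects the filtration, for instance via the Dugundji formula on each $\Omega_t$ using the stopped path $\omega_{\cdot\wedge t}$. Then $\|\eta-\tilde\eta\|_{M_G^p}^p\le (2N)^pT\hat c(G)\le C\varepsilon$. It remains to show that a bounded continuous adapted process belongs to $M_G^p$. By weak compactness of $\mathcal P$ (Theorem \ref{the1.1}), tightness gives a compact $K\subset\Omega_T$ with $c(K^c)$ small. On $[0,T]\times K$ the process $\tilde\eta$ is uniformly continuous. Approximate it by $\sum_j\tilde\eta_{t_j}(\omega)\mathbf 1_{[t_j,t_{j+1})}$, and note each $\tilde\eta_{t_j}$ is a bounded continuous function on $\Omega_{t_j}$, hence belongs to $L_G^p(\Omega_{t_j})$ by Theorem \ref{LGp}. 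Since $L_{ip}(\Omega_{t_j})$ is dense in $L_G^p(\Omega_{t_j})$, these coefficients can then be replaced by elements of $L_{ip}(\Omega_{t_j})$.

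\emph{Main obstacle.} The delicate point is the continuous extension in Step 2 that preserves progressive measurability and stays compatible with the time-adapted open sets. I would handle this by working with the stopped-path metric, $\rho$ restricted to $\{(t,\omega_{\cdot\wedge t})\}$, and by using the Dugundji extension, whose formula depends only on distances to $G^c$ and is therefore adapted when $G$ is progressively measurable. Step 1's Borel--Cantelli argument is routine by comparison.
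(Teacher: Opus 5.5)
The paper gives no proof of Theorem \ref{thm4.7}; it quotes the result from \cite{HWZ}, so there is no in-paper argument to compare yours against. Judged on its own, your two-inclusion scheme, modelled on the proof of Theorem \ref{LGp}, is sound, and you have correctly identified the one genuinely delicate point. Step 1 goes through as you describe, with one clarification. The ``open enlargement'' of $\{|\eta^{n+1}-\eta^n|>2^{-n}\}$ should be its union with the deterministic time strips around the partition points. That union is open, because the simple process is continuous off the strips, and it is progressively measurable. You also need countable subadditivity of $\hat c$.

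In Step 2, the mechanism that actually makes the extension work is the retraction $\theta(t,\omega):=(t,\omega_{\cdot\wedge t})$, and you should let it do all the work.
\begin{itemize}
\item[(i)] $\theta$ is $\rho$-continuous and $\Lambda:=\theta([0,T]\times\Omega_T)$ is closed.
\item[(ii)] Progressive measurability of $G$ and $\eta$ (with $\mathcal{F}_t=\sigma(\omega_{\cdot\wedge t})$) gives $\theta(G^c)\subset G^c$ and $\eta=\eta\circ\theta$.
\item[(iii)] Plain Tietze applied to $\eta$ on the closed subset $\Lambda\cap G^c$ of the metric space $\Lambda$ gives $\bar\eta$. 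Then $\tilde\eta:=\bar\eta\circ\theta$ is jointly continuous, adapted (hence progressive), bounded by $N$, and equal to $\eta$ on $G^c$.
\end{itemize}
State the two identities in (ii) explicitly; they are exactly where progressive measurability of $G$ enters.

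Two ingredients of your write-up should be removed.
\begin{itemize}
\item[(a)] The preliminary time shift $\eta_{(t-\delta)\vee 0}$ is not needed. It also introduces an error $\|\eta-\eta_{(\cdot-\delta)\vee0}\|_{M_G^p}$ that you never control.
\item[(b)] Your justification that the Dugundji formula ``depends only on distances to $G^c$ and is therefore adapted'' is wrong on $[0,T]\times\Omega_T$. The $\rho$-distance involves $\max_{s\in[0,T]}|\omega_s-\omega'_s|$, i.e.\ the future of $\omega$. Adaptedness comes from composing with $\theta$, not from the extension formula.
\end{itemize}
Likewise, if the extension is carried out $t$ by $t$ on each $\Omega_t$, you get an adapted process but lose joint continuity in $(t,\omega)$. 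Your last step needs that joint continuity, since it uses uniform continuity on $[0,T]\times K$ to pass to step processes.

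With these repairs the remainder is correct: tightness of $\mathcal{P}$, the step approximation, Theorem \ref{LGp} applied on horizon $t_j$ to the bounded continuous functional $\tilde\eta_{t_j}$ of $\omega_{\cdot\wedge t_j}$, and density of $L_{ip}(\Omega_{t_j})$.
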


 We denote by $\langle B\rangle$ the quadratic variation process of the $G$-Brownian motion $B$. For two processes $ \xi\in M_{G}^{1}(0,T)$ and $ \eta\in M_{G}^{2}(0,T)$,
the $G$-It\^{o} integrals $(\int^{t}_0\xi_sd\langle
B\rangle_s)_{0\leq t\leq T}$ and $(\int^{t}_0\eta_sdB_s)_{0\leq t\leq T}$ are well defined, see  Li and Peng \cite{lp} and Peng \cite{P19}. The following proposition can be regarded as the Burkholder--Davis--Gundy inequality under $G$-expectation framework
\begin{proposition}[\cite{P19}]\label{the1.3}
	If $\eta\in H_G^{\alpha}(0,T)$ with $\alpha\geq 1$ and $p\in(0,\alpha]$, then we have
	\begin{displaymath}
	\underline{\sigma}^p c_p\hat{\mathbb{E}}\left[\left(\int_0^T |\eta_s|^2ds\right)^{p/2}\right]\leq
	\hat{\mathbb{E}}\left[\sup_{u\in[t,T]}\left|\int_t^u\eta_s dB_s\right|^p\right]\leq
	\bar{\sigma}^p C_p\hat{\mathbb{E}}\left[\left(\int_0^T |\eta_s|^2ds\right)^{p/2}\right],
	\end{displaymath}
	where $0<c_p<C_p<\infty$ are constants depending on $p, T$.
\end{proposition}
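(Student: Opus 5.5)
The statement is the $G$-BDG inequality of Proposition \ref{the1.3}. I would prove it by reducing to the classical BDG inequality under each $P\in\mathcal{P}$ via the representation in Theorem \ref{the1.1}, and then using the bounds $\underline{\sigma}^2 ds\le d\langle B\rangle_s\le \bar{\sigma}^2 ds$, which hold quasi-surely.

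\textbf{Step 1 (simple integrands).} First take $\eta\in M_G^0(0,T)$. Then $\int_t^u \eta_s dB_s$ is a finite sum $\sum_j \xi_j (B_{t_{j+1}\wedge u}-B_{t_j\vee t})$, and it is defined pathwise. Under each $P\in\mathcal{P}$ the canonical process $B$ is a continuous martingale. This holds because in the Denis--Hu--Peng construction $\mathcal{P}$ may be taken as the closure of laws of $\int_0^\cdot \theta_s dW_s$ with $\theta$ adapted and valued in $[\underline{\sigma},\bar{\sigma}]$. Its quadratic variation agrees $P$-a.s. with $\langle B\rangle$ and satisfies $\underline{\sigma}^2 ds\le d\langle B\rangle_s\le\bar{\sigma}^2 ds$. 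The classical BDG inequality under $P$ therefore gives
\[
c_p E_P\Big[\Big(\int_t^T|\eta_s|^2d\langle B\rangle_s\Big)^{p/2}\Big]\le E_P\Big[\sup_{u\in[t,T]}\Big|\int_t^u\eta_sdB_s\Big|^p\Big]\le C_pE_P\Big[\Big(\int_t^T|\eta_s|^2d\langle B\rangle_s\Big)^{p/2}\Big],
\]
for every $p>0$. Replacing $d\langle B\rangle_s$ by $\bar{\sigma}^2ds$ on the right and by $\underline{\sigma}^2ds$ on the left, and then taking $\sup_{P\in\mathcal{P}}$, gives both inequalities with the $\hat{\mathbb{E}}$ expressions. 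On the right I also bound $\int_t^T\le\int_0^T$. The left-hand side as displayed has $\int_0^T$; I read it as $\int_t^T$, and for $t=0$ the two agree.

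\textbf{Step 2 (passage to $H_G^\alpha$).} For general $\eta\in H_G^\alpha(0,T)$, choose $\eta^n\in M_G^0$ with $\|\eta^n-\eta\|_{H_G^\alpha}\to0$. Since $p\le\alpha$, Hölder's inequality under each $P$ shows that $\hat{\mathbb{E}}[(\int_0^T|\eta^n-\eta|^2ds)^{p/2}]\to0$. Applying Step 1 to $\eta^n-\eta^m$ shows that $\sup_u|\int_t^u\eta^n dB|$ is Cauchy in the corresponding $L^p$ sense. The $G$-Itô integral of $\eta$ is defined as this limit, and it coincides $P$-a.s. with the classical stochastic integral under each $P$. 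Passing to the limit on both sides, using the triangle inequality (or the quasi-triangle inequality when $p<1$) for $\hat{\mathbb{E}}[|\cdot|^p]^{1/p}$, then gives the inequality for $\eta$.

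\textbf{Main obstacle.} The only delicate point is identifying the $G$-stochastic integral with the $P$-Itô integral quasi-surely, so that the classical BDG inequality can be applied under each $P$. I would handle this through the Step 2 approximation, since the two integrals agree for step integrands and both are continuous in the respective norms. The constants $c_p,C_p$ are the classical BDG constants, so they are independent of $P$.
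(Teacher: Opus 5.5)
Your proposal is correct and matches the standard argument behind this result, which the paper does not prove itself but only cites from \cite{P19}: classical BDG for step integrands under each $P\in\mathcal{P}$ (where $B$ is a martingale with $\underline{\sigma}^2ds\le d\langle B\rangle_s\le\bar{\sigma}^2ds$), then $\sup_{P\in\mathcal{P}}$, then extension to $H_G^\alpha(0,T)$ by density---and the identification with the $P$-It\^{o} integral that you flag as the main obstacle is not actually needed, since the inequality for step integrands passes directly to the limit. Reading the left-hand side as $\int_t^T$ is in fact necessary, not just a convention: for $t>0$ and $\underline{\sigma}>0$, the integrand $\eta=\mathbf{1}_{[0,t)}$ makes the middle term vanish while the displayed left-hand side equals $\underline{\sigma}^pc_pt^{p/2}>0$.
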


Let $S_G^0(0,T)=\{h(t,B_{t_1\wedge t}, \ldots,B_{t_n\wedge t}):t_1,\ldots,t_n\in[0,T],h\in C_{b,Lip}(\mathbb{R}^{n+1})\}$. For $p\geq 1$ and $\eta\in S_G^0(0,T)$, set $\|\eta\|_{S_G^p}=\{\hat{\mathbb{E}}[\sup_{t\in[0,T]}|\eta_t|^p]\}^{1/p}$. Denote by $S_G^p(0,T)$ the completion of $S_G^0(0,T)$ under the norm $\|\cdot\|_{S_G^p}$.  We have the following continuity property for any $Y\in S_G^p(0,T)$ with $p\geq 1$.

\begin{proposition}[\cite{LPS}]\label{the3.7}
For $Y\in S_G^p(0,T)$ with $p\geq 1$, we have, by setting $Y_s:=Y_T$ for $s>T$,
\begin{displaymath}
\limsup_{\varepsilon\rightarrow0}\hat{\mathbb{E}}\left[\sup_{t\in[0,T]}\sup_{s\in[t,t+\varepsilon]}|Y_t-Y_s|^p\right]=0.
\end{displaymath}
\end{proposition}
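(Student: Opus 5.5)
The plan is to prove the estimate first for the dense class $S_G^0(0,T)$, where everything reduces to the modulus of continuity of $B$, and then pass to general $Y\in S_G^p(0,T)$ by approximation in the norm $\|\cdot\|_{S_G^p}$. For every process below we use the same convention $\eta_s:=\eta_T$ for $s>T$. Set
\begin{displaymath}
\Delta_\varepsilon(Y):=\sup_{t\in[0,T]}\sup_{s\in[t,t+\varepsilon]}|Y_t-Y_s|.
\end{displaymath}

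\emph{Step 1 (reduction to $S_G^0$).} Take $\eta^n\in S_G^0(0,T)$ with $\|Y-\eta^n\|_{S_G^p}\to0$. The extension convention is compatible with the triangle inequality, so
\begin{displaymath}
\Delta_\varepsilon(Y)\le 2\sup_{t\in[0,T]}|Y_t-\eta^n_t|+\Delta_\varepsilon(\eta^n).
\end{displaymath}
By sublinearity of $\hat{\mathbb{E}}$ and $(a+b)^p\le 2^{p-1}(a^p+b^p)$ we get
\begin{displaymath}
\hat{\mathbb{E}}[\Delta_\varepsilon(Y)^p]\le 2^{2p-1}\|Y-\eta^n\|_{S_G^p}^p+2^{p-1}\hat{\mathbb{E}}[\Delta_\varepsilon(\eta^n)^p].
\end{displaymath}
Taking $\limsup_{\varepsilon\to0}$ first and then $n\to\infty$ gives the claim, provided the result holds on $S_G^0(0,T)$.

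\emph{Step 2 (elementary processes).} Let $\eta_t=h(t,B_{t_1\wedge t},\ldots,B_{t_n\wedge t})$, where $h$ is bounded by $M$ and Lipschitz with constant $L$. For $s\in[t,t+\varepsilon]$ (with $s$ replaced by $T$ if $s>T$) we have
\begin{displaymath}
|\eta_t-\eta_s|\le \Big(L\varepsilon+nL\,w_B(\varepsilon)\Big)\wedge 2M,
\end{displaymath}
where $w_B(\varepsilon)=\sup\{|B_u-B_v|:u,v\in[0,T],|u-v|\le\varepsilon\}$. This holds because $|B_{t_i\wedge t}-B_{t_i\wedge s}|\le w_B(\varepsilon)$. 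It therefore suffices to show that $\hat{\mathbb{E}}[w_B(\varepsilon)^p]\to0$.

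\emph{Step 3 (modulus of $B$).} This is the main technical point. Partition $[0,T]$ into intervals $[k\varepsilon,(k+1)\varepsilon]$, $k=0,\ldots,N_\varepsilon-1$, with $N_\varepsilon\approx T/\varepsilon$. Any two points at distance at most $\varepsilon$ lie in two adjacent intervals, so
\begin{displaymath}
w_B(\varepsilon)\le 3\max_k \xi_k,\qquad \xi_k:=\sup_{u\in[k\varepsilon,(k+2)\varepsilon]}|B_u-B_{k\varepsilon}|.
\end{displaymath}
Fix $q>\max(p,2)$. Proposition \ref{the1.3}, applied with $\eta\equiv1$ and starting time $k\varepsilon$, gives $\hat{\mathbb{E}}[\xi_k^q]\le C_q\bar\sigma^q(2\varepsilon)^{q/2}$. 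Using $\max_k\xi_k^q\le\sum_k\xi_k^q$, subadditivity of $\hat{\mathbb{E}}$, and Jensen's inequality under each $P\in\mathcal{P}$ (via Theorem \ref{the1.1}), we obtain
\begin{displaymath}
\hat{\mathbb{E}}[w_B(\varepsilon)^p]\le 3^p\Big(\sum_k\hat{\mathbb{E}}[\xi_k^q]\Big)^{p/q}\le C\big(\varepsilon^{-1}\varepsilon^{q/2}\big)^{p/q}\to0.
\end{displaymath}
The right-hand side tends to zero because $q/2>1$. Combined with Step 2, this yields the limit for $\eta\in S_G^0(0,T)$, and Step 1 then finishes the proof.
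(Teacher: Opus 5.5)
The paper gives no proof of this statement; it quotes it from \cite{LPS}, so there is no in-text argument to compare against. On its own terms your proof is correct. Step~1 (density of $S_G^0(0,T)$), Step~2 (the Lipschitz reduction of an elementary process to the modulus $w_B(\varepsilon)$) and Step~3 (a union bound over $O(1/\varepsilon)$ blocks with a moment of order $q>\max(p,2)$) together give the claim. Step~3 also yields an explicit rate, $\hat{\mathbb{E}}[w_B(\varepsilon)^p]\le C\varepsilon^{p(1/2-1/q)}$, using only the BDG inequality, subadditivity and Jensen under each $P\in\mathcal{P}$.

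A shorter but rate-free alternative for Step~3 is available. The quantity $w_B(\varepsilon)^p$ is a continuous function on $\Omega_T$, dominated by $(2\sup_{u\le T}|B_u|)^p$, and it decreases to $0$ for every $\omega$ as $\varepsilon\downarrow 0$. Hence it lies in $L_G^1(\Omega_T)$, and the downward monotone convergence theorem gives $\hat{\mathbb{E}}[w_B(\varepsilon)^p]\downarrow 0$. This is Theorem 6.1.35 of \cite{P19}, which the paper uses in the remark after Theorem \ref{Gito}. Your version avoids that piece of capacity theory and gives a rate.

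One slip needs fixing. Proposition \ref{the1.3}, as stated here, has $\int_0^T|\eta_s|^2ds$ on the right-hand side and the supremum over $u\in[t,T]$. Plugging in $\eta\equiv 1$ literally therefore gives $T^{q/2}$, not $(2\varepsilon)^{q/2}$. Take instead $\eta=I_{[k\varepsilon,((k+2)\varepsilon)\wedge T)}\in M_G^0(0,T)$ and $t=k\varepsilon$. Then $\sup_{u\in[k\varepsilon,T]}|\int_{k\varepsilon}^u\eta_s\,dB_s|=\xi_k$, with each block intersected with $[0,T]$ since $B$ is only defined there. The bound $\hat{\mathbb{E}}[\xi_k^q]\le C_q\bar{\sigma}^q(2\varepsilon)^{q/2}$ then holds with a constant independent of $k$ and $\varepsilon$, which also sidesteps the $T$-dependence of $C_q$.

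Two minor remarks. The constant $3$ in $w_B(\varepsilon)\le 3\max_k\xi_k$ can be $2$, since two points at distance at most $\varepsilon$ both lie in a single block $[k\varepsilon,(k+2)\varepsilon]$. Step~1 silently uses that elements of $S_G^p(0,T)$ have q.s.\ continuous versions. This makes $\Delta_\varepsilon(Y)$ a well-defined random variable and gives $\|Y-\eta^n\|_{S_G^p}^p=\hat{\mathbb{E}}[\sup_{t}|Y_t-\eta^n_t|^p]$; both facts are standard.
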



\subsection{Stochastic calculus with respect to an increasing process}

In this subsection, we introduce some notions and definitions of stochastic calculus with respect to an increasing process in \cite{Lin}.

\begin{definition}
	We denote by $M_c(0,T)$ the collection of all q.s. continuous processes $X$ whose paths $X_\cdot(\omega):[0,T]\rightarrow \mathbb{R}$ are continuous outside a polar set $A$ and by $M_I(0,T)$ the collection of all q.s. increasing processes $K\in M_c(0,T)$ whose paths $K_\cdot(\omega):[0,T]\rightarrow \mathbb{R}$ are increasing outside a polar set $A$.
	\end{definition}
	
\begin{definition}\label{deflin}
We define, for a fixed $X\in M_c(0,T)$, the stochastic integral with respect to a given $K\in M_I(0,T)$ by
\begin{displaymath}
(\int_0^T X_t dK_t)(\omega)=
\begin{cases}
\int_0^T X_t(\omega)dK_t(\omega), &\omega \in A^c;\\
0, &\omega\in A,
\end{cases}
\end{displaymath}
where $A$ is a polar set and on the complementary of which, $X_\cdot(\omega)$ is continuous and $K_\cdot(\omega)$ is continuous and increasing.
\end{definition}

As shown in \cite{Lin}, the integral $\int_0^T X_t dK_t\in L^0(\Omega_T)$ is well defined for any fixed $X\in M_c(0,T)$ and $K\in M_I(0,T)$. Unfortunately, this does not ensure the quasi-continuity of $\int_0^T X_t dK_t$.  Recalling the characterization  in Theorem \ref{LGp}, we cannot conclude that $\int_0^T X_t dK_t$ belong to $L_G^1(\Omega_T)$ when merely assuming $X\in M_c(0,T)$ and $K\in M_I(0,T)$.  Actually, Proposition 3.12 in \cite{Lin} gives an answer to the natural question that under what condition, $\int_0^T X_t dK_t$ belongs to $L_G^1(\Omega_T)$. However, the conditions are somewhat restrictive since the integrand $X$ is required to be a $G$-It\^{o} process. We provide another answer to this question,  which can be found in the Appendix. In the following, we provide an extension of $G$-It\^{o}'s formula.

\begin{theorem}[\cite{Lin}]\label{Gito}
Let $\Phi\in C^2(\mathbb{R})$ be a real-valued function such that $\frac{d^2\Phi}{dx^2}$ satisfies the polynomial growth condition. For each $0\leq s\leq t\leq T$, consider the process $X$ with dynamics
\begin{align}\label{dynamic}
X_t=X_s+\int_s^t f_u du+\int_s^t h_ud\langle B\rangle_u+\int_s^t g_u dB_u+K_t-K_s,
\end{align}
where $f,h,g$ are bounded processes in $M_G^2(0,T)$ and $K\in M_I(0,T)\cap M_G^2(0,T)$ satisfies 
\begin{align}\label{lpestimate}
\hat{\mathbb{E}}[K_T^p]<\infty \textrm{ for any } p>2
\end{align}
and for each $t\in[0,T]$, 
\begin{align}\label{continuityproperty}
\lim_{s\rightarrow t}\hat{\mathbb{E}}[|K_t-K_s|^2]=0.
\end{align}
Then, we have
\begin{align*}
\Phi(X_t)-\Phi(X_s)=&\int_s^t \frac{d\Phi}{dx} (X_u)f_u du+\int_s^t \frac{d\Phi}{dx} (X_u)h_u d\langle B\rangle_u+\int_s^t \frac{d\Phi}{dx} (X_u)g_u dB_u\\
&+\int_s^t \frac{d\Phi}{dx} (X_u)dK_u+\frac{1}{2}\int_s^t \frac{d^2\Phi}{dx^2} (X_u)g^2_u d\langle B\rangle_u.
\end{align*}
\end{theorem}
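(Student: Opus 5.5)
The plan is to reduce the formula to the classical pathwise Itô formula under each $P\in\mathcal{P}$, and then pass to the quasi-sure statement. Under each $P\in\mathcal{P}$ from Theorem \ref{the1.1}, the canonical process $B$ is a continuous $P$-martingale with quadratic variation $\langle B\rangle$. The $G$-Itô integrals $\int g\,dB$ and $\int h\,d\langle B\rangle$ coincide $P$-a.s. with the classical stochastic integrals, since they agree on $M_G^0(0,T)$ and both are continuous limits. Hence, $P$-a.s., $X$ in \eqref{dynamic} is a continuous semimartingale with finite-variation part $\int f\,du+\int h\,d\langle B\rangle+K$. The classical Itô formula for $\Phi\in C^2$ then gives the identity $P$-a.s. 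Here the term $\int_s^t\Phi'(X_u)\,dK_u$ is exactly the pathwise Lebesgue--Stieltjes integral of Definition \ref{deflin}, because $X$ is continuous and $K$ is continuous and increasing outside a polar set.

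The real work is to show that every term on the right is a well-defined object in the $G$-framework, and that the identity holds q.s. rather than just $P$-a.s. for each fixed $P$. I would proceed in the following order.

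\textbf{Step 1: integrability of $X$.} Using the boundedness of $f,h,g$, Proposition \ref{the1.3} and \eqref{lpestimate}, I will show $\hat{\mathbb{E}}[\sup_u|X_u|^p]<\infty$ for all $p$. Combined with the polynomial growth of $\Phi''$, this gives the integrability needed for $\Phi'(X)g$, $\Phi'(X)f$ and $\Phi''(X)g^2$.

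\textbf{Step 2: the integrands lie in $M_G^2(0,T)$.} I need these integrands in $M_G^2$ so that the $G$-integrals exist. For this I use the characterization in Theorem \ref{thm4.7}. The integrand $X$ has a quasi-continuous version: the $G$-integrals are continuous processes in $S_G^2$, and $K\in M_G^2$, so $X\in M_G^2$. This step uses \eqref{continuityproperty} to control $K$ by step-process approximations $\sum K_{t_j}\mathbf{1}_{[t_j,t_{j+1})}$, which converge to $K$ in $M_G^2$ thanks to the continuity in mean-square and dominated convergence via \eqref{lpestimate}. Composing with the continuous function $\Phi'$ preserves quasi-continuity, and the uniform integrability condition in Theorem \ref{thm4.7} follows from the higher moments obtained in Step 1.

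\textbf{Step 3: the Itô identity holds q.s.} Each $G$-integral agrees $P$-a.s. with its classical counterpart for each $P\in\mathcal{P}$. The set where the two sides differ is therefore $P$-null for every $P$, hence polar, and the identity holds q.s.

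I expect Step 3 to be the main obstacle: verifying that the $G$-integral coincides with the $P$-classical integral for integrands that are merely in $M_G^2$ and not in $M_G^0$. I would first handle this by approximating with $\eta^n\in M_G^0$. Convergence in $M_G^2$ gives convergence in $L^2(P)$ uniformly over $P$, by the upper bound in Proposition \ref{the1.3}. Along a subsequence, both integrals then converge $P$-a.s., so the limits agree.

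Alternatively, one could localize by replacing $\Phi$ with $C^2$ functions that have bounded derivatives. One would then truncate via $\Phi_N$ agreeing with $\Phi$ on $[-N,N]$ and let $N\to\infty$ using $c(\sup|X|>N)\to0$, which follows from the moment estimates of Step 1.
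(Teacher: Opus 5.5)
Your argument is correct, but it differs from what stands behind the statement. The paper gives no proof and imports the result from \cite{Lin}. Lin's hypotheses are tailored to an argument carried out entirely inside the $G$-framework: partition $[s,t]$, Taylor-expand $\Phi(X_{t_{k+1}})-\Phi(X_{t_k})$, and prove $L^p_G$-convergence of each Riemann sum. In that scheme the mean-square continuity \eqref{continuityproperty} is what makes $\sum_k\Phi'(X_{t_k})\mathbf{1}_{[t_k,t_{k+1})}$ approximate $\Phi'(X)$. The moment bound \eqref{lpestimate} (compare the count $2(k+3)$ quoted in part (iii) of the remark after the theorem) absorbs the H\"older estimates for the $\Delta K$ and $(\Delta K)^2$ terms.

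You instead apply the classical It\^o formula under each $P\in\mathcal{P}$ and aggregate. This rests on two facts: $G$-integrals agree $P$-a.s.\ with the classical ones, and Theorem \ref{thm4.7} (which postdates \cite{Lin}) places $\Phi'(X)g$, $\Phi'(X)f$, $\Phi'(X)h$ and $\Phi''(X)g^2$ in the right spaces via quasi-continuity plus uniform integrability.

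What your route buys: it is short, it never uses \eqref{continuityproperty}, and it uses \eqref{lpestimate} only for finitely many moments. So it proves a slightly stronger statement, in line with part (ii) of that remark. In fact your step-process approximation of $K$ in Step 2 is superfluous, since $K\in M_G^2(0,T)$ is a hypothesis. As written it would also need $K_{t_j}\in L_G^2(\Omega_{t_j})$, which is not assumed.

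What it does not give is that $\int_s^t\Phi'(X_u)\,dK_u$ is an $L^1_G$-limit of Riemann sums, i.e.\ a genuine element of $L^1_G(\Omega_T)$. A discretization argument delivers this directly and intrinsically. In your approach it would have to be read off from the identity, which requires $\Phi(X_t)\in L^1_G$. That in turn requires $K_t\in L^2_G(\Omega_t)$, which is exactly what \eqref{continuityproperty} together with $K\in M_G^2$ provides (via $K_t=\lim_{\varepsilon\to0}\frac{1}{\varepsilon}\int_{t-\varepsilon}^t K_u\,du$ in $L^2_G$).

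Two small points you should make explicit. First, every $P\in\mathcal{P}$ makes $B$ a martingale; this follows from $\hat{\mathbb{E}}[\pm\xi(B_t-B_s)]=0$ for bounded $\xi\in L_{ip}(\Omega_s)$. Second, $K$ is adapted to the $P$-completed filtration. This follows from continuity of $K$ and its progressively measurable version in $M_G^2$, via the same time averages, and it makes $\Phi'(X)g$ an admissible classical integrand.
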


\begin{remark}
(i) Suppose that $K\in M_c(0,T)$ is of bounded variation, q.s. Then,  the integral $\int_0^T X_tdK_t$ can be defined similarly as in Definition \ref{deflin}. Besides, similar result as in Theorem \ref{Gito} still holds when $K$ in the dynamic \eqref{dynamic} is replaced by a bounded variation process (see Remark 3.9 and Remark 3.19 in \cite{Lin}).

\noindent (ii) Suppose that $K\in M_I(0,T)$ and $K_t\in L_G^2(\Omega_t)$ for any $t\in[0,T]$. We may drop the condition \eqref{continuityproperty}. In fact, since $|K_t-K_s|\downarrow 0$ as $s$ approaches $t$, by Theorem 6.1.35 in \cite{P19}, we obtain that 
\begin{align*}
\lim_{s\rightarrow t}\hat{\mathbb{E}}[|K_t-K_s|^2]=0.
\end{align*}

\noindent (iii) Suppose that $|\frac{d^2\Phi}{dx^2}(x)|\leq C(1+|x|^k)$, for some $k\geq 1$. Condition \eqref{lpestimate} can be weakened to $\hat{\mathbb{E}}[|K_T|^{2(k+3)}]<\infty$ (see Remark 3.18 in \cite{Lin}). 
\end{remark}

\subsection{Skorokhod problem with two nonlinear constraints}

In this subsection, we study the Skorokhod problem with two nonlinear constraints, where the nonlinear functions are not assumed to be bi-Lipschitz. Let $D[0,\infty)$ be the set of real-valued  c\`{a}dl\`{a}g functions. Similarly, we use $I[0,\infty)$, $C[0,\infty)$, $BV[0,\infty)$ to denote subspaces of $D[0,\infty)$ consisting of nondecreasing functions, continuous functions and functions with bounded variation, respectively. 

\begin{definition}\label{def'}
Let $s\in D[0,\infty)$, $g,h:[0,\infty)\times \mathbb{R}\rightarrow \mathbb{R}$ be two functions with $g\leq h$. A pair of functions $(x,k)\in D[0,\infty)\times BV[0,\infty)$ is called a solution of the Skorokhod problem for $s$ with nonlinear constraints $g,h$ ($(x,k)=\mathbb{SP}_g^h(s)$ for short) if 
\begin{itemize}
\item[(i)] $x_t=s_t+k_t$.
\item[(ii)] $g(t,x_t)\leq 0\leq h(t,x_t)$.
\item[(iii)] $k_{0-}=0$ and $k$ has the decomposition $k=k^h-k^g$, where $k^h,k^g$ are nondecreasing functions satisfying
\begin{align*}
\int_0^\infty I_{\{g(s,x_s)<0\}}dk^g_s=0, \  \int_0^\infty I_{\{h(s,x_s)>0\}}dk^h_s=0.
\end{align*}
\end{itemize}
The pair $(k^g,k^h)$ will be referred to as the constraining processes associated with the Skorokhod problem $\mathbb{SP}_g^h(s)$.
\end{definition}

In order to establish the existence and uniqueness result for the Skorokhod problem $\mathbb{SP}_g^h(s)$, we propose the following assumption on the functions $g,h$.
\begin{assumption}\label{ass}
The functions $g,h:[0,\infty)\times \mathbb{R}\rightarrow \mathbb{R}$ satisfy the following conditions
\begin{itemize}
\item[(i)] For each fixed $x\in\mathbb{R}$, $g(\cdot,x),h(\cdot,x)\in D[0,\infty)$.
\item[(ii)] For any fixed $t\geq 0$, $g(t,\cdot)$, $h(t,\cdot)$ are strictly increasing and $g(t,x)< h(t,x)$ for any $(t,x)\in[0,\infty)\times\mathbb{R}$.
\item[(iii)] For any $t\geq 0$, the inverse $g^{-1}(t,0)$ and $h^{-1}(t,0)$ exist and $g^{-1}(\cdot,0),h^{-1}(\cdot,0)\in D[0,\infty)$. Moreover,  
\begin{align*}
\inf_{t\geq 0}(g^{-1}(t,0)-h^{-1}(t,0))>0.
\end{align*}
\end{itemize}
\end{assumption}

\begin{remark}\label{remark1'}
(1) Compared with Assumption 2.1 made for the functions $g,h$ of the Skorokhod problem in \cite{Li}, Assumption \ref{ass} described above is weaker. More precisely, Assumption 2.1 in \cite{Li} requires (i), (ii) in Assumption \ref{ass} and the following bi-Lipschitz condition and separated condition
\begin{itemize}
\item[(iii')] There exists two positive constants $0<c<C<\infty$, such that for any $t\geq 0$ and $x,y\in \mathbb{R}$,
\begin{align*}
&c|x-y|\leq |g(t,x)-g(t,y)|\leq C|x-y|,\\
&c|x-y|\leq |h(t,x)-h(t,y)|\leq C|x-y|.
\end{align*} 
\item[(iv')] $\inf_{(t,x)\in[0,\infty)\times\mathbb{R}}(h(t,x)-g(t,x))>0$.
\end{itemize}
By Lemma 2.1 in \cite{Li}, under conditions (iii') and (iv'), the functions $g,h$ naturally fulfill condition (iii) in Assumption \ref{ass}.

\noindent (2) The most frequently used $g,h$ take the following form
\begin{align*}
g(t,x)=x-\beta_t, \ h(t,x)=x-\alpha_t,
\end{align*}
where $\alpha,\beta\in D[0,\infty)$ with $\inf_{t\geq 0}(\beta_t-\alpha_t)>0$. In this case, the Skorokhod problem $\mathbb{SP}_g^h(s)$ degenerates to the Skorokhod problem on $[\alpha,\beta]$ for $s$ as in \cite{BKR,S1,S2}. Another example satisfying Assumption \ref{ass} is that 
\begin{align*}
g(t,x)=\rho(x)-\beta_t, \ h(t,x)=\rho(x)-\alpha_t,
\end{align*}
with $\alpha,\beta$ are given as above and 
\begin{align*}
\rho(x)=\begin{cases}
\sqrt{x}, &x\geq0,\\
-\sqrt{-x}, &x<0.
\end{cases}
\end{align*}
However, this example does not satisfy Assumption 2.1 in \cite{Li}.
\end{remark}

Under Assumption \ref{ass}, given $s\in D[0,\infty)$, for any $t\geq 0$, each of the following equation admits a unique solution
\begin{align}
g(t,s_t+x)=0, \ h(t,s_t+x)=0,
\end{align}
which is denoted by $\phi_t^s$, $\psi_t^s$, respectively. For simplicity, we omit the superscript $s$. Actually, we have
\begin{align}\label{phipsi}
\phi_t=g^{-1}(t,0)-s_t, \ \psi_t=h^{-1}(t,0)-s_t.
\end{align}
It follows from Assumption \ref{ass} that $\phi,\psi\in D[0,\infty)$ and 
\begin{align}\label{separated}
		\inf_{t\geq 0} (\phi_t-\psi_t)>0.
\end{align}

Recall that to make sure the proof of the existence and uniqueness for the Skorokhod problem in \cite{Li} valid, we need the c\`{a}dl\`{a}g property and the separated property \eqref{separated} for $\phi,\psi$, the strictly increasing property for the functions $g,h$ and the fact that $g(t,x)< h(t,x)$ for any $(t,x)\in[0,\infty)\times\mathbb{R}$. These properties naturally hold under Assumption \ref{ass}, which indicates the following result.

\begin{theorem}\label{thm1.1}
Suppose that $g,h$ satisfy Assumption \ref{ass}. For any given $s\in D[0,\infty)$, there exists a unique pair of solution to the Skorokhod problem $\mathbb{SP}_g^h(s)$. Moreover, the second component of the solution $k$ has the following representation
\begin{align*}
k_t=\min\left([-\phi^-_0]\vee \sup_{r\in[0,t]}\psi_r,\inf_{s\in[0,t]}\left[\phi_s\vee \sup_{r\in[s,t]}\psi_r\right] \right).
\end{align*}
\end{theorem}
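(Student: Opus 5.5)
The idea is to reduce $\mathbb{SP}_g^h(s)$ to a Skorokhod problem on a time-dependent interval, which is the setting of \cite{BKR,S1,S2}. Since $g(t,\cdot)$ and $h(t,\cdot)$ are strictly increasing, for any pair $(x,k)$ with $x=s+k$ we have
\begin{align*}
g(t,x_t)\le 0 \iff k_t\le \phi_t, \qquad h(t,x_t)\ge 0\iff k_t\ge \psi_t .
\end{align*}
Likewise $\{g(t,x_t)<0\}=\{k_t<\phi_t\}$ and $\{h(t,x_t)>0\}=\{k_t>\psi_t\}$. So $(x,k)=\mathbb{SP}_g^h(s)$ exactly when $k\in BV[0,\infty)$ satisfies the following: $k_{0-}=0$; $\psi_t\le k_t\le\phi_t$; and $k=k^h-k^g$ with $k^h$ increasing only when $k_t=\psi_t$ and $k^g$ increasing only when $k_t=\phi_t$. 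Equivalently, $-k$ solves the two-sided Skorokhod problem for the input $0$ on the moving interval $[-\phi,-\psi]$. By \eqref{phipsi} and Assumption \ref{ass}, $\phi$ and $\psi$ are c\`adl\`ag, and \eqref{separated} gives a uniform gap $\delta:=\inf_t(\phi_t-\psi_t)>0$. This is precisely the structure used in \cite{Li} and in \cite{BKR,S1}.

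\textbf{Uniqueness.} Let $k,\tilde k$ be two solutions and consider $(k_t-\tilde k_t)^2$. Expanding it via the pathwise change-of-variables formula for BV functions, the jump terms have the correct sign. The drift terms are of the form $(k_t-\tilde k_t)\,d(k^h-\tilde k^h-k^g+\tilde k^g)$. Each of these is $\le 0$ by the support conditions: when $k^h$ increases we have $k=\psi\le\tilde k$, and when $k^g$ increases we have $k=\phi\ge\tilde k$. Hence $(k_t-\tilde k_t)^2\le (k_{0}-\tilde k_{0})^2$, and the value at time $0$ is forced by $k_{0-}=0$ together with the minimality condition at $0$. Alternatively, one can cite the uniqueness argument of \cite{Li} directly, since that argument only uses monotonicity of $g,h$, the relation $g<h$, the c\`adl\`ag property and the separation \eqref{separated}.

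\textbf{Existence.} I would define $k$ by the displayed formula and verify (i)--(iii) directly, following \cite{S1} and \cite{BKR}. The steps are:
\begin{enumerate}
\item Show $\psi_t\le k_t\le \phi_t$. The lower bound holds because both arguments of the min dominate $\psi_t$; for the infimum term this uses the choice $s=t$. The upper bound holds because the infimum term evaluated at $s=t$ equals $\phi_t\vee\psi_t=\phi_t$.
\item Show $k$ is c\`adl\`ag. This follows from the fact that $\phi$ and $\psi$ are c\`adl\`ag.
\item Decompose $k=k^h-k^g$ with the correct supports. The natural route is to introduce stopping times $\tau_0=0$ and
\begin{align*}
\sigma_n=\inf\{t\ge\tau_n: k\text{ reaches } \phi\}, \qquad \tau_{n+1}=\inf\{t\ge \sigma_n: k\text{ reaches }\psi\}.
\end{align*}
On intervals where only the lower constraint is active, $k$ coincides with a running supremum of $\psi$, which is increasing; on intervals where only the upper constraint is active, it coincides with a running infimum of $\phi$, which is decreasing.
\item Conclude that $k\in BV$ on compacts. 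The gap $\delta$ ensures that on each compact interval the total variation is finite; this uses that $\phi-\psi\ge\delta$ together with the c\`adl\`ag oscillation bound for $\phi$ and $\psi$.
\end{enumerate}

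\textbf{Main obstacle.} I expect the hardest step to be the verification of the minimality (support) conditions directly from the explicit min/inf/sup formula, including the correct handling of the initial term $[-\phi_0^-]$ and of jumps. To do this, I would carry out the case analysis on which argument attains the minimum and compare with the alternating-time construction of step 3. Alternatively, I would simply reproduce the proof of the corresponding theorem of \cite{Li}, which never used bi-Lipschitz continuity beyond deriving the c\`adl\`ag property of $\phi,\psi$ and \eqref{separated}, both of which now come from Assumption \ref{ass}(iii).
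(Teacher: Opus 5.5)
Your proposal is correct and is essentially the paper's argument. The paper only observes that the proof in \cite{Li} uses nothing beyond the c\`adl\`ag property and the separation \eqref{separated} of $\phi,\psi$, together with strict monotonicity of $g,h$ and $g<h$; your reduction of $\mathbb{SP}_g^h(s)$ to a Skorokhod problem for $k$ on the moving interval $[\psi,\phi]$ with zero input makes exactly that observation explicit. There is one small slip in step 1: the lower bound $\inf_{s\in[0,t]}[\phi_s\vee\sup_{r\in[s,t]}\psi_r]\ge\psi_t$ holds because $r=t$ lies in every $[s,t]$, not because of the choice $s=t$, which only gives the upper bound.
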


\begin{remark}\label{remark1}
 If $s\in C[0,\infty)$ and the c\`{a}dl\`{a}g requirements in Assumption \ref{ass} are replaced by continuous requirements, then each component of the solution to the Skorokhod problem $\mathbb{SP}_g^h(s)$ is continuous. In this case, (iii) of Definition \ref{def'} can be written as 
\begin{align*}
\int_0^\infty g(s,x_s)dk^g_s=0, \  \int_0^\infty h(s,x_s)dk^h_s=0.
\end{align*}
\end{remark}

Modifying the proof of Theorem 3.2 and Proposition 3.3 in \cite{Li}, we obtain some monotonicity results of the constraining processes with respect to the nonlinear constraints $g,h$ and the input $s$, respectively. 

\begin{proposition}\label{proposition3.3}
Suppose $g^i,h^i$ satisfy Assumption \ref{ass}, $i=1,2$ with $h^1\geq h^2$ and $g^1\leq g^2$. For any given $s\in D[0,\infty)$, let $(x^i,k^i)$ be the solution to the Skorokhod problem $\mathbb{SP}_{g^i}^{h^i}(s)$ with $k^i=k^{i,h}-k^{i,g}$. Then, for any $t\geq 0$, we have
\begin{align*}
k^{2,h}_t\geq k^{1,h}_t \textrm{ and } k^{2,g}_t\geq k^{1,g}_t.
\end{align*}
\end{proposition}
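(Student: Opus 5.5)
The plan is to translate the statement into the state space and then run a first-crossing (contradiction) argument, in the spirit of the proof of Theorem 3.2 in \cite{Li}. Write $a^i_t:=(h^i)^{-1}(t,0)$ and $b^i_t:=(g^i)^{-1}(t,0)$, which are c\`{a}dl\`{a}g by Assumption \ref{ass}.

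\textbf{Step 1: reformulate in the state space.} Since $h^i(t,\cdot),g^i(t,\cdot)$ are strictly increasing, condition (ii) of Definition \ref{def'} reads $a^i_t\le x^i_t\le b^i_t$. Moreover, $h^i(t,x^i_t)>0$ iff $x^i_t>a^i_t$ and $g^i(t,x^i_t)<0$ iff $x^i_t<b^i_t$. Hence $k^{i,h}$ can only increase at times $u$ with $x^i_u=a^i_u$, and $k^{i,g}$ only at times with $x^i_u=b^i_u$. The hypotheses $h^1\ge h^2$ and $g^1\le g^2$, together with strict monotonicity, give $a^1_t\le a^2_t$ and $b^1_t\ge b^2_t$. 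So the admissible interval of problem 1 contains that of problem 2. Since both problems share the input $s$, we have
\[
x^1_t-x^2_t=(k^{1,h}_t-k^{2,h}_t)-(k^{1,g}_t-k^{2,g}_t).
\]

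\textbf{Step 2: the two key implications.} Set $D^h:=k^{1,h}-k^{2,h}$ and $D^g:=k^{1,g}-k^{2,g}$. If $k^{1,h}$ charges a time $u$, then $x^1_u=a^1_u\le a^2_u\le x^2_u$, so $D^h_u\le D^g_u$. Symmetrically, if $k^{1,g}$ charges $u$, then $x^1_u=b^1_u\ge b^2_u\ge x^2_u$, so $D^g_u\le D^h_u$. Thus whenever one of the processes of problem 1 is active, its own excess is dominated by the excess of the other process.

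\textbf{Step 3: contradiction.} Suppose $\max(D^h_t,D^g_t)>0$ for some $t$. Set
\[
\tau:=\inf\{t:\max(D^h_t,D^g_t)>0\}
\]
(with $k_{0-}=0$, so all differences start at $0$). Near $\tau$, one of the differences, say $D^h$, becomes positive. Since $k^{2,h}$ is nondecreasing, this forces $k^{1,h}$ to increase at times $u\ge\tau$ arbitrarily close to $\tau$ (or to jump at $\tau$). At such $u$, Step 2 gives $D^g_u\ge D^h_u>0$, so $D^g$ is positive there as well. Because $k^{2,g}$ is nondecreasing, $D^g$ can only have grown from its value $\le 0$ before $\tau$ through increases of $k^{1,g}$. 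At such increase times Step 2 gives $D^h\ge D^g$, and this reverse inequality is what closes the loop.

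To turn the loop into a contradiction I would exploit the separation \eqref{separated}: $b^i-a^i\ge\delta>0$ uniformly. Both $k^{1,h}$ and $k^{1,g}$ cannot act in an arbitrarily short window, because $x^1$ would have to travel from $a^1$ to $b^1$, a distance of at least $\delta$. So on a small enough interval after $\tau$ only one of them is active. On that interval the other difference is nonincreasing, and the active one obeys the bound from Step 2 by the non-active one. Together these keep both differences $\le 0$, contradicting the definition of $\tau$.

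\textbf{Main obstacle.} I expect the delicate part to be Step 3 in the presence of jumps. The inputs $s$, $a^i$, $b^i$ are only c\`{a}dl\`{a}g, so the pushes can occur as jumps at $\tau$ itself, and the ``small window'' argument needs right-continuity at $\tau$ together with a separate analysis of the jump $\Delta k^{i,\cdot}_\tau$. For the jump, I would first try the explicit representation of Theorem \ref{thm1.1} at a single time: the post-jump position is the projection of $x^i_{\tau-}+\Delta s_\tau$ onto $[a^i_\tau,b^i_\tau]$, and projection onto a larger interval moves the point less in each direction. That comparison handles the jump cleanly, and after it the continuous-time argument above applies.
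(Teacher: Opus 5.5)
Your argument is correct and follows the route the paper indicates: adapt the comparison proof of Theorem 3.2 in \cite{Li}. The paper itself writes out no argument. Your version combines the two implications of Step 2 with a right-continuity window after $\tau$ on which only one of $k^{1,h},k^{1,g}$ can charge, and it uses only the monotonicity, c\`{a}dl\`{a}g and separation properties available under Assumption \ref{ass}.

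You do not need a separate jump analysis. The window $[\tau,\tau+\varepsilon)$ obtained from right-continuity of $x^1,a^1,b^1$ at $\tau$ already contains $\tau$, and Step 2 applied there, together with $D^h_{\tau-},D^g_{\tau-}\le 0$, handles any jump at $\tau$. Your projection remark, read as a comparison of jump sizes, would not suffice on its own, because $x^1_{\tau-}\neq x^2_{\tau-}$ in general.
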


\begin{proposition}\label{proposition3.4}
Let Assumption \ref{ass} hold. Given $c^i_0\in\mathbb{R}$ and $s^i\in D[0,\infty)$ with $s^1_0=s^2_0=0$, $i=1,2$, suppose that there exists $\nu\in I[0,\infty)$ such that $s^1=s^2+\nu$. Let $(x^i,k^i)=\mathbb{SP}_g^h(c_0^i+s^i)$ with decomposition $k^i=k^{i,h}-k^{i,g}$, $i=1,2$. Then, we have
\begin{itemize}
\item[1.] $k_t^{1,h}-(c^2_0-c^1_0)^+\leq k^{2,h}_t\leq k^{1,h}_t+\nu_t+(c_0^1-c_0^2)^+$;
\item[2.] $k_t^{2,g}-(c^2_0-c^1_0)^+\leq k^{1,g}_t\leq k^{2,g}_t+\nu_t+(c_0^1-c_0^2)^+$.
\end{itemize}
\end{proposition}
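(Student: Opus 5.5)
We prove Proposition \ref{proposition3.4} by reducing it to a Skorokhod problem with translated constraints and using that every boundary there is a \emph{constant}. Set $\tilde s^i:=c^i_0+s^i$. By \eqref{phipsi}, the associated functions are $\phi^i_t=g^{-1}(t,0)-\tilde s^i_t$ and $\psi^i_t=h^{-1}(t,0)-\tilde s^i_t$. Write $a_t:=h^{-1}(t,0)$ and $b_t:=g^{-1}(t,0)$; then $a,b\in D[0,\infty)$ and $\inf_t(b_t-a_t)>0$. Because $g(t,\cdot)$ and $h(t,\cdot)$ are strictly increasing, $g(t,x)\le 0\le h(t,x)$ holds iff $a_t\le x\le b_t$. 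Likewise $h(t,x)>0$ iff $x>a_t$, and $g(t,x)<0$ iff $x<b_t$. Hence $(x^i,k^i)=\mathbb{SP}_g^h(\tilde s^i)$ is exactly the solution of the classical two-sided Skorokhod problem on the time-dependent interval $[a,b]$ with input $\tilde s^i$, with the same decomposition $k^i=k^{i,h}-k^{i,g}$. The problem is therefore reduced to the setting of \cite{BKR,S1,S2}, equivalently Proposition 3.3 in \cite{Li} with $g(t,x)=x-b_t$, $h(t,x)=x-a_t$. That proposition treats the case $c_0^1=c_0^2$; the task is to incorporate the differing initial values.

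The comparison is proved in two steps, first shifting the initial value and then adding the increasing perturbation.

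\textbf{Step 1 (same input, different initial values).} Compare $\mathbb{SP}(c^1_0+s^1)$ with $\mathbb{SP}(c^2_0+s^1)$, and put $\delta:=c^2_0-c^1_0$. The input difference is the constant $\delta$, which is trivially of the form \emph{constant $+$ increasing} in whichever direction has the right sign. Suppose first $\delta\ge 0$. Regard $c^2_0+s^1$ as $c^1_0+s^1$ plus the nondecreasing function $\nu'\equiv\delta$, with a jump at time $0$, which is permitted because $k_{0-}=0$ and the inputs are c\`adl\`ag. Applying the $c_0$-free monotonicity result (Proposition 3.3 of \cite{Li}, via the reduction above) with this $\nu'$ gives
\[
k^{1,h}_t-\delta\le \tilde k^{h}_t\le k^{1,h}_t .
\]
The case $\delta<0$ is symmetric, and the analogous bounds hold for the $g$-components.

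\textbf{Step 2 (same initial value, increasing perturbation).} Compare $\mathbb{SP}(c^2_0+s^1)$ with $\mathbb{SP}(c^2_0+s^2)$, where $s^1=s^2+\nu$. This is exactly the $c_0$-free proposition, giving
\[
k^{2,h}\ge \tilde k^{h},\qquad k^{2,h}\le \tilde k^{h}+\nu,
\]
and the corresponding bounds for the $g$-components.

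Chaining the two steps yields item 1: $k^{1,h}_t-(c^2_0-c^1_0)^+\le k^{2,h}_t\le k^{1,h}_t+\nu_t+(c^1_0-c^2_0)^+$. Item 2 follows in the same way.

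The main obstacle is that the $c_0$-free monotonicity result must be valid in exactly the form used, including when the increasing perturbation jumps at time $0$. If one prefers a self-contained argument, I would prove it directly by a minimality argument on the constraining processes. Suppose $\tilde s^1=\tilde s^2+\nu$ with $\nu$ nondecreasing. At any time where $k^{1,h}$ increases, $x^1$ sits at the lower boundary $a$. If at that time $k^{1,h}>k^{2,h}$, then $x^2\le x^1=a$, so $x^2=a$ and $k^{2,h}$ must increase as well. Formalising this at the last time before $t$ at which the difference is nonpositive gives $k^{1,h}\le k^{2,h}+\nu$; the other bounds are obtained analogously.

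An alternative route to these inequalities is the explicit formula in Theorem \ref{thm1.1}: it is monotone in $\psi$ and $\phi$, and shifting the input shifts both $\psi$ and $\phi$. The formula only gives $k$, though, not $k^h$ and $k^g$ separately. The boundary-touching argument, done carefully with right-continuity and the separation $\inf_t(b_t-a_t)>0$ (so that no time can have both constraints active), is the delicate part.
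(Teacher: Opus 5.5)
Your proof is correct in substance, but it takes a different route from the paper. The paper gives no argument beyond saying that the proof of Proposition 3.3 in \cite{Li} can be modified to work under Assumption \ref{ass}. That leaves the reader to check that the bi-Lipschitz condition is never really used.

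You avoid this entirely. Since $g(t,\cdot)$ and $h(t,\cdot)$ are strictly increasing with zeros $b_t=g^{-1}(t,0)$ and $a_t=h^{-1}(t,0)$, the sets $\{g(t,x)\le 0\le h(t,x)\}$, $\{g(t,x)<0\}$ and $\{h(t,x)>0\}$ coincide with $\{a_t\le x\le b_t\}$, $\{x<b_t\}$ and $\{x>a_t\}$. Hence $\mathbb{SP}_g^h(s)$, \emph{including} the separate Skorokhod conditions on $k^h$ and $k^g$, is literally the problem $\mathbb{SP}_{g'}^{h'}(s)$ with $g'(t,x)=x-b_t$ and $h'(t,x)=x-a_t$.

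You should say explicitly that $g',h'$ are bi-Lipschitz with constant $1$ and satisfy $\inf_{t,x}(h'-g')=\inf_t(b_t-a_t)>0$. That is exactly what makes the results of \cite{Li} apply verbatim. This buys more than the paper's route. It explains why only $g^{-1}(\cdot,0)$ and $h^{-1}(\cdot,0)$ enter (as in Theorem \ref{thm1.1} and Proposition \ref{prop4.1}), and it transfers every deterministic result of \cite{Li} at once without re-running any proof. Your chaining of the inequalities in Steps 1--2 is also correct, in both cases $c^1_0\le c^2_0$ and $c^1_0>c^2_0$.

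The weak spot is Step 1. The comparison result you need for the linear problem is exactly the present statement, carrying the constants $c^i_0$. That is the standard form of such theorems (cf.\ \cite{KLRS}). Since the paper obtains this proposition by modifying Proposition 3.3 of \cite{Li}, it is presumably the form of that result too. If you cite it in that form, the proof ends immediately after your reduction and Steps 1--2 are unnecessary.

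If you really only had a version normalized by $\nu_0=0$, Step 1 would not be covered. Comparing $\tilde s$ with $\tilde s+\delta$ is precisely the case $\nu\equiv 0$ of the proposition, so invoking the normalized result with $\nu'\equiv\delta$ would be circular.

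Your fallback minimality sketch does not repair this. At a time where $k^{1,h}$ increases and $k^{1,h}>k^{2,h}$, the claim $x^2\le x^1$ does not follow, because $x^1-x^2=\nu+(k^{1,h}-k^{2,h})-(k^{1,g}-k^{2,g})$ also involves the $g$-components. You would need $k^{1,g}\le k^{2,g}+\nu$ at the same time, so all four inequalities must be proved jointly, for instance by a first-violation-time argument using right-continuity and $\inf_t(b_t-a_t)>0$. Also, that argument would yield $k^{1,h}\le k^{2,h}$, not the weaker $k^{1,h}\le k^{2,h}+\nu$ you state.
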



The following proposition provides the continuity property of the solution to the Skorokhod problem with respect to input function and constraint functions, which is slightly different from Proposition 3.4 in \cite{Li}.
\begin{proposition}\label{prop4.1}
Suppose that $(g^i,h^i)$ satisfy Assumption \ref{ass}, $i=1,2$. Given $s^i\in D[0,\infty)$, let $(x^i,k^i)$ be the solution to the Skorokhod problem $\mathbb{SP}_{g^i}^{h^i}(s^i)$. Then, we have
\begin{align*}
\sup_{t\in[0,T]}|k^1_t-k^2_t|\leq \sup_{t\in[0,T]}|s^1_t-s^2_t|+\sup_{t\in[0,T]}(\bar{g}_t\vee\bar{h}_t),
\end{align*}
where
\begin{align*}
&\bar{g}_t:=|(g^1)^{-1}(t,0)-(g^2)^{-1}(t,0)|,\\
&\bar{h}_t:=|(h^1)^{-1}(t,0)-(h^2)^{-1}(t,0)|.
\end{align*}
\end{proposition}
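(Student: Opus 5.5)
We use the explicit representation of Theorem \ref{thm1.1}. For $i=1,2$ set $\phi^i_t=(g^i)^{-1}(t,0)-s^i_t$ and $\psi^i_t=(h^i)^{-1}(t,0)-s^i_t$, as in \eqref{phipsi}. Then
\begin{align*}
k^i_t=\min\left([-(\phi^i_0)^-]\vee \sup_{r\in[0,t]}\psi^i_r,\ \inf_{u\in[0,t]}\Big[\phi^i_u\vee \sup_{r\in[u,t]}\psi^i_r\Big]\right).
\end{align*}
The whole argument reduces to showing that this map, from the pair $(\phi,\psi)$ to $k$, is $1$-Lipschitz in the sup-norm.

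\textbf{Step 1: pointwise bounds on the inputs.} For every $t\in[0,T]$,
\begin{align*}
|\phi^1_t-\phi^2_t|&\leq \bar g_t+|s^1_t-s^2_t|,\\
|\psi^1_t-\psi^2_t|&\leq \bar h_t+|s^1_t-s^2_t|.
\end{align*}
Put $\delta:=\sup_{t\in[0,T]}|s^1_t-s^2_t|+\sup_{t\in[0,T]}(\bar g_t\vee\bar h_t)$. Then $|\phi^1_t-\phi^2_t|\le\delta$ and $|\psi^1_t-\psi^2_t|\le\delta$ for all $t\in[0,T]$.

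\textbf{Step 2: Lipschitz property of the building blocks.} Each elementary operation used in the formula is $1$-Lipschitz for the sup-norm. Specifically:
\begin{itemize}
\item[(a)] $|a\vee b-a'\vee b'|\le |a-a'|\vee|b-b'|$, and the same holds for $\wedge$/$\min$.
\item[(b)] $|x^- - y^-|\le|x-y|$.
\item[(c)] Suprema and infima over a common index set satisfy $|\sup_r a_r-\sup_r a'_r|\le\sup_r|a_r-a'_r|$, and likewise for infima.
\end{itemize}

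\textbf{Step 3: apply the bounds to each component.} For fixed $t$ and $u\le t$:
\begin{align*}
\Big|\sup_{r\in[u,t]}\psi^1_r-\sup_{r\in[u,t]}\psi^2_r\Big|&\le\delta,\\
\Big|\phi^1_u\vee\sup_{r\in[u,t]}\psi^1_r-\phi^2_u\vee\sup_{r\in[u,t]}\psi^2_r\Big|&\le\delta.
\end{align*}
Taking the infimum over $u\in[0,t]$ gives, by (c), a difference of at most $\delta$ between the second arguments of the $\min$. In the same way, $\big|(\phi^1_0)^- - (\phi^2_0)^-\big|\le\delta$ by (b), so the first arguments differ by at most $\delta$ as well, using (a) and (c). Applying (a) to the outer $\min$ then yields $|k^1_t-k^2_t|\le\delta$. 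Taking the supremum over $t\in[0,T]$ gives the claim.

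\textbf{Where care is needed.} The only possible subtlety is that the constant in the bound is exactly one, with no factor of $2$. This holds because $s$ enters $\phi$ and $\psi$ through the same shift, and the $g$- and $h$-perturbations appear in separate arguments. These are combined through a maximum, which is why the bound involves $\bar g\vee\bar h$ rather than a sum. The c\`adl\`ag issues, such as suprema over intervals, cause no difficulty: all estimates are pointwise in $t$ and are taken over identical index sets for both solutions.
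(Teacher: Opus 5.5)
Your proof is correct and takes essentially the same route as the paper. Both arguments use the explicit representation of $k$ from Theorem \ref{thm1.1} and the $1$-Lipschitz property of that map in $(\phi,\psi)$ for the sup-norm, which gives $|k^1_t-k^2_t|\le \sup_{s\in[0,T]}|\phi^1_s-\phi^2_s|\vee\sup_{s\in[0,T]}|\psi^1_s-\psi^2_s|$; both then bound these differences via \eqref{phipsi}. Your write-up makes explicit the elementary Lipschitz estimates that the paper leaves to "a similar analysis as the proof of Proposition 3.4 in \cite{Li}".
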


\begin{proof}
By a similar analysis as the proof of Proposition 3.4 in \cite{Li} and the representation for the second component of the solution to the Skorokhod problem obtained in Theorem \ref{thm1.1}, we have for any $t\in[0,T]$,
\begin{align*}
|k^1_t-k^2_t|\leq \sup_{s\in[0,T]}|\phi^1_s-\phi^2_s|\vee \sup_{s\in[0,T]}|\psi^1_s-\psi^2_s|.
\end{align*}
Recalling \eqref{phipsi}, we obtain that
\begin{align*}
\phi^i_t=(g^i)^{-1}(t,0)-s^i_t, \ \psi^i_t=(h^i)^{-1}(t,0)-s^i_t, \ i=1,2.
\end{align*}
All the above analysis indicates the desired result. 
\end{proof}

	\section{Reflected $G$-Brownian motion with nonlinear constraints}
	
	In this section, we first consider the reflected $G$-Brownian motion with nonlinear constraints on a fixed time horizon $[0,T]$. Actually, this is a special case of reflected $G$-SDE \eqref{eq1.1} by taking $x=f=h=0$ and $g=1$. We propose the following assumption on the nonlinear constraints $l,r$.
	
	\begin{assumption}\label{ass1}
The functions $r,l:[0,T]\times \Omega_T\times \mathbb{R}\rightarrow \mathbb{R}$ satisfy the following conditions:
\begin{itemize}
\item[(i)] For each fixed $x\in\mathbb{R}$, $r(\cdot,x),l(\cdot,x)\in C[0,\infty)$, q.s.
\item[(ii)] For any fixed $t\geq 0$, $r(t,\cdot)$, $l(t,\cdot)$ are strictly increasing  and $r(t,x)>l(t,x)$ for any fixed $(t,x)\in[0,\infty)\times\mathbb{R}$, q.s.
\item[(iii)] For any $t\geq 0$, the inverse $r^{-1}(t,0)$ and $l^{-1}(t,0)$ exist, q.s. and $\{r^{-1}(t,0)\}_{t\in[0,T]}$, $\{l^{-1}(t,0)\}_{t\in[0,T]}$ belong to $S_G^p(0,T)$ for some $p\geq 2$. Moreover, 
\begin{align*}
\inf_{(t,\omega)\in[0,T]\times\Omega_T}(l^{-1}(t,\omega,0)-r^{-1}(t,\omega,0))>0.
\end{align*}
\end{itemize}
\end{assumption}

\begin{remark}\label{remark2}
Suppose that $l,r$ take the following form
\begin{align*}
l(t,x)=\rho(x)-\beta_t, \ r(t,x)=\rho(x)-\alpha_t,
\end{align*}
where $\rho$ is the same as in Remark \ref{remark1'} and  $\alpha,\beta\in S_G^{2p}(0,T)$ satisfy $\inf_{(t,\omega)\in [0,T]\times\Omega_T}(\beta_t(\omega)-\alpha_t(\omega))>0$. It is easy to check that $r^{-1}(t,0)=\rho^{-1}(\alpha_t)$ and $l^{-1}(t,0)=\rho^{-1}(\beta_t)$, where 
\begin{align*}
\rho^{-1}(x)=\begin{cases}
x^2, &x\geq0,\\
-x^2, &x<0.
\end{cases}
\end{align*}
 Then, $l,r$ fulfill the conditions in Assumption \ref{ass1}.
\end{remark}

	Before introducing the main result, we first give the following lemma.

\begin{lemma}\label{SGP}
Suppose that $\alpha,\beta\in S_G^p(0,T)$ for some $p\geq 1$. Define the process $\gamma$ by
\begin{align*}
\gamma_t:=\sup_{s\in[0,t]}\left[\alpha_s\wedge (\inf_{r\in[s,t]}\beta_r)\right].
\end{align*}
Then, we have $\gamma\in S_G^p(0,T)$.
\end{lemma}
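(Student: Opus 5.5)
The plan is to approximate $\alpha,\beta$ by elements of $S_G^0(0,T)$ and use two facts: the map $(\alpha,\beta)\mapsto\gamma$ is Lipschitz in the sup norm, and it sends $S_G^0(0,T)$-type processes into processes that can themselves be approximated in $S_G^p$. Since $S_G^p(0,T)$ is a closed (complete) space, this will give $\gamma\in S_G^p(0,T)$.

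\textbf{Step 1 (Lipschitz estimate).} For two pairs $(\alpha^i,\beta^i)$ with associated $\gamma^i$, the elementary inequalities $|\sup a-\sup b|\le\sup|a-b|$, $|\inf a-\inf b|\le \sup|a-b|$ and $|a\wedge b-a'\wedge b'|\le |a-a'|\vee|b-b'|$ give, pathwise,
\begin{align*}
\sup_{t\in[0,T]}|\gamma^1_t-\gamma^2_t|\le \sup_{t\in[0,T]}|\alpha^1_t-\alpha^2_t|\vee\sup_{t\in[0,T]}|\beta^1_t-\beta^2_t|.
\end{align*}
Taking $\hat{\mathbb{E}}[\cdot^p]$ then yields $\|\gamma^1-\gamma^2\|_{S_G^p}^p\le \|\alpha^1-\alpha^2\|_{S_G^p}^p+\|\beta^1-\beta^2\|_{S_G^p}^p$. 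It therefore suffices to prove that $\gamma\in S_G^p(0,T)$ whenever $\alpha,\beta\in S_G^0(0,T)$: for general $\alpha,\beta$ we take approximating sequences $\alpha^n,\beta^n\in S_G^0$, the corresponding $\gamma^n$ form a Cauchy sequence converging to $\gamma$, and completeness concludes.

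\textbf{Step 2 (elementary case).} Let $\alpha_t=h_1(t,B_{t_1\wedge t},\dots,B_{t_n\wedge t})$ and $\beta_t=h_2(t,\dots)$ with bounded Lipschitz $h_i$. Refine $\{t_i\}$ to a partition $\pi=\{s_0<\dots<s_m\}$ of mesh $\delta$ and set $\gamma^\pi_t:=\gamma_{s_j}$ for $t\in[s_j,s_{j+1})$. This step-function choice fails to be of the form required in $S_G^0$, which uses continuous $h$, so we instead use a continuous interpolation between grid values, or the discrete functional
\begin{align*}
\tilde\gamma_t=\max_{s_k\le t}\Big[\alpha_{s_k}\wedge\min_{s_k\le s_l\le t}\beta_{s_l}\Big]
\end{align*}
with $t$ entering through the endpoint values $\alpha_t,\beta_t$, smoothed linearly in $t$ between grid points. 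Each such process is a bounded Lipschitz function of $t$ and of finitely many $B_{s_j\wedge t}$, hence lies in $S_G^0$ (or is an $S_G^p$-limit of such).

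\textbf{Step 3 (error control, the main obstacle).} The pathwise error $\sup_t|\gamma_t-\tilde\gamma_t|$ is bounded by a constant times the oscillation of $\alpha$ and $\beta$ over windows of length $\delta$, namely $\sup_t\sup_{s\in[t,t+\delta]}(|\alpha_t-\alpha_s|+|\beta_t-\beta_s|)$. By Proposition \ref{the3.7} its $p$-th $G$-expectation tends to $0$ as $\delta\to0$. Notably, this bound does not require $\alpha,\beta\in S_G^0$ at all, so we could bypass Step 2's approximation and work directly with the discretization of general $\alpha,\beta\in S_G^p$. The difficulty is then to show that the discretized functional, built from $\alpha_{s_k}\in L_G^p(\Omega_{s_k})$ by max/min operations and piecewise-linear interpolation in $t$, belongs to $S_G^p$. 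This holds if $S_G^p$ is closed under Lipschitz operations applied to finitely many of its elements and contains $\xi\,\varphi(t)$-type step-in-time interpolations with adapted endpoints. I would verify this using the approximations $h(t,B_{t_1\wedge t},\dots)$, which are stable under composition with Lipschitz maps, combined with the Lipschitz estimate of Step 1.
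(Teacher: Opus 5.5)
Your Step 3 is essentially the paper's proof. The paper discretizes on the grid $t_i=iT/n$ via
\[
\gamma^n_t=\vee_{j=0}^{n}\bigl[\alpha_{t\wedge t_j}\wedge(\wedge_{i=j}^{n}\beta_{t\wedge t_i})\bigr],
\]
bounds $\sup_t|\gamma_t-\gamma^n_t|$ by the oscillations of $\alpha,\beta$ over grid cells, and concludes with Proposition \ref{the3.7} and completeness of $S_G^p(0,T)$, so your Steps 1--2 (the reduction to $S_G^0$) are not needed. Writing the discrete functional with stopped processes, as above, makes it automatically continuous and a finite max/min of stopped elements of $S_G^p(0,T)$, so no linear smoothing is required; interpolating linearly between $\gamma_{s_j}$ and $\gamma_{s_{j+1}}$ would in fact destroy adaptedness.
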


\begin{proof}
For any fixed positive integer $n$, let $t^n_i=\frac{iT}{n}$, $i=0,1,\cdots,n$. For simplicity, we always omit the superscript $n$. Set 
\begin{align*}
\gamma^n_t:=\vee_{j=0}^{n}\left[\alpha_{t\wedge t_j}\wedge (\wedge_{i=j}^n \beta_{t\wedge t_i})\right].
\end{align*}
Then, we have $\gamma^n\in S_G^p(0,T)$. For any $t\in[t_k,t_{k+1})$, $k=0,1,\cdots,n-1$, it is easy to check that 
\begin{align*}
\gamma_t=\left(\vee_{j=0}^{k} \sup_{s\in[t_j,t_{j+1})}\left[\alpha_s\wedge \inf_{r\in [s,t_{j+1})}\beta_r\wedge \left(\wedge_{i=j+1}^{k-1} \inf_{r\in[t_i,t_{i+1})}\beta_r\right)\wedge \inf_{r\in[t_k,t]}\beta_r\right]\right)
\vee \sup_{s\in[t_k,t]}\left[\alpha_s\wedge \inf_{r\in[s,t]}\beta_r\right]
\end{align*}
and 
\begin{align*}
\gamma^n_t=\left(\vee_{j=0}^{k}\left[\alpha_{t_j}\wedge (\wedge_{i=j}^k \beta_{t_i})\wedge \beta_t\right]\right)\vee [\alpha_t\wedge \beta_t].
\end{align*}
Consequently, for $t\in[t_k,t_{k+1})$, we have
\begin{align*}
|\gamma_t-\gamma^n_t|\leq &\left(\vee_{j=0}^k\sup_{s\in[t_j,t_{j+1})}\left[|\alpha_s-\alpha_{t_j}|\vee I_{s,t}^n(\beta)\right]\right)
\vee \sup_{s\in[t_k,t]}\left[|\alpha_s-\alpha_t|\vee \sup_{r\in[s,t]}|\beta_r-\beta_t|\right],
\end{align*}
where
\begin{align*}
I_{s,t}^n(\beta)=\sup_{r\in [s,t_{j+1})}|\beta_r-\beta_{t_i}| \vee \sup_{r\in[t_k,t]}|\beta_r-\beta_{t_k}|\vee \sup_{r\in[t_k,t]}|\beta_r-\beta_{t}|\vee_{i=j+1}^{k-1}\sup_{r\in[t_i,t_{i+1})}|\beta_r-\beta_{t_i}|.
\end{align*}
Then, there exists a constant $C$ depending on $p$, such that 
\begin{align*}
\hat{\mathbb{E}}[\sup_{t\in[0,T]}|\gamma_t-\gamma^n_t|^p]\leq C(\hat{\mathbb{E}}[\sup_{i=0,1,\cdots,n-1}\sup_{s\in[t_i,t_{i+1}]}|\alpha_s-\alpha_{t_i}|^p]+\hat{\mathbb{E}}[\sup_{i=0,1,\cdots,n-1}\sup_{s\in[t_i,t_{i+1}]}|\beta_s-\beta_{t_i}|^p]).
\end{align*}
By Proposition \ref{the3.7}, we have
\begin{align*}
\lim_{n\rightarrow\infty}\hat{\mathbb{E}}[\sup_{t\in[0,T]}|\gamma_t-\gamma^n_t|^p]=0,
\end{align*}
which yields the desired result.
\end{proof}

		\begin{definition}
	We denote by $M_{BV}(0,T)$ the collection of all q.s. continuous processes $X$ whose paths $X_\cdot(\omega):[0,T]\rightarrow \mathbb{R}$ are of bounded variation outside a polar set $A$.
	\end{definition}	
	
	Based on Theorem \ref{thm1.1}, we may construct the doubly reflected $G$-Brownian motion with nonlinear constraints $l,r$. 
	
	\begin{theorem}\label{thm4.1}
	Let Assumption \ref{ass1} hold. 
	Then, there exists a unique pair of processes $(X,A)\in S_G^p(0,T)\times (M_{BV}(0,T)\cap S_G^p(0,T))$, such that 
	\begin{displaymath}
	X_t=B_t+A_t, \ t\in[0,T], \textrm{ q.s.,}
	\end{displaymath}
	where  
	\begin{itemize}
	\item[(a)] $l(t,X_t)\leq 0\leq r(t,X_t)$, $t\in[0,T]$, q.s.;
	\item[(b)] $A_{0-}=0$ and $A$ has the decomposition $A=A^r-A^l$, where $A^r,A^l\in M_I(0,T)\cap S_G^p(0,T)$ and
\begin{align*}
\int_0^T l(s,X_s)dA^l_s=\int_0^T r(s,X_s)dA^r_s=0, \textrm{ q.s.}
\end{align*}
\end{itemize}
	\end{theorem}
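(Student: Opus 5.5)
We construct the solution pathwise. Fix a polar set $N$ outside of which $B_\cdot(\omega)$ is continuous, $l(\cdot,\omega,x)$ and $r(\cdot,\omega,x)$ are continuous for each $x$, the strict monotonicity in (ii) holds, and $r^{-1}(\cdot,\omega,0)$, $l^{-1}(\cdot,\omega,0)$ exist and are continuous. For such $\omega$, set $g(t,x)=l(t,\omega,x)$ and $h(t,x)=r(t,\omega,x)$. These satisfy Assumption \ref{ass} with continuity in place of c\`{a}dl\`{a}g. Apply Theorem \ref{thm1.1} with input $s=B_\cdot(\omega)$ to get $(X(\omega),A(\omega))=\mathbb{SP}_g^h(B(\omega))$, together with the constraining processes $A^r=k^h$ and $A^l=k^g$. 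On $N$ set everything equal to $0$. By Remark \ref{remark1}, all components are continuous. Moreover $A$ has bounded variation as the difference of increasing functions, and the Skorokhod conditions take the integral form in (b). Pathwise uniqueness in Theorem \ref{thm1.1} gives uniqueness q.s.; any other solution coincides with ours outside a polar set.

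The substantive part is to show $A,A^r,A^l\in S_G^p(0,T)$. Since $B_0=0$ and $\phi_t=l^{-1}(t,0)-B_t$, $\psi_t=r^{-1}(t,0)-B_t$, both $\phi$ and $\psi$ lie in $S_G^p(0,T)$ by Assumption \ref{ass1}(iii) and $B\in S_G^p$. The representation
\begin{align*}
A_t=\min\Big([-\phi_0^-]\vee \sup_{r\in[0,t]}\psi_r,\ \inf_{s\in[0,t]}\big[\phi_s\vee \sup_{r\in[s,t]}\psi_r\big]\Big)
\end{align*}
is built from operations of the type in Lemma \ref{SGP}. Writing $\inf_s[\phi_s\vee\sup_{r\in[s,t]}\psi_r]=-\sup_s[(-\phi_s)\wedge\inf_{r\in[s,t]}(-\psi_r)]$, Lemma \ref{SGP} applied to $\alpha=-\phi$ and $\beta=-\psi$ puts this term in $S_G^p$. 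The running supremum $\sup_{r\le t}\psi_r$ is covered by Lemma \ref{SGP} with $\beta$ a large constant, or by the same discretization argument. Finally, $-\phi_0^-$ is a constant in time that lies in $L^p_G(\Omega_0)$. Maxima and minima of elements of $S_G^p$ remain in $S_G^p$, because $|a\vee b-a'\vee b'|\le|a-a'|\vee|b-b'|$ passes to approximating sequences. Hence $A\in S_G^p$, and therefore $X=B+A\in S_G^p$.

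The main obstacle is the individual processes $A^r$ and $A^l$, since Theorem \ref{thm1.1} only represents $A$. The first approach is to express them as running functionals of $X$ and $A$ that Lemma \ref{SGP}-type arguments can handle. Following \cite{Li}, $A^r$ increases only when $X_t=r^{-1}(t,0)$, i.e.\ when $A_t=\psi_t$, and $A^l$ increases only when $A_t=\phi_t$. Because $\phi-\psi$ is bounded away from zero uniformly in $(t,\omega)$, the two types of contact alternate on time intervals of positive length. One can then try a discretization as in the proof of Lemma \ref{SGP}: on a grid, define approximations $A^{r,n}$ through the explicit one-sided Skorokhod formula $\sup_{u\le t}(\psi_u-\cdot)^+$ on each excursion, and bound $\sup_t|A^r_t-A^{r,n}_t|$ by moduli of continuity of $\phi$, $\psi$, and $A$, which tend to zero in $L^p_G$ by Proposition \ref{the3.7}.

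If that approach proves too delicate, the fallback is the following. Verify quasi-continuity of $A^r_t$ directly from the pathwise continuity of the map $(\phi,\psi)\mapsto k^h$ in the supremum norm; this should follow from the construction in \cite{Li}, which is valid under the separation condition. Then use integrability, namely $A^r_T\le$ the total variation bound given by Proposition \ref{proposition3.4}-type comparisons, which in turn is controlled by $\sup_t|\psi_t|+\sup_t|\phi_t|$ times the number of crossings. Conclude with the characterization theorems, Theorem \ref{LGp}, combined with the same uniform approximation.
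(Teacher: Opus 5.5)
Your construction and your proof that $X,A\in S_G^p(0,T)$ follow the paper's own argument. You solve $\mathbb{SP}_{l_\omega}^{r_\omega}(B(\omega))$ pathwise, represent $A$ via Theorem \ref{thm1.1} with $\phi=l^{-1}(\cdot,0)-B$ and $\psi=r^{-1}(\cdot,0)-B$, and apply Lemma \ref{SGP}. You also fill in details the paper skips: the reduction of the $\inf$--$\sup$ term to Lemma \ref{SGP} with $(\alpha,\beta)=(-\phi,-\psi)$, the treatment of $\sup_{r\le t}\psi_r$ and of $-\phi_0^-$, and the stability of $S_G^p(0,T)$ under $\vee,\wedge$.

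The paper's proof stops at that point (``it remains to prove that $X,A\in S_G^p(0,T)$'') and never shows $A^r,A^l\in S_G^p(0,T)$. Your proposal does not establish it either. Both of your routes ultimately need an $L^p$ bound on $A^r_T$, that is, on the number of alternations between the two constraints:
\begin{itemize}
\item the fallback says so explicitly (``times the number of crossings'');
\item the discretization route adds one modulus-of-continuity error per excursion, so it needs the same bound;
\item Proposition \ref{proposition3.4} only compares two inputs that differ by a nondecreasing function, so it gives no bound on $A^r_T$ itself.
\end{itemize}
Assumption \ref{ass1} gives no such control, and in fact the claim is false in this generality.

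\textbf{Counterexample.} Fix $\delta>0$ and set $\Lambda:=\exp(cB_{T/2}^2)$ with $c:=1/(pT\bar{\sigma}^2)$. Put $\alpha_t:=2\delta\sin(\Lambda(t-T/2)^+)$, $\beta:=\alpha+\delta$, $r(t,x)=x-\alpha_t$ and $l(t,x)=x-\beta_t$.
\begin{itemize}
\item Replacing $\Lambda$ by $\Lambda\wedge M$ gives elements of $S_G^0(0,T)$ within $4\delta I_{\{\Lambda>M\}}$ of $\alpha$, uniformly in $t$. Hence $\alpha,\beta\in S_G^q(0,T)$ for every $q$, and Assumption \ref{ass1} holds.
\item Also $|A_t|\le 3\delta+\sup_s|B_s|$, which is consistent with the part $A\in S_G^p(0,T)$.
\item There are at least $\Lambda T/(4\pi)-2$ disjoint intervals $[s,u]\subset[T/2,T]$ of length $\pi/\Lambda$ on which $\alpha$ rises from $-2\delta$ to $2\delta$. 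On each, $X_s\le\beta_s=-\delta$ and $X_u\ge\alpha_u=2\delta$. Since $A^l$ is nondecreasing, $A^r_u-A^r_s\ge 2\delta$ whenever $|B_u-B_s|\le\delta$.
\item Let $P$ be the Wiener measure with volatility $\bar\sigma$, so that $E_P[\xi]\le\hat{\mathbb{E}}[\xi]$ on $L_G^1(\Omega_T)$. Given $\mathcal{F}_{T/2}$, these increments of $B$ are i.i.d. centered Gaussian with variance $\bar\sigma^2\pi/\Lambda$. Hence $E_P[A^r_T\,|\,\mathcal{F}_{T/2}]\ge C\delta\Lambda$ on $\{\Lambda\ge\Lambda_0\}$.
\item Conditional Jensen then gives $E_P[(A^r_T)^p]\ge (C\delta)^pE_P[\Lambda^pI_{\{\Lambda\ge\Lambda_0\}}]=\infty$, because $pcB_{T/2}^2$ has the law of $Z^2/2$ with $Z$ standard normal.
\end{itemize}
Thus $\hat{\mathbb{E}}[\sup_t|A^r_t|^p]=\infty$ and $A^r\notin S_G^p(0,T)$; the falling phases of $\alpha$ give the same conclusion for $A^l$.

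Without extra hypotheses, what survives is $A^r,A^l\in M_I(0,T)$ (pathwise). Membership in $S_G^p(0,T)$ needs an additional assumption controlling the oscillations of $l^{-1}(\cdot,0)$ and $r^{-1}(\cdot,0)$. Otherwise the $S_G^p$ claim should be restricted to $A$ and $X$.
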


\begin{proof}
Without loss of generality, assume that for any $\omega\in \Omega_T$, (i)-(iii) in Assumption \ref{ass1} hold. For any $\omega\in \Omega_T$, set 
\begin{align}\label{lomegaromega}
l_\omega(t,x):=l(t,\omega,x), \ r_\omega(t,x):=r(t,\omega,x).
\end{align}
 It is easy to check that $l_\omega,r_\omega$ satisfy Assumption \ref{ass}. By Theorem \ref{thm1.1}, the Skorokhod problem $\mathbb{SP}_{l_\omega}^{r_\omega}(B(\omega))$ admits a unique pair of solution, denoted by $(X(\omega),A(\omega))$. It remains to prove that $X,A\in S_G^p(0,T)$. 
By Theorem \ref{thm1.1}, $A$ can be represented as follows
\begin{align*}
A_t=\min\left([-\phi^-_0]\vee \sup_{r\in[0,t]}\psi_r,\inf_{s\in[0,t]}\left[\phi_s\vee \sup_{r\in[s,t]}\psi_r\right] \right),
\end{align*}
where 
\begin{align*}
\phi_t=l^{-1}(t,0)-B_t, \ \psi_t=r^{-1}(t,0)-B_t.
\end{align*}
Clearly, $\phi,\psi\in S_G^p(0,T)$. By Lemma \ref{SGP}, we obtain that $A\in S_G^p(0,T)$. The proof is complete.
\end{proof}

By the proof of Theorem \ref{thm4.1}, similar result still holds if the $G$-Brownian motion $B$ is replaced by some $S\in S_G^p(0,T)$.

\begin{theorem}\label{thm4.3}
Let Assumption \ref{ass1} hold. Given $S\in S_G^p(0,T)$, then, there exists a unique pair of processes $(X,A)\in S_G^p(0,T)\times (M_{BV}(0,T)\cap S_G^p(0,T))$, such that 
	\begin{displaymath}
	X_t=S_t+A_t, \ t\in[0,T], \textrm{ q.s.,}
	\end{displaymath}
	where  
	\begin{itemize}
	\item[(a)] $l(t,X_t)\leq 0\leq r(t,X_t)$, $t\in[0,T]$, q.s.;
	\item[(b)] $A_{0-}=0$ and $A$ has the decomposition $A=A^r-A^l$, where $A^r,A^l\in M_I(0,T)\cap S_G^p(0,T)$ and
\begin{align*}
\int_0^T l(s,X_s)dA^l_s=\int_0^T r(s,X_s)dA^r_s=0, \textrm{ q.s.}
\end{align*}
\end{itemize}
\end{theorem}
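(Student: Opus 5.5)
The plan follows the proof of Theorem \ref{thm4.1} line by line, with $B$ replaced by $S$. I will fix a polar set outside of which three things hold: (i)--(iii) of Assumption \ref{ass1}, continuity of $S_\cdot(\omega)$, and continuity of $r^{-1}(\cdot,\omega,0)$ and $l^{-1}(\cdot,\omega,0)$. Such a polar set exists because elements of $S_G^p(0,T)$ have q.s. continuous paths. For each $\omega$ off this set I define $l_\omega,r_\omega$ as in \eqref{lomegaromega}. These satisfy Assumption \ref{ass} (with continuity in place of the c\`{a}dl\`{a}g property), and $S(\omega)$, extended constantly beyond $T$, lies in $C[0,\infty)$. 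Theorem \ref{thm1.1} then gives a unique solution $(X(\omega),A(\omega))=\mathbb{SP}_{l_\omega}^{r_\omega}(S(\omega))$. On the polar set I set $X=S$ and $A=0$. Pathwise uniqueness in Theorem \ref{thm1.1} gives uniqueness of $(X,A)$ up to polar sets.

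By Remark \ref{remark1}, $X$, $A$, $A^r$ and $A^l$ are continuous q.s. Also $A^r,A^l$ are increasing and the Skorokhod condition takes the integral form in (b), so $A\in M_{BV}(0,T)$ and $A^r,A^l\in M_I(0,T)$. For the integrability I use the representation in Theorem \ref{thm1.1}:
\begin{align*}
A_t=\min\left([-\phi^-_0]\vee \sup_{u\in[0,t]}\psi_u,\inf_{s\in[0,t]}\left[\phi_s\vee \sup_{u\in[s,t]}\psi_u\right] \right),
\end{align*}
with $\phi_t=l^{-1}(t,0)-S_t$ and $\psi_t=r^{-1}(t,0)-S_t$. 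Both $\phi$ and $\psi$ lie in $S_G^p(0,T)$, since this space is linear. The first term, $[-\phi_0^-]\vee\sup_{u\le t}\psi_u$, is Lemma \ref{SGP} applied with $\alpha=\psi$ and $\beta$ a large constant (truncation by a constant preserves membership), and then taking a maximum with a random variable in $L_G^p(\Omega_0)$. For the second term, $-\inf_{s\le t}[\phi_s\vee\sup_{u\in[s,t]}\psi_u]=\sup_{s\le t}[(-\phi_s)\wedge\inf_{u\in[s,t]}(-\psi_u)]$, which is exactly the form treated in Lemma \ref{SGP}. Since $\min$ and $\max$ are Lipschitz, they preserve $S_G^p$. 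Hence $A\in S_G^p(0,T)$, and so $X=S+A\in S_G^p(0,T)$.

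The main obstacle is showing that the individual processes $A^r,A^l$ lie in $S_G^p(0,T)$, and not only their difference. I would handle this through an explicit formula for each piece. One option is to use the representation of the constraining processes from \cite{Li}, which expresses $A^r$ through suprema and infima of $\phi$ and $\psi$ over random interlacing times. A cleaner route is to approximate on the dyadic grid as in the proof of Lemma \ref{SGP}. The approximations are built by applying the Skorokhod map to the piecewise data $S_{t\wedge t_i}$, $r^{-1}(t\wedge t_i,0)$ and $l^{-1}(t\wedge t_i,0)$, and each such approximation is a Lipschitz function of finitely many elements of $S_G^p$. I would then use a Proposition \ref{prop4.1}-type stability estimate for the individual constraining processes, bounding the modulus by the oscillations of $\alpha$ and $\beta$ over grid intervals. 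Proposition \ref{the3.7} sends that bound to $0$ in $S_G^p$. If a bound for the individual pieces is not available, a fallback is that $A^r$ and $A^l$ increase only on the disjoint closed sets $\{X=r^{-1}(\cdot,0)\}$ and $\{X=l^{-1}(\cdot,0)\}$, which are separated by the positive gap in (iii). With this, $A^r_t$ can be written as a sum over excursion pieces of increments of $A$, and I would control that sum via Proposition \ref{proposition3.4}.
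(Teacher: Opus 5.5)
Your first two paragraphs reproduce the paper's argument. The paper proves this theorem by rerunning the proof of Theorem \ref{thm4.1} with $B$ replaced by $S$: pathwise Skorokhod map, the representation of $A$, and Lemma \ref{SGP}. Your explicit reduction of both terms of the minimum to Lemma \ref{SGP} is, if anything, more careful than the paper's one-line appeal.

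The genuine gap is the step you flag yourself, namely $A^r,A^l\in S_G^p(0,T)$ in (b). The paper does not prove it either; its proof stops at $X,A\in S_G^p(0,T)$. None of your routes can supply it, because under the stated hypotheses it is false.

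Since $r(t,\cdot)$ and $l(t,\cdot)$ are strictly increasing, the problem is reflection of $S$ in the moving interval $[r^{-1}(t,0),l^{-1}(t,0)]$. The Skorokhod conditions force $dA^r$ and $dA^l$ to be carried by the disjoint closed sets $\{X=r^{-1}(\cdot,0)\}$ and $\{X=l^{-1}(\cdot,0)\}$. So $(A^r,A^l)$ is necessarily the Jordan decomposition of $A$, and $A^r_T+A^l_T$ is the total variation of $A$. That quantity is governed by the number $N$ of gap-sized oscillations of $S$ relative to the boundaries, not by $\sup_t|S_t|$. Consequently:
\begin{itemize}
\item There is no Proposition \ref{prop4.1}-type estimate for the individual pieces. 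On $[0,a]$, an input oscillating $N$ times between $-\varepsilon$ and $a+\varepsilon$ is $\varepsilon$-close in sup norm to one oscillating between $0$ and $a$, yet their pushing processes differ by about $2N\varepsilon$.
\item Your excursion fallback via Proposition \ref{proposition3.4} only yields bounds of order $N\sup_t|S_t-r^{-1}(t,0)|$. Nothing controls the moments of $N$ for a general $S\in S_G^p(0,T)$.
\end{itemize}

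Concretely, let $T=1$, $r(t,x)=x+\tfrac12$, $l(t,x)=x-\tfrac12$ (Assumption \ref{ass1} holds, with constant inverses $\mp\tfrac12$), and
\[
S_t=\sin\bigl(2\pi(t-\tfrac12)^+Y\bigr),\qquad Y=\exp\bigl(\exp(B_{1/2}^2)\bigr).
\]
\begin{itemize}
\item $S\in S_G^p(0,1)$ for every $p$. Replacing $Y$ by $Y\wedge n$ gives a bounded Lipschitz function of $(t,B_{t\wedge 1/2})$, hence an element $S^n\in S_G^0(0,1)$. Moreover $\sup_t|S_t-S^n_t|\le 2I_{\{Y>n\}}$ and $c(Y>n)\to0$.
\item $A^l$ is large. $S$ climbs from $-1$ to $1$ on at least $Y/2-2$ disjoint subintervals of $[\tfrac12,1]$. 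On each of them $X=S+A\in[-\tfrac12,\tfrac12]$ rises by at most $1$, so $A$ falls by at least $1$ and $A^l$ grows by at least $1$. Hence $A^l_1\ge Y/2-2$.
\item $Y$ has infinite $G$-expectation. Comparing with the constant-volatility $\bar\sigma$ scenario, $\hat{\mathbb{E}}[Y\wedge n]\ge E[\min(e^{e^{\bar\sigma^2Z^2/2}},n)]\to\infty$ for a standard normal $Z$.
\end{itemize}
Thus $A^l\notin S_G^1(0,1)$, although $|A|\le\tfrac32$ and $A\in S_G^p(0,1)$ exactly as you and the paper show. Oscillating boundaries break the corresponding claim in Theorem \ref{thm4.1} in the same way.

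What your argument and the paper's actually establish is the statement with (b) weakened to $A^r,A^l\in M_I(0,T)$. Recovering $A^r,A^l\in S_G^p(0,T)$ would need three further ingredients:
\begin{itemize}
\item an extra hypothesis giving $L^p$-control of $N$, for example $G$-It\^{o}-type moduli of continuity for $S$ and the boundaries;
\item pathwise continuity of the individual constraining maps at continuous inputs (they are only locally Lipschitz, with a constant growing with $N$);
\item a quasi-continuity argument.
\end{itemize}
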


\begin{remark}
(i) The pair $(X,A)$ in Theorem \ref{thm4.3} is called the solution of the reflected problem with constraints $l,r$ for $S$. Moreover, the pair $(A^r,A^l)$ will be referred to as the constraining processes corresponding to the reflected problem with constraints $l,r$ for $S$.

\noindent (ii) If $l= -\infty$ and $r(t,x)=x$, the process $X$ in Theorem \ref{thm4.1} is called a $G$-reflected Brownian motion on the half-line $[0,\infty)$ studied in \cite{Lin}.  

\noindent (iii) Recall that the process $S$ in the reflected problem considered in \cite{Lin} is usually required to be a $G$-It\^{o} process (see Theorem 4.3 in \cite{Lin}), or, at least, satisfies the following property
\begin{align*}
\hat{\mathbb{E}}[\sup_{u\in[s,t]}|S_u-S_s|^p]\leq C|t-s|^{p/2}, \ 0\leq s<t\leq T.
\end{align*}
Compared with the result in \cite{Lin}, our assumption, i.e., the input process $S$ belong to $S_G^p(0,T)$, is weaker. Besides, each component of the solution to the reflected problem belongs to $S_G^p(0,T)$, which enhances the results in \cite{Lin} since the solution to the reflected problem in \cite{Lin} belongs to $M_G^p(0,T)$.
\end{remark}

Actually, for the reflected problem with constraints $l,r$ for $S$, the objective of $A^r$ (resp. $A^l$) is to push the solution upwards such that $r(t,X_t)\geq 0$ (resp. to pull the solution downwards such that $l(t,X_t)\leq 0$). Now, consider two reflected problems with different constraints but same input process. If the lower constraint function is larger, then the force aiming to pull the solution downwards should be larger. If the upper constraint function is smaller, then the force aiming to push the solution upwards should be larger. For two reflected problems with same constraints but different input processes, if the input process is larger, then the force aiming to push the solution upwards should be smaller while the force aiming to pull the solution downwards should be larger. More precisely, we have the following comparison results for the individual constraining processes, which are based on the monotonicity properties for the deterministic case (Proposition \ref{proposition3.3} and \ref{proposition3.4}) and the pathwise construction for solutions to the reflected problem with nonlinear constraints.

\begin{proposition}
Suppose that $(l,r)$ and $(\tilde{l},\tilde{r})$ satisfy Assumption \ref{ass1}. Given $S\in S_G^p(0,T)$, let $(A^l,A^r)$ (resp., $(\tilde{A}^l,\tilde{A}^u)$) be the pair of constraining processes for the reflected problem with constraints $l,r$ (resp., $\tilde{l},\tilde{r}$) for $S$. If $r\geq \tilde{r}$ and $l\leq \tilde{l}$, then for any $t\in[0,T]$, 
\begin{displaymath}
A^l_t\leq \tilde{A}^l_t, \  A^r_t\leq \tilde{A}^r_t, \textrm{ q.s.}
\end{displaymath}
\end{proposition}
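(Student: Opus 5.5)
The plan is to reduce the statement to the deterministic monotonicity result, Proposition \ref{proposition3.3}, applied $\omega$ by $\omega$. This works because the reflected problem in Theorem \ref{thm4.3} is built pathwise from the Skorokhod problem.

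First, fix a polar set $N$ outside of which three things hold simultaneously. On $N^c$, (i)--(iii) of Assumption \ref{ass1} hold for both $(l,r)$ and $(\tilde l,\tilde r)$. The inequalities $r\geq\tilde r$ and $l\leq\tilde l$ hold for all $(t,x)$. And $S_\cdot(\omega)$ is continuous. Since a countable union of polar sets is polar, $N$ is polar. If the inequalities $r\ge\tilde r$, $l\le \tilde l$ are only given q.s. for each fixed $(t,x)$, I would first upgrade them to all $(t,x)$ simultaneously. To do this, use rational $(t,x)$, the continuity in $t$ from (i), and monotonicity in $x$ from (ii). This gives only one-sided limits in $x$, so the cleanest route is to argue through the inverses: $r\ge\tilde r$ pointwise implies $r^{-1}(t,0)\le\tilde r^{-1}(t,0)$. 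As the proof below shows, only the inverses enter the construction.

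Next, for $\omega\in N^c$, define $l_\omega,r_\omega,\tilde l_\omega,\tilde r_\omega$ as in \eqref{lomegaromega}. As in the proof of Theorem \ref{thm4.1}, these satisfy Assumption \ref{ass}. By uniqueness in Theorem \ref{thm1.1} and the pathwise construction underlying Theorem \ref{thm4.3}, we have the following identifications:
\begin{align*}
(X(\omega),A(\omega))&=\mathbb{SP}_{l_\omega}^{r_\omega}(S(\omega)),\\
(\tilde X(\omega),\tilde A(\omega))&=\mathbb{SP}_{\tilde l_\omega}^{\tilde r_\omega}(S(\omega)).
\end{align*}
The constraining pair $(A^r(\omega),A^l(\omega))$ coincides with the deterministic pair $(k^{h},k^{g})$ with $h=r_\omega$, $g=l_\omega$, and similarly for the tilde problem.

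Here one checks that the decomposition is the canonical one. The uniqueness in Theorem \ref{thm4.3} should be read as uniqueness of the constraining pair as well. To see this, note that $A^r$ can only increase on $\{r(t,X_t)=0\}$ and $A^l$ only on $\{l(t,X_t)=0\}$. These two sets are disjoint by the strict separation $l<r$. Hence the minimal decomposition of $A$ is forced, and it matches the one produced by Theorem \ref{thm1.1}.

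Then apply Proposition \ref{proposition3.3}. Take $g^1=l_\omega$, $h^1=r_\omega$, $g^2=\tilde l_\omega$, $h^2=\tilde r_\omega$; the hypotheses $h^1\ge h^2$ and $g^1\le g^2$ hold on $N^c$. The proposition gives $k^{2,h}_t\ge k^{1,h}_t$ and $k^{2,g}_t\ge k^{1,g}_t$ for all $t$. That is, $A^r_t(\omega)\le\tilde A^r_t(\omega)$ and $A^l_t(\omega)\le\tilde A^l_t(\omega)$ for all $t\in[0,T]$ and all $\omega\notin N$. This is the claim, in fact simultaneously in $t$, q.s.

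The main obstacle is the bookkeeping in the first step. One must make the inequalities between constraint functions hold outside a single polar set for all $(t,x)$. One must also make sure the constraining processes of the $G$-reflected problem are exactly the pathwise deterministic ones, rather than some other q.s.-equal version. I would handle the first point via countable dense sets together with continuity in $t$ and monotonicity in $x$. I would handle the second via the uniqueness part of Theorem \ref{thm1.1} applied pathwise.
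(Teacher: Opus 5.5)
Your proposal is correct and follows the route the paper intends. The paper gives no separate proof; it only notes that the result follows from applying the deterministic monotonicity result (Proposition \ref{proposition3.3}) $\omega$-by-$\omega$ to the pathwise construction behind Theorem \ref{thm4.3}, with Assumption \ref{ass1} taken to hold for every $\omega$ ``without loss of generality''. Your extra bookkeeping fills in details the paper leaves implicit. Identifying the constraining pair through the disjoint supports $\{r(t,X_t)=0\}$ and $\{l(t,X_t)=0\}$ (a Jordan decomposition argument) is sound. So is falling back on the inverses $r^{-1}(t,0)\le\tilde r^{-1}(t,0)$ and $l^{-1}(t,0)\ge\tilde l^{-1}(t,0)$ when the ordering is only q.s.; in that case you should apply Proposition \ref{proposition3.3} to the equivalent linear constraints $x-l^{-1}(t,0)$ and $x-r^{-1}(t,0)$, which give the same solution and the same constraining pair.
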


\begin{proposition}\label{prop3.7}
Let $S,\tilde{S}\in S_G^p(0,T)$ with $S_0=\tilde{S}_0$ and Assumption \ref{ass1} hold. Given $c_0,\tilde{c}_0\in\mathbb{R}$, let $(X,A)$, $(\tilde{X},\tilde{A})$ be the solution to the doubly reflected problem with constraints $l,r$ for $c_0+S$ and $\tilde{c}_0+\tilde{S}$, respectively. Moreover, let $(A^l,A^r)$, $(\tilde{A}^l,\tilde{A}^r)$ be the corresponding constraining processes. If there exists a nondecreasing function $\nu\in S_G^p(0,T)$ with $\nu_0=0$ such that $S=\tilde{S}+\nu$, then for each $t\in [0,T]$, the following relations hold: 
\begin{itemize}
\item[1.] $A^r_t-(c_0-\tilde{c}_0)^-\leq \tilde{A}^r_t\leq A^r_t+\nu_t+(c_0-\tilde{c}_0)^+$, q.s.;
\item[2.] $\tilde{A}^l_t-(c_0-\tilde{c}_0)^-\leq {A}^l_t\leq \tilde{A}^l_t+\nu_t+(c_0-\tilde{c}_0)^+$, q.s.
\end{itemize}
\end{proposition}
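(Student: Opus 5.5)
The plan is to reduce Proposition \ref{prop3.7} to the deterministic monotonicity result, Proposition \ref{proposition3.4}, applied path by path. This uses the pathwise construction of the solution given in the proof of Theorem \ref{thm4.1} and extended in Theorem \ref{thm4.3}.

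First, I would discard a polar set. Outside it:
\begin{itemize}
\item[(i)--(iii)] of Assumption \ref{ass1} hold for $\omega$;
\item $S(\omega),\tilde S(\omega)$ are continuous;
\item $\nu(\omega)$ is nondecreasing, continuous, with $\nu_0(\omega)=0$;
\item $S(\omega)=\tilde S(\omega)+\nu(\omega)$ holds on all of $[0,T]$.
\end{itemize}
This is possible because the relation $S=\tilde S+\nu$ for every $t$, together with continuity of paths, gives equality of whole paths q.s. Extend all processes constantly beyond $T$ so that they lie in $D[0,\infty)$.

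For such $\omega$, define $l_\omega,r_\omega$ as in \eqref{lomegaromega}; these satisfy Assumption \ref{ass}. By the uniqueness in Theorem \ref{thm1.1} and the construction in Theorem \ref{thm4.3}, the pair $(X(\omega),A(\omega))$ coincides with $\mathbb{SP}_{l_\omega}^{r_\omega}(c_0+S(\omega))$, and $(\tilde X(\omega),\tilde A(\omega))$ with $\mathbb{SP}_{l_\omega}^{r_\omega}(\tilde c_0+\tilde S(\omega))$. The constraining processes $A^r(\omega),A^l(\omega)$ correspond to $k^{h},k^{g}$ with $h=r_\omega$ and $g=l_\omega$.

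Next comes a normalization. Proposition \ref{proposition3.4} requires $s^1_0=s^2_0=0$. So I set $s^1=S(\omega)-S_0(\omega)$, $s^2=\tilde S(\omega)-\tilde S_0(\omega)$, $c^1_0=c_0+S_0(\omega)$ and $c^2_0=\tilde c_0+\tilde S_0(\omega)$. Since $S_0=\tilde S_0$, this gives $c^1_0-c^2_0=c_0-\tilde c_0$ and $s^1=s^2+\nu$.

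Applying Proposition \ref{proposition3.4} with index $1\leftrightarrow(X,A)$ and $2\leftrightarrow(\tilde X,\tilde A)$ yields, for all $t$,
\begin{itemize}
\item $A^r_t-(\tilde c_0-c_0)^+\le\tilde A^r_t\le A^r_t+\nu_t+(c_0-\tilde c_0)^+$;
\item $\tilde A^l_t-(\tilde c_0-c_0)^+\le A^l_t\le\tilde A^l_t+\nu_t+(c_0-\tilde c_0)^+$.
\end{itemize}
Using $(\tilde c_0-c_0)^+=(c_0-\tilde c_0)^-$, these are exactly items 1 and 2, holding outside a polar set.

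The main subtlety is the identification step. One must justify that the decomposition $A=A^r-A^l$ in Theorem \ref{thm4.3} is the deterministic decomposition $k=k^h-k^g$ of the Skorokhod problem, path by path. The Skorokhod condition $\int r\,dA^r=0$ corresponds to $\int I_{\{r>0\}}dk^h=0$ for continuous data, by Remark \ref{remark1}. The pathwise uniqueness of the constraining processes, implicit in Theorem \ref{thm1.1}, then gives the identification. A second point is the quasi-sure null-set bookkeeping: the possibly uncountably many time instants are handled by path continuity.
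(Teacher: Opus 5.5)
Your proposal is correct and takes the same route as the paper, which gives no written proof but states that the result follows from the deterministic monotonicity result (Proposition \ref{proposition3.4}) combined with the pathwise construction of Theorem \ref{thm4.3}. Your argument carries this out $\omega$ by $\omega$, and your normalization, which uses $S_0=\tilde S_0$ to absorb the initial values into $c^1_0,c^2_0$, is precisely what makes the hypothesis $s^1_0=s^2_0=0$ of that proposition applicable.
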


\section{Reflected $G$-SDEs with nonlinear constraints}

In this section, we study the reflected SDEs driven by $G$-Brownian motion with nonlinear constraints. We are given the following parameters: the initial value $x_0$, the constraints $l,r$ and the coefficients $f,h,g:[0,T]\times\Omega_T\times\mathbb{R}\rightarrow\mathbb{R}$. We propose the following assumptions on the coefficients, which are the same as those made for the non-reflected case.

\begin{assumption}\label{ass2}
\begin{itemize}
\item[(A1)] For some $p\geq 2$ and for each $x\in\mathbb{R}$, $f(\cdot,\cdot,x),h(\cdot,\cdot,x)\in M_G^p(0,T)$ and $g(\cdot,\cdot,x)\in H_G^p(0,T)$;
\item[(A2)] For any $(t,\omega)\in[0,T]\times\Omega$ and any $x,x'\in \mathbb{R}$, there exists a constant $L>0$, such that 
$$
|f(t,\omega,x)-f(t,\omega,x')|+|g(t,\omega,x)-g(t,\omega,x')|+|h(t,\omega,x)-h(t,\omega,x')|\leq L|x-x'|.
$$
\end{itemize}
\end{assumption}

The solution to doubly reflected $G$-SDE with parameters $(x_0,f,h,g,l,r)$ is a pair of processes $(X,A)$ such that 
\begin{itemize}
\item[(1)] $X\in S_G^p(0,T)$ and $A\in S_G^p(0,T)\cap M_{BV}(0,T)$ satisfy the following equation:
\begin{equation}\label{e5.1}
X_t=x_0+\int_0^t f(s,X_s)ds+\int_0^t h(x,X_s)d\langle B\rangle_s+\int_0^t g(s,X_s)dB_s+A_t, \ t\in[0,T];
\end{equation}
\item[(2)] $l(t, X_t)\leq 0\leq r(t,X_t)$, $t\in[0,T]$, q.s.;
\item[(3)] $A_{0-}=0$ and $A$ has the decomposition $A=A^r-A^l$, where $A^r,A^l\in M_I(0,T)\cap S_G^p(0,T)$ and
\begin{align*}
\int_0^T l(s,X_s)dA^l_s=\int_0^T r(s,X_s)dA^r_s=0, \textrm{ q.s.}
\end{align*}
\end{itemize}

\subsection{Existence and uniqueness result}

Now, we state the main result in this subsection, which indicates that there exists a unique solution to the reflected $G$-SDE with nonlinear constraints.

\begin{theorem}\label{thm5.3}
Suppose that Assumptions \ref{ass1} and Assumption \ref{ass2} hold. Then, the doubly reflected $G$-SDE with parameters $(x_0,f,h,g,l,r)$ admits a unique pair of solution $(X,A)$. 
\end{theorem}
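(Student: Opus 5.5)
The proof proceeds by a Picard iteration (equivalently, a contraction argument) built on the pathwise reflection map of Theorem \ref{thm4.3}, together with the Lipschitz estimate of Proposition \ref{prop4.1}. For $Y\in S_G^p(0,T)$ define
\begin{align*}
S^Y_t:=x_0+\int_0^t f(s,Y_s)ds+\int_0^t h(s,Y_s)d\langle B\rangle_s+\int_0^t g(s,Y_s)dB_s .
\end{align*}
By (A1)--(A2) and the BDG inequality (Proposition \ref{the1.3}), $S^Y\in S_G^p(0,T)$. Theorem \ref{thm4.3} then yields a unique $(X,A)=:\Gamma(Y)$ with $X=S^Y+A$, $X\in S_G^p(0,T)$, $A\in S_G^p(0,T)\cap M_{BV}(0,T)$, the constraint $l(t,X_t)\le 0\le r(t,X_t)$ and the Skorokhod conditions. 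Here Lemma \ref{SGP} is used inside Theorem \ref{thm4.3} to guarantee that $A$ is continuous and in $S_G^p$, not merely in $M_G^p$. A solution of the reflected $G$-SDE is exactly a fixed point $Y=X$ of $\Gamma$.

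\emph{Key estimate.} For $Y^1,Y^2$ with images $(X^i,A^i)$, apply Proposition \ref{prop4.1} pathwise with the same constraints $l_\omega,r_\omega$, so that $\bar g=\bar h=0$. This gives, for every $t\le T$,
\begin{align*}
\sup_{s\le t}|A^1_s-A^2_s|\le \sup_{s\le t}|S^{Y^1}_s-S^{Y^2}_s| .
\end{align*}
Proposition \ref{prop4.1} is stated on $[0,T]$, but its proof via the representation of Theorem \ref{thm1.1} is adapted and works on every $[0,t]$. Consequently
\begin{align*}
\sup_{s\le t}|X^1_s-X^2_s|\le 2\sup_{s\le t}|S^{Y^1}_s-S^{Y^2}_s| .
\end{align*}
Taking $\hat{\mathbb E}$ of the $p$-th power and using H\"older, the boundedness of $d\langle B\rangle/dt$ by $\bar\sigma^2$, BDG, and the Lipschitz condition (A2), we get
\begin{align*}
\hat{\mathbb{E}}\Big[\sup_{s\le t}|X^1_s-X^2_s|^p\Big]\le C\int_0^t \hat{\mathbb{E}}\Big[\sup_{u\le s}|Y^1_u-Y^2_u|^p\Big]ds .
\end{align*}

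\emph{Existence.} Start from $X^0\equiv x_0$ and set $X^{n+1}=\Gamma(X^n)$. Iterating the inequality gives
\begin{align*}
\hat{\mathbb{E}}\Big[\sup_{t\le T}|X^{n+1}_t-X^n_t|^p\Big]\le \frac{(CT)^n}{n!}\,\hat{\mathbb{E}}\Big[\sup_{t\le T}|X^1_t-X^0_t|^p\Big],
\end{align*}
so $X^n$ is Cauchy in $S_G^p(0,T)$ and has a limit $X$. Since $A^n=X^n-S^{X^{n-1}}$, the sequence $A^n$ is also Cauchy in $S_G^p$. The remaining issue is to check that the limit $(X,A)$ equals $\Gamma(X)$. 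By the key estimate, $\Gamma(X^n)\to\Gamma(X)$ in $S_G^p$, so $X=\lim X^{n+1}$ coincides with the first component of $\Gamma(X)$ q.s. Hence $X$ satisfies \eqref{e5.1} with $A=A^{\Gamma(X)}$, and all the membership, constraint and Skorokhod properties are inherited directly from Theorem \ref{thm4.3}. Passing to limits in the Skorokhod condition $\int l\,dA^l=0$ is therefore unnecessary.

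\emph{Uniqueness.} If $(X,A)$ and $(X',A')$ are two solutions, then by uniqueness in Theorem \ref{thm4.3} each is the image under $\Gamma$ of its own first component, so $X=\Gamma(X)$ and $X'=\Gamma(X')$. The key estimate then gives
\begin{align*}
\phi(t):=\hat{\mathbb E}\Big[\sup_{s\le t}|X_s-X'_s|^p\Big]\le C\int_0^t\phi(s)\,ds ,
\end{align*}
and Gronwall's lemma forces $\phi\equiv0$. It follows that $A=A'$.

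\emph{Main obstacle.} I expect the delicate step to be the pathwise Lipschitz bound on subintervals $[0,t]$ and its measurability and q.s. handling. The map $\Gamma$ is built $\omega$ by $\omega$, so the estimate must hold outside a single polar set, uniformly in $t$, and must be compatible with the $G$-stochastic integrals, which are only defined q.s. To handle this, I would fix versions of $S^{Y^i}$ that are continuous outside a common polar set and apply the deterministic Proposition \ref{prop4.1} on each $[0,t]$ there. A secondary point is to verify that $S^Y\in S_G^p$ (not just $M_G^p$) under (A1), which is needed to invoke Theorem \ref{thm4.3}. This follows because $G$-It\^o integrals of $M_G^p$ and $H_G^p$ integrands are limits in $S_G^p$ of integrals of simple integrands.
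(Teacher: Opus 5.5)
Your proposal is correct and follows the paper's route: a Picard iteration through the pathwise reflection map of Theorem \ref{thm4.3}, with Proposition \ref{prop4.1} (same constraints, so $\bar g=\bar h=0$) giving the Lipschitz estimate, and Gronwall's lemma giving uniqueness. The differences are cosmetic. You get the Cauchy property from the factorial bound $(CT)^n/n!$, whereas the paper uses the uniform bound $p(t)$ together with a $\limsup$/Fatou argument. You also derive uniqueness directly from your key estimate, while the paper goes through Proposition \ref{prop5.2} and \eqref{e5.7}.
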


\begin{remark}
Suppose that $l=-\infty$ and $r(t,x)=x-S_t$ with $S$ being a $G$-It\^{o} process. The reflected $G$-SDE with constraints $l,r$ degenerates into the reflected $G$-SDE studied in \cite{Lin}. Another frequently used constraint functions are given by $l(t,x)=x-\beta_t$, $r(t,x)=x-\alpha_t$, where $\alpha,\beta\in S_G^p(0,T)$ satisfy
\begin{align*}
\inf_{(t,\omega)\in[0,T]\times \Omega_T}(\beta_t(\omega)-\alpha_t(\omega))>0.
\end{align*}
We write the parameters of this kind of  doubly reflected $G$-SDEs as $(x_0,f,h,g,\alpha,\beta)$.
\end{remark}

We first establish some a priori estimates, which will be helpful to derive the uniqueness result for reflected $G$-SDEs with nonlinear constraints. In the following, $C$ will always represent a positive constant, which may vary from line to line.

\begin{proposition}\label{prop5.1}
Let $(X,A)$ be the solution to doubly reflected $G$-SDE with parameters $(x_0,f,h,g,l,r)$. Then, there exists a positive constant $C$ depending on $T,G,L,p$, such that 
\begin{align*}
&\hat{\mathbb{E}}\left[\sup_{t\in[0,T]}|X_t|^p\right]+\hat{\mathbb{E}}\left[\sup_{t\in[0,T]}|A_t|^p\right]\\
\leq C&\Bigg\{|x_0|^p+\hat{\mathbb{E}}\left[\int_0^T|f(t,0)|^pdt\right]+\hat{\mathbb{E}}\left[\int_0^T|h(t,0)|^pdt\right]+\hat{\mathbb{E}}\left[\left(\int_0^T|g(t,0)|^2 dt\right)^{\frac{p}{2}}\right] \\
&+\hat{\mathbb{E}}\left[\sup_{s\in [0,T]}|l^{-1}(s,0)|^p\right]+\hat{\mathbb{E}}\left[\sup_{s\in [0,T]}|r^{-1}(s,0)|^p\right]\Bigg\}.
\end{align*}
\end{proposition}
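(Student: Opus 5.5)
The plan is to exploit the pathwise representation of $A$ from Theorem \ref{thm1.1}, together with the Lipschitz continuity of the Skorokhod map in Proposition \ref{prop4.1}. This way we avoid any It\^{o}-type estimate on $A$ itself, which would be awkward because $A$ has no sign. Fix the solution $(X,A)$ and set
\begin{align*}
S_t:=x_0+\int_0^t f(s,X_s)ds+\int_0^t h(s,X_s)d\langle B\rangle_s+\int_0^t g(s,X_s)dB_s .
\end{align*}
Then for q.s.\ $\omega$, $(X(\omega),A(\omega))$ solves the Skorokhod problem $\mathbb{SP}_{l_\omega}^{r_\omega}(S(\omega))$, with $l_\omega,r_\omega$ as in \eqref{lomegaromega}. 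By uniqueness in Theorem \ref{thm1.1} it coincides with the pathwise solution, so $A$ is given by the explicit min/inf/sup formula with $\phi_t=l^{-1}(t,0)-S_t$ and $\psi_t=r^{-1}(t,0)-S_t$.

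\textbf{Step 1 (bound on $A$ on a subinterval).} Applying the representation on $[0,t]$, $A_u$ is obtained from $\phi,\psi$ by operations ($\min$, $\vee$, $\sup$, $\inf$, $-(\cdot)^-$) that are $1$-Lipschitz in the sup norm and map $\phi=\psi=0$ to $0$. Hence
\begin{align*}
\sup_{u\in[0,t]}|A_u|\leq \sup_{u\in[0,t]}|\phi_u|\vee\sup_{u\in[0,t]}|\psi_u|\leq \sup_{u\in[0,t]}|S_u|+\sup_{u\in[0,T]}\big(|l^{-1}(u,0)|+|r^{-1}(u,0)|\big).
\end{align*}
Equivalently, one can compare with the Skorokhod problem for the zero input with constraints $x\mapsto 0$ shifted, in the spirit of Proposition \ref{prop4.1}. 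Consequently $\sup_{u\le t}|X_u|\le 2\sup_{u\le t}|S_u|+\sup_{u}(|l^{-1}(u,0)|+|r^{-1}(u,0)|)$.

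\textbf{Step 2 (estimate $S$ and apply Gronwall).} Raise to the power $p$ and take $\hat{\mathbb{E}}$. Using H\"older for the $ds$ and $d\langle B\rangle_s$ integrals (recall $d\langle B\rangle_s\le\bar\sigma^2ds$), Proposition \ref{the1.3} for the stochastic integral, and (A2) to write $|f(s,X_s)|\le |f(s,0)|+L|X_s|$ and similarly for $h,g$, we get
\begin{align*}
\hat{\mathbb{E}}\Big[\sup_{u\le t}|X_u|^p\Big]\le C\Big\{\Lambda+\int_0^t\hat{\mathbb{E}}\Big[\sup_{v\le s}|X_v|^p\Big]ds\Big\}.
\end{align*}
Here $\Lambda$ is the bracket on the right-hand side of the proposition. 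For the $g$-term we use $\big(\int_0^t|g(s,X_s)|^2ds\big)^{p/2}\le C\big(\int_0^t|g(s,0)|^2ds\big)^{p/2}+C\int_0^t|X_s|^pds$, by H\"older since $p\ge2$. The function $t\mapsto\hat{\mathbb{E}}[\sup_{u\le t}|X_u|^p]$ is finite because $X\in S_G^p(0,T)$, so Gronwall's inequality yields the bound on $X$. The bound on $A$ then follows from $A=X-S$, or from Step 1, together with the already established bound on $\hat{\mathbb{E}}[\sup|S|^p]$.

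\textbf{Main obstacle.} The delicate point is Step 1. We must make sure the Lipschitz bound of the Skorokhod map holds on each $[0,t]$ and not just on $[0,T]$; this is needed because the Gronwall argument is run in $t$. This requires observing that the solution restricted to $[0,t]$ is the solution of the problem on $[0,t]$, which is non-anticipativity of the explicit formula. We must also verify that comparison with the trivial input (zero input, so that $\phi,\psi$ reduce to $l^{-1}(\cdot,0),r^{-1}(\cdot,0)$) indeed gives a constraining term bounded by $\sup|l^{-1}|\vee\sup|r^{-1}|$. If a direct reading of the formula is cleaner, I would instead bound $|A_u|$ by $\sup|\phi|\vee\sup|\psi|$ directly from the representation, checking each $\min/\max$ layer.
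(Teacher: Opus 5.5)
Your proposal is correct and follows essentially the paper's route: you identify $A(\omega)$ with the Skorokhod map applied to $S(\omega)$ and bound $A$ by $\sup|S|$ plus the sups of $|l^{-1}(\cdot,0)|$ and $|r^{-1}(\cdot,0)|$ using the Lipschitz property of the explicit representation, then finish with the BDG inequality, (A2) and Gronwall. The paper gets that bound on $A$ by comparing, via Proposition \ref{prop4.1}, with the zero-input solution $A'$ and combining the expectation estimates \eqref{e5.3} and \eqref{e5.4}; you read it directly off the formula (which also makes the restriction to $[0,t]$ immediate) and bound $\sup_{u\le t}|X_u|$ pathwise before taking expectations — a cosmetic, equivalent variation.
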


\begin{proof}
Since $(X,A)$ satisfy \eqref{e5.1}, it is easy to check that 
\begin{equation}\label{e5.3}\begin{split}
\hat{\mathbb{E}}\left[\sup_{s\in[0,t]}|X_s|^p\right]\leq C&\Bigg\{|x_0|^p+\hat{\mathbb{E}}\left[\sup_{s\in[0,t]}\left|\int_0^s f(u,X_u)du\right|^p\right]+\hat{\mathbb{E}}\left[\sup_{s\in[0,t]}\left|\int_0^s h(u,X_u)d\langle B\rangle_u\right|^p\right]\\
&+\hat{\mathbb{E}}\left[\sup_{s\in[0,t]}\left|\int_0^s g(u,X_u)dB_u\right|^p\right]+\hat{\mathbb{E}}\left[\sup_{s\in[0,t]}|A_s|^p\right]\Bigg\}\\
\leq C&\Bigg\{|x_0|^p+\hat{\mathbb{E}}\left[\int_0^t |f(u,0)|^p du\right]+\hat{\mathbb{E}}\left[\left(\int_0^t |g(u,0)|^2du\right)^{\frac{p}{2}}\right]\\
&+\hat{\mathbb{E}}\left[\int_0^t |h(u,0)|^p du\right]+\hat{\mathbb{E}}\left[\sup_{s\in[0,t]}|A_s|^p\right]+\hat{\mathbb{E}}\left[\int_0^t |X_s|^p ds\right]\Bigg\}\\
\leq C&\Bigg\{|x_0|^p+\hat{\mathbb{E}}\left[\int_0^t |f(u,0)|^p du\right]+\hat{\mathbb{E}}\left[\left(\int_0^t |g(u,0)|^2du\right)^{\frac{p}{2}}\right]\\
&+\hat{\mathbb{E}}\left[\int_0^t |h(u,0)|^p du\right]+\hat{\mathbb{E}}\left[\sup_{s\in[0,t]}|A_s|^p\right]+\int_0^t \hat{\mathbb{E}}\left[\sup_{u\in[0,s]}|X_u|^p \right]du\Bigg\},
\end{split}\end{equation}
where we have used Proposition \ref{the1.3} and the Lipschitz continuity of $f,g,h$ in the second inequality. Set 
\begin{displaymath}
\mathbb{X}_t=x_0+\int_0^t f(s,X_s)ds+\int_0^t h(s,X_s)d\langle B\rangle_s+\int_0^t g(s,X_s)dB_s.
\end{displaymath}
By the proof of Theorem \ref{thm4.1}, $A(\omega)$ can be regarded as the second component of the solution to the Skorokhod problem for $\mathbb{X}(\omega)$ with nonlinear constraints $l_\omega,r_\omega$, where $l_\omega,r_\omega$ is defined in \eqref{lomegaromega}. Let $(X',A')$ be the solution of the reflected problem with constraints $l,r$ for $0$. By the proof of Theorem \ref{thm4.1} again, we have 
\begin{align*}
A'_t=\min\left([-(l^{-1}(0,0))^-]\vee \sup_{u\in[0,t]}r^{-1}(u,0),\inf_{s\in[0,t]}\left[l^{-1}(s,0)\vee \sup_{u\in[s,t]}r^{-1}(u,0)\right] \right).
\end{align*} 
It follows that there exists a constant $C$, such that 
\begin{align*}
\hat{\mathbb{E}}\left[\sup_{s\in [0,t]}|A'_s|^p\right]\leq C\left\{\hat{\mathbb{E}}\left[\sup_{s\in [0,t]}|l^{-1}(s,0)|^p\right]+\hat{\mathbb{E}}\left[\sup_{s\in [0,t]}|r^{-1}(s,0)|^p\right]\right\}.
\end{align*}
Applying Proposition \ref{prop4.1} yields that 
\begin{equation}\label{e5.4}\begin{split}
\hat{\mathbb{E}}\left[\sup_{s\in[0,t]}|A_s|^p\right]=&\hat{\mathbb{E}}\left[\sup_{s\in[0,t]}|A_s-A'_s+A'_s|^p\right]\\
\leq &C\left\{\hat{\mathbb{E}}\left[\sup_{s\in[0,t]}|A'_s|^p\right]+\hat{\mathbb{E}}\left[\sup_{s\in[0,t]}|\mathbb{X}_s|^p\right]\right\}\\
\leq &C\Bigg\{|x_0|^p+\hat{\mathbb{E}}\left[\int_0^t |f(u,0)|^p du\right]+\hat{\mathbb{E}}\left[\left(\int_0^t |g(u,0)|^2du\right)^{\frac{p}{2}}\right]+\hat{\mathbb{E}}\left[\int_0^t |h(u,0)|^p du\right]\\
&+\int_0^t \hat{\mathbb{E}}\left[\sup_{u\in[0,s]}|X_u|^p \right]du+\hat{\mathbb{E}}\left[\sup_{s\in [0,t]}|l^{-1}(s,0)|^p\right]+\hat{\mathbb{E}}\left[\sup_{s\in [0,t]}|r^{-1}(s,0)|^p\right]\Bigg\}.
\end{split}\end{equation}
Combining \eqref{e5.3} and \eqref{e5.4}, we obtain that
\begin{align*}
\hat{\mathbb{E}}\left[\sup_{s\in[0,t]}|X_s|^p\right]\leq &C\Bigg\{|x_0|^p+\hat{\mathbb{E}}\left[\int_0^T |f(u,0)|^p du\right]+\hat{\mathbb{E}}\left[\left(\int_0^T |g(u,0)|^2du\right)^{\frac{p}{2}}\right]+\hat{\mathbb{E}}\left[\int_0^T |h(u,0)|^p du\right]\\
&+\int_0^t \hat{\mathbb{E}}\left[\sup_{u\in[0,s]}|X_u|^p \right]du+\hat{\mathbb{E}}\left[\sup_{s\in [0,T]}|l^{-1}(s,0)|^p\right]+\hat{\mathbb{E}}\left[\sup_{s\in [0,T]}|r^{-1}(s,0)|^p\right]\Bigg\}.
\end{align*}
Applying the Gronwall inequality indicates that 
\begin{align*}
\hat{\mathbb{E}}\left[\sup_{s\in[0,T]}|X_s|^p\right]\leq &C\Bigg\{|x_0|^p+\hat{\mathbb{E}}\left[\int_0^T |f(u,0)|^p du\right]+\hat{\mathbb{E}}\left[\left(\int_0^T |g(u,0)|^2du\right)^{\frac{p}{2}}\right]\\ &+\hat{\mathbb{E}}\left[\int_0^T |h(u,0)|^p du\right]
+\hat{\mathbb{E}}\left[\sup_{s\in [0,T]}|l^{-1}(s,0)|^p\right]+\hat{\mathbb{E}}\left[\sup_{s\in [0,T]}|r^{-1}(s,0)|^p\right]\Bigg\}.
\end{align*}
Plugging the above result into \eqref{e5.4}, we obtain the estimate for $A$. The proof is complete.
\end{proof}

\begin{proposition}\label{prop5.2}
Let $(x^i,f^i,h^i,g^i,l^i,r^i)$, $i=1,2$ be two sets of parameters satisfying Assumption \ref{ass1} and Assumption \ref{ass2}. Suppose that $(X^i,A^i)$ is the solution to reflected $G$-SDE with parameters $(x^i,f^i,h^i,g^i,l^i,r^i)$, $i=1,2$. Set 
\begin{align*}
\hat{x}:=x^1-x^2, \hat{f}:=f^1-f^2, \hat{h}:=h^1-h^2, \hat{g}:=g^1-g^2, \hat{X}:=X^1-X^2.
\end{align*}
Then, there exists a positive constant $C$ depending on $T,G,L,p$, such that
\begin{align*}
\hat{\mathbb{E}}\left[\sup_{t\in[0,T]}|\hat{X}_t|^p\right]\leq C&\Bigg\{|\hat{x}|^p+\hat{\mathbb{E}}\left[\int_0^T|\hat{f}(t,X^1_t)|^pdt\right]+\hat{\mathbb{E}}\left[\int_0^T |\hat{h}(t,X^1_t)|^pdt\right]\\
&+\hat{\mathbb{E}}\left[\left(\int_0^T|\hat{g}(t,X^1_t)|^2 dt\right)^{\frac{p}{2}}\right]+\hat{\mathbb{E}}[|\bar{l}_T|^p]+\hat{\mathbb{E}}[|\bar{r}_T|^p]\Bigg\},
\end{align*}
where 
\begin{align*}
\bar{l}_T=\sup_{t\in[0,T]}|(l^1)^{-1}(t,0)-(l^2)^{-1}(t,0)|, \  \bar{r}_T=\sup_{t\in[0,T]}|(r^1)^{-1}(t,0)-(r^2)^{-1}(t,0)|.
\end{align*}
\end{proposition}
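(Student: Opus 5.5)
We follow the scheme of Proposition \ref{prop5.1}, now applied to the difference of two solutions. For $i=1,2$ set
\begin{align*}
\mathbb{X}^i_t:=x^i+\int_0^t f^i(s,X^i_s)ds+\int_0^t h^i(s,X^i_s)d\langle B\rangle_s+\int_0^t g^i(s,X^i_s)dB_s ,
\end{align*}
so that $X^i=\mathbb{X}^i+A^i$. As in the proof of Proposition \ref{prop5.1}, for q.e.\ $\omega$ the function $A^i(\omega)$ is the second component of the solution to the Skorokhod problem $\mathbb{SP}_{l^i_\omega}^{r^i_\omega}(\mathbb{X}^i(\omega))$. Since $\hat{X}=\hat{\mathbb{X}}+\hat{A}$, with $\hat{\mathbb{X}}:=\mathbb{X}^1-\mathbb{X}^2$ and $\hat{A}:=A^1-A^2$, it suffices to bound $\hat{\mathbb{X}}$ and $\hat{A}$ separately.

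\textbf{Step 1: bound on $\hat{A}$.} Applying Proposition \ref{prop4.1} pathwise gives, for every $t\in[0,T]$,
\begin{align*}
\sup_{s\in[0,t]}|\hat{A}_s|\leq \sup_{s\in[0,t]}|\hat{\mathbb{X}}_s|+\bar{l}_T\vee\bar{r}_T .
\end{align*}
Consequently
\begin{align*}
\sup_{s\in[0,t]}|\hat{X}_s|\leq 2\sup_{s\in[0,t]}|\hat{\mathbb{X}}_s|+\bar{l}_T+\bar{r}_T ,
\end{align*}
and the problem reduces to estimating $\hat{\mathbb{E}}[\sup_{s\le t}|\hat{\mathbb{X}}_s|^p]$.

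\textbf{Step 2: bound on $\hat{\mathbb{X}}$.} Decompose each coefficient difference by adding and subtracting the coefficient evaluated at $X^1$:
\begin{align*}
f^1(s,X^1_s)-f^2(s,X^2_s)=\hat{f}(s,X^1_s)+f^2(s,X^1_s)-f^2(s,X^2_s),
\end{align*}
and similarly for $h$ and $g$. The first term is exactly the quantity appearing on the right-hand side of the statement. By the Lipschitz condition (A2), the second term is bounded by $L|\hat{X}_s|$. We then estimate each integral:
\begin{itemize}
\item for the $ds$ and $d\langle B\rangle$ integrals, use H\"older's inequality together with $d\langle B\rangle_s\leq\bar{\sigma}^2ds$;
\item for the stochastic integral, use the BDG inequality of Proposition \ref{the1.3}, followed by H\"older's inequality on the $|\hat{X}|^2$ part.
\end{itemize}
This yields
\begin{align*}
\hat{\mathbb{E}}\Big[\sup_{s\in[0,t]}|\hat{\mathbb{X}}_s|^p\Big]\leq C\Big\{|\hat{x}|^p+\Theta+\int_0^t\hat{\mathbb{E}}\Big[\sup_{u\in[0,s]}|\hat{X}_u|^p\Big]ds\Big\},
\end{align*}
where $\Theta$ collects the three terms involving $\hat{f},\hat{h},\hat{g}$ evaluated at $X^1$. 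Sublinearity of $\hat{\mathbb{E}}$ makes the splitting of the sum legitimate.

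\textbf{Step 3: Gronwall.} Raise the pathwise bound from Step 1 to the power $p$ and take $\hat{\mathbb{E}}$. Combined with Step 2, this gives
\begin{align*}
\varphi(t):=\hat{\mathbb{E}}\Big[\sup_{s\le t}|\hat{X}_s|^p\Big]\leq C\Big\{|\hat{x}|^p+\Theta+\hat{\mathbb{E}}[|\bar{l}_T|^p]+\hat{\mathbb{E}}[|\bar{r}_T|^p]\Big\}+C\int_0^t\varphi(s)ds .
\end{align*}
The function $\varphi$ is finite because $X^i\in S_G^p(0,T)$, so Gronwall's inequality applies and gives the claim.

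\textbf{Main obstacle.} The only nonroutine point is justifying the use of Proposition \ref{prop4.1} pathwise with $\omega$-dependent constraints, and using the sup over $[0,t]$ rather than $[0,T]$ so that Gronwall is applicable. This works because the proposition holds on any horizon $t$ in place of $T$, and the constraint term can be bounded crudely by the whole-horizon quantity $\bar{l}_T\vee\bar{r}_T$.
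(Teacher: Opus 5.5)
Your proposal is correct and follows essentially the same route as the paper. You identify $A^i(\omega)$ pathwise as the Skorokhod-problem component for $\mathbb{X}^i(\omega)$, bound $\hat A$ by $\hat{\mathbb{X}}$ plus $\bar l_T\vee\bar r_T$ via Proposition \ref{prop4.1}, estimate $\hat{\mathbb{X}}$ by adding and subtracting the coefficients at $X^1$ (Lipschitz, BDG, H\"older), and close with Gronwall. The only cosmetic difference is that you keep the $\hat A$ bound pathwise before taking $\hat{\mathbb{E}}$ and explicitly check that $\varphi$ is finite, whereas the paper passes to expectations first in \eqref{e5.7}.
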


\begin{proof}
 Set $\hat{\mathbb{X}}:=\mathbb{X}^1-\mathbb{X}^2$, where
\begin{displaymath}
\mathbb{X}^i_t=x^i+\int_0^t f^i(s,X^i_s)ds+\int_0^t h^i(s,X^i_s)d\langle B\rangle_s+\int_0^t g^i(s,X^i_s)dB_s, \ i=1,2.
\end{displaymath}
By Proposition \ref{prop4.1} and the construction in the proof of Theorem \ref{thm4.1}, we have
\begin{equation}\label{e5.7}
\hat{\mathbb{E}}\left[\sup_{s\in[0,t]}|\hat{A}_s|^p\right]
\leq C\left\{\hat{\mathbb{E}}[|\bar{l}_T|^p]+\hat{\mathbb{E}}[|\bar{r}_T|^p]+\hat{\mathbb{E}}\left[\sup_{s\in[0,t]}|\hat{\mathbb{X}}_s|^p\right]\right\}.
\end{equation}
Simple calculation yields that 
\begin{align*}
\hat{\mathbb{E}}\left[\sup_{s\in[0,t]}|\hat{\mathbb{X}}_s|^p\right]\leq &C\Bigg\{|\hat{x}|^p+\hat{\mathbb{E}}\left[\int_0^t|\hat{f}(s,X^1_s)|^pds\right]+\hat{\mathbb{E}}\left[\left(\int_0^t|\hat{g}(s,X^1_s)|^2 ds\right)^{\frac{p}{2}}\right]\\
&+\hat{\mathbb{E}}\left[\int_0^t|\hat{h}(s,X^1_s)|^pds\right]+\int_0^t \hat{\mathbb{E}}[|\hat{X}_s|^p]ds\Bigg\}.
\end{align*}
All the above analysis indicates that 
\begin{align*}
\hat{\mathbb{E}}\left[\sup_{s\in[0,t]}|\hat{X}_s|^p\right]=&\hat{\mathbb{E}}\left[\sup_{s\in[0,t]}|\hat{\mathbb{X}}_s+\hat{A}_s|^p\right]\\
\leq &C\Bigg\{|\hat{x}|^p+\hat{\mathbb{E}}\left[\int_0^T|\hat{f}(s,X^1_s)|^pds\right]+\hat{\mathbb{E}}\left[\left(\int_0^T|\hat{g}(s,X^1_s)|^2 ds\right)^{\frac{p}{2}}\right]\\
&+\hat{\mathbb{E}}\left[\int_0^T|\hat{h}(s,X^1_s)|^pds\right]+\hat{\mathbb{E}}[|\bar{l}_T|^p]+\hat{\mathbb{E}}[|\bar{r}_T|^p]+\int_0^t \hat{\mathbb{E}}\left[\sup_{u\in[0,s]}|\hat{X}_u|^p\right]ds\Bigg\}.
\end{align*}
The desired result follows from Gronwall's inequality.
\end{proof}

Now, we are in a position to prove the main result in this subsection.

\begin{proof}[Proof of Theorem \ref{thm5.3}]
The uniqueness follows easily from Proposition \ref{prop5.2} and Equation \eqref{e5.7}. It remains to prove the existence. The proof will be based on a Picard iteration method. To this end, set $X^0\equiv 0$. For each $n\in \mathbb{N}$, let $(X^{n+1},A^{n+1})$ be the solution to the reflected problem with constraints $l,r$ for $S^n$, where 
\begin{displaymath}
S^n_t=x_0+\int_0^t f(s,X^n_s)ds+\int_0^t h(x,X^n_s)d\langle B\rangle_s+\int_0^t g(s,X^n_s)dB_s, \ t\in[0,T].
\end{displaymath}
In order to make sure the existence of $(X^{n+1},A^{n+1})$, by Theorem \ref{thm4.3}, we need to show that $S^n\in S_G^p(0,T)$. It is sufficient to prove that $\hat{\mathbb{E}}[\sup_{s\in[0,T]}|X^n_s|^p]<\infty$. We claim that for any $n\in\mathbb{N}$, 
\begin{equation}\label{claim}
\hat{\mathbb{E}}\left[\sup_{s\in[0,t]}|X^n_s|^p\right]\leq p(t),  \ t\in[0,T],
\end{equation}
where $p(\cdot)$ is the solution to the following ordinary differential equation
\begin{align*}
p(t)=&C\Bigg\{|x_0|^p+\hat{\mathbb{E}}\left[\int_0^T|f(t,0)|^pdt\right]+\hat{\mathbb{E}}\left[\int_0^T|h(t,0)|^pdt\right] +\hat{\mathbb{E}}\left[\left(\int_0^T|g(t,0)|^2 dt\right)^{\frac{p}{2}}\right]\\
&+\hat{\mathbb{E}}\left[\sup_{t\in[0,T]}|l^{-1}(t,0)|^p\right]+\hat{\mathbb{E}}\left[\sup_{t\in[0,T]}|r^{-1}(t,0)|^p\right]+\int_0^t p(s)ds\Bigg\},
\end{align*}
and $C$ is the positive constant in \eqref{e5.7'} below. Clearly, \eqref{claim} holds for $n=0$. Suppose it hold when $n=k$. Hence, the pair of processes $(X^{k+1},A^{k+1})$ exists. Following the procedures in the proof of  Proposition \ref{prop5.1}, we have
\begin{equation}\label{e5.7'}\begin{split}
\hat{\mathbb{E}}\left[\sup_{s\in[0,t]}|X^{k+1}_s|^p\right]\leq &C\Bigg\{|x_0|^p+\hat{\mathbb{E}}\left[\int_0^T|f(t,0)|^pdt\right]+\hat{\mathbb{E}}\left[\int_0^T|h(t,0)|^pdt\right] +\hat{\mathbb{E}}\left[\left(\int_0^T|g(t,0)|^2 dt\right)^{\frac{p}{2}}\right]\\
&+\hat{\mathbb{E}}\left[\sup_{t\in[0,T]}|l^{-1}(t,0)|^p\right]+\hat{\mathbb{E}}\left[\sup_{t\in[0,T]}|r^{-1}(t,0)|^p\right]+\int_0^t \hat{\mathbb{E}}\left[\sup_{u\in[0,s]}|X^k_s|^p\right]ds\Bigg\},
\end{split}\end{equation}
which implies that 
\begin{displaymath}
\hat{\mathbb{E}}\left[\sup_{s\in[0,t]}|X^{k+1}_s|^p\right]\leq p(t),  \ t\in[0,T].
\end{displaymath}
Hence, the estimate \eqref{claim} holds true.

Now, for each $m,n\in\mathbb{N}$, we define
\begin{displaymath}
u_t^{n+1,m}:=\hat{\mathbb{E}}\left[\sup_{s\in[0,t]}|X^{n+m+1}_s-X^{n+1}_s|^p\right], \ t\in[0,T].
\end{displaymath}
By a similar analysis as the proof of Proposition \ref{prop5.2}, we have
\begin{displaymath}
u^{n+1,m}_t\leq C\int_0^t u^{n,m}_s ds.
\end{displaymath} 
Set $v_t^n:=\sup_{m\in\mathbb{N}}u_t^{n,m}$, $t\in[0,T]$. It is obvious that 
\begin{displaymath}
0\leq u^{n+1,m}_t\leq  C\sup_{m\in\mathbb{N}}\int_0^t u^{n,m}_s ds\leq C\int_0^t v^n_sds.
\end{displaymath}
It follows by taking supremum over all $m\in\mathbb{N}$ on the left-hand side that
\begin{displaymath}
0\leq v_t^n\leq C\int_0^t v_s^n ds.
\end{displaymath}
We define $V_t:=\limsup_{n\rightarrow \infty}v_t^n$, $t\in[0,T]$. By the estimate \eqref{claim}, there exists a constant $C$ independent of $n$, such that $v_t^n\leq Cp(t)$. Applying Fatou's lemma, we finally have
\begin{displaymath}
0\leq V_t\leq C\int_0^t V_s ds, 
\end{displaymath}
which implies that $V_t=0$, $t\in[0,T]$ by the Gronwall inequality. Therefore, $\{X^n\}_{n\in\mathbb{N}}$ is a Cauchy sequence under the norm $(\hat{\mathbb{E}}[\sup_{s\in[0,T]}|\cdot|^p])^{1/p}$. Let $X$ be the limit of $\{X^n\}_{n\in\mathbb{N}}$. Clearly, $X\in S_G^p(0,T)$. Set 
\begin{displaymath}
{S}_t:=x_0+\int_0^t f(s,X_s)ds+\int_0^t h(s,X_s)d\langle B\rangle_s+\int_0^t g(s,X_s)dB_s.
\end{displaymath}
Let $A$ be the second component of the solution to the reflected problem with nonlinear constraints $l,r$ for $S$. 
Similar with \eqref{e5.7}, we have
\begin{displaymath}
\hat{\mathbb{E}}\left[\sup_{t\in[0,T]}|A_t-A^n_t|^p\right]\leq C\hat{\mathbb{E}}\left[\sup_{s\in[0,T]}|S_t-S^{n-1}_t|^p\right]\leq C\hat{\mathbb{E}}\left[\sup_{s\in[0,T]}|X_t-X^{n-1}_t|^p\right].
\end{displaymath}
Consequently, we have
\begin{displaymath}
\lim_{n\rightarrow\infty}\hat{\mathbb{E}}\left[\sup_{t\in[0,T]}|A_t-A^n_t|^p\right]=0,
\end{displaymath}
which implies that $A\in S_G^p(0,T)\cap M_{BV}(0,T)$. Then, $(X,A)$ is the solution of doubly reflected $G$-SDE with parameters $(x_0,f,h,g,l,r)$.
\end{proof}

\begin{remark}\label{remarkbackward}
 The doubly reflected backward SDEs driven by $G$-Brownian motion has been studied in \cite{LN,LiS}. The formulation of the backward problem is quite different from the one for the forward problem. First, for the backward problem, we can only deal with the case of linear constraints. That is, the constraints take the following form
 \begin{align*}
 l(t,x)=x-\beta_t, \ r(t,x)=x-\alpha_t,
 \end{align*}
where $\alpha,\beta\in S_G^p(0,T)$ represents the lower and the upper obstacle, respectively. Besides, the upper obstacle needs to almost be a  ``generalized $G$-It\^{o} process". Second,  due to the appearance of nonincreasing $G$-martingale in the backward problem, the bounded variation process in the solution cannot be separated completely and the solution is defined through a so-called ``approximate Skorokhod condition". However, for the forward reflected $G$-SDEs, the existence is proved based on a pathwise construction and a Picard iteration method. Therefore, we could decompose the bounded variation process into the difference between two increasing processes satisfying the Skorokhod conditions.
\end{remark}

\subsection{Comparison theorem}
In this subsection, we establish a comparison theorem for reflected $G$-SDEs with nonlinear constraints. First, recalling the non-reflected case, if the coefficients of the $dt$-term and $d\langle B\rangle$-term (i.e., the coefficients of the ``drift term") are larger, then the solution to the $G$-SDE is larger (see Theorem 5.5 in \cite{LW}). It is natural to conjecture that this fact still holds for the reflected $G$-SDEs. For the effect of the constraints, consider a special case that the solution $X$ is required to lie between two prescribed processes $\alpha,\beta$, called the lower obstacle and the upper obstacle, respectively. In this case, $l,r$ are given as in Remark \ref{remarkbackward}.  A natural observation is that when the obstacles are larger, the corresponding solution is larger, which lead to the conjecture that the smaller the constraints, the larger the solution. Combining the effect of both the coefficients and the constraints, we have the following result.

\begin{theorem}\label{thm5.9}
Given two reflected $G$-BSDEs with parameters $(x^i,f^i,h^i,g^i,l^i,r^i)$, $i=1,2$ satisfying Assumption \ref{ass1} and Assumption \ref{ass2}, we additionally suppose in the following:
\begin{itemize}
\item[(1)] $x^1\leq x^2$, $g^1=g^2=g$;
\item[(2)] for each $x\in\mathbb{R}$, $f^1(t,x)\leq f^2(t,x)$, $h^1(t,x)\leq h^2(t,x)$, $l^2(t,x)\leq l^1(t,x)$, $r^2(t,x)\leq r^1(t,x)$, $t\in[0,T]$, q.s.;
\item[(3)] there exists a constant $c>0$, such that for $i=1,2$, $t\in[0,T]$ and $x,y\in\mathbb{R}$, the following inequality holds q.s.,
\begin{align}\label{lowerlip}
c|x-y|\leq |r^i(t,\omega,x)-r^i(t,\omega,y)|\vee| l^i(t,\omega,x)-l^i(t,\omega,y)|;
\end{align}
\item[(4)] for each $n\in\mathbb{N}$ and $i=1,2$, there exists a pair $(l^{i,n},r^{i,n})$ satisfying Assumption \ref{ass1} and \eqref{lowerlip}, such that $(l^{i,n,})^{-1}(\cdot,0)$ and $(r^{i,n})^{-1}(\cdot,0)$ are bounded, $l^{2,n}(t,x)\leq l^{1,n}(t,x)$,  $r^{2,n}(t,x)\leq r^{1,n}(t,x)$, for any $n\in\mathbb{N}$, $t\in[0,T]$, q.s., and 
\begin{align*}
\lim_{n\rightarrow \infty}\hat{\mathbb{E}}\left[\sup_{t\in[0,T]}|(l^i)^{-1}(t,0)-(l^{i,n})^{-1}(t,0)|^p\right]=\lim_{n\rightarrow \infty}\hat{\mathbb{E}}\left[\sup_{t\in[0,T]}|(r^i)^{-1}(t,x)-(r^{i,n})^{-1}(t,0)|^p\right]=0.
\end{align*}
\end{itemize}
Let $(X^i,A^i)$ be the solution to the reflected $G$-SDE with parameters $(x^i,f^i,h^i,g^i,l^i,r^i)$, $i=1,2$. Then, we have
\begin{displaymath}
X_t^1\leq X_t^2, \ t\in[0,T], \textrm{ q.s.}
\end{displaymath}
\end{theorem}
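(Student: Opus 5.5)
The plan is to apply the extended $G$-Itô formula (Theorem \ref{Gito}) to $\Phi(\hat X_t)$, where $\hat X:=X^1-X^2$ and $\Phi$ is a smooth approximation of $(x^+)^2$. The goal is to show $\hat{\mathbb{E}}[|\hat X_t^+|^2]\le C\int_0^t\hat{\mathbb{E}}[|\hat X_s^+|^2]ds$ and then conclude by Gronwall. Since Theorem \ref{Gito} needs bounded coefficients, moment bounds $\hat{\mathbb{E}}[K_T^p]<\infty$ for large $p$, and constraints satisfying \eqref{lowerlip}, I would first reduce to a regularized setting using hypothesis (4). I would also truncate $f^i,h^i,g$ (for instance replace them by $(-N)\vee f^i\wedge N$, which preserves the ordering $f^1\le f^2$ and the Lipschitz constant). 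For each $n$, let $X^{i,n}$ be the solution with constraints $(l^{i,n},r^{i,n})$ and truncated coefficients. By Proposition \ref{prop5.2} together with the convergence in (4), and convergence of the truncated coefficients in $M_G^p$, we get $X^{i,n}\to X^i$ in $S_G^p$. Hence it suffices to prove $X^{1,n}\le X^{2,n}$ q.s., because q.s. inequalities pass to the $S_G^p$ limit.

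For the regularized problem, boundedness of $(l^{i,n})^{-1}(\cdot,0)$ and $(r^{i,n})^{-1}(\cdot,0)$, combined with the explicit representation of $A$ in Theorem \ref{thm1.1} and the estimate of Proposition \ref{prop4.1}, gives $\sup_t|A^{i}_t|\le C(1+\sup_t|\mathbb{X}^i_t|)$ pathwise. With bounded coefficients, BDG (Proposition \ref{the1.3}) then yields all moments of $A^{i,r},A^{i,l}$. Here I expect to control the individual increasing parts, not just $A$. I would use the pathwise structure: between consecutive hitting times of the two separated boundaries only one constraint acts. Together with the separation in Assumption \ref{ass1}(iii) and a modulus-of-continuity bound on $\mathbb{X}$, this should give $\hat{\mathbb{E}}[(A^{i,r}_T)^q+(A^{i,l}_T)^q]<\infty$ for every $q$. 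By Remark (ii) after Theorem \ref{Gito}, condition \eqref{continuityproperty} is automatic. Part (i) of that remark allows the bounded-variation process $A^1-A^2$.

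The core step is the sign of the reflection term. Itô's formula gives
\begin{align*}
\Phi(\hat X_t)=&\Phi(\hat x)+\int_0^t\Phi'(\hat X_s)\big[(f^1(s,X^1_s)-f^2(s,X^2_s))ds+(h^1(s,X^1_s)-h^2(s,X^2_s))d\langle B\rangle_s\big]\\
&+\int_0^t\Phi'(\hat X_s)(g(s,X^1_s)-g(s,X^2_s))dB_s+\tfrac12\int_0^t\Phi''(\hat X_s)|g(s,X^1_s)-g(s,X^2_s)|^2d\langle B\rangle_s\\
&+\int_0^t\Phi'(\hat X_s)(dA^{1,r}_s-dA^{1,l}_s-dA^{2,r}_s+dA^{2,l}_s).
\end{align*}
Take $\Phi'\ge0$, supported on $\{x>0\}$, and consider times where $X^1_s>X^2_s$.

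On the support of $dA^{1,r}$ we have $r^1(s,X^1_s)=0$. By (2) and strict monotonicity, $r^2(s,X^2_s)<r^2(s,X^1_s)\le r^1(s,X^1_s)=0$, which contradicts the constraint on $X^2$. Hence $\int\Phi'(\hat X)dA^{1,r}=0$. Symmetrically, on the support of $dA^{2,l}$ we have $l^2(s,X^2_s)=0$, so $l^1(s,X^1_s)>l^1(s,X^2_s)\ge l^2(s,X^2_s)=0$, again a contradiction, and $\int\Phi'dA^{2,l}=0$. The remaining terms $-dA^{1,l}$ and $-dA^{2,r}$ are nonpositive.

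The drift terms are bounded by using $f^1\le f^2$, $h^1\le h^2$ and Lipschitz continuity, giving a bound $CL\hat X^+$ times $\Phi'$. Taking $\hat{\mathbb{E}}$, the $dB$ integral is a symmetric $G$-martingale with zero expectation. The $d\langle B\rangle$ terms are handled by $\hat{\mathbb{E}}[\int\eta\, d\langle B\rangle]\le\bar\sigma^2\hat{\mathbb{E}}[\int\eta^+ds]$. Letting $\Phi\to(x^+)^2$ gives the Gronwall inequality above, with $\hat x^+=0$ by (1).

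The main obstacle I anticipate is the integrability and approximation step: showing that the individual constraining processes satisfy the moment condition required by Theorem \ref{Gito}, and justifying the limit passage. Condition \eqref{lowerlip} should enter here, since it converts the pathwise Skorokhod bounds stated in terms of $l^{-1},r^{-1}$ into bounds on $X$ and the constraining processes.
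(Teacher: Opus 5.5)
Your overall architecture is the paper's. There is a first step with bounded $f^i,h^i,g$ and bounded $(l^i)^{-1}(\cdot,0)$, $(r^i)^{-1}(\cdot,0)$, handled by the extended $G$-It\^o formula and Gronwall. Then the coefficients are truncated, the constraints are replaced by the pairs $(l^{i,n},r^{i,n})$ of hypothesis (4), and one passes to the limit via Proposition \ref{prop5.2}. Working with smoothed $(x^+)^2$ rather than the paper's $(x^+)^3$ (already $C^2$) is cosmetic.

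The real difference is how the reflection terms are signed. The paper uses \eqref{lowerlip}: on $\{X^1_t>X^2_t\}$ it bounds $\hat X_t\le c^{-1}r^1(t,X^1_t)$ and $\hat X_t\le -c^{-1}l^2(t,X^2_t)$. This gives $\int_0^t(\hat X^+_s)^2dA^{1,r}_s\le c^{-2}\int_0^t(r^1(s,X^1_s))^2dA^{1,r}_s=0$, and likewise for $A^{2,l}$. You instead show that $\{X^1>X^2\}$ is $dA^{1,r}$-null and $dA^{2,l}$-null, since otherwise strict monotonicity and hypothesis (2) would give $r^2(s,X^2_s)<0$ or $l^1(s,X^1_s)>0$. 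That argument is correct and purely qualitative, and it buys you something: hypothesis (3), and the matching requirement in (4), are never used. The paper's chain of inequalities in fact needs lower-Lipschitz bounds on $r^2$ and on $l^1$ individually, which \eqref{lowerlip}, being a bound on a maximum, does not literally supply. So your expectation that \eqref{lowerlip} must enter through integrability is misplaced: in the paper it enters only in the sign step, and in your route not at all.

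The piece of your plan I would drop is the attempt to prove $\hat{\mathbb{E}}[(A^{i,r}_T)^q+(A^{i,l}_T)^q]<\infty$ for every $q$. The paper never bounds the individual constraining processes. It only records $\hat{\mathbb{E}}[\sup_t|A^i_t|^p]<\infty$ from Proposition \ref{prop5.1} and applies the bounded-variation form of Theorem \ref{Gito} with $K=A^1-A^2$.

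Your sketch would not deliver that bound anyway. The number of alternations between the two constraints is governed by the oscillation of $(l^{i,n})^{-1}(\cdot,0)-\mathbb{X}^i$ and $(r^{i,n})^{-1}(\cdot,0)-\mathbb{X}^i$. Hence it depends on the modulus of continuity of the boundaries, and boundedness plus membership in $S_G^p(0,T)$ gives no moment control over that. For example, take $t_1\in(0,T)$, $(r^{i,n})^{-1}(t,0)=2\sin(e^{B_{t_1}^2}(t-t_1)^+)$ and $(l^{i,n})^{-1}(t,0)=(r^{i,n})^{-1}(t,0)+1$. These are admissible bounded boundaries, yet they force $A^{i,r}_T+A^{i,l}_T$ to grow roughly like $e^{B_{t_1}^2}$, which lacks high $G$-moments.

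Nothing is lost by dropping it, because the reflection terms are removed pathwise by their sign before any expectation is taken. If you want to sidestep the hypotheses of Theorem \ref{Gito} altogether, apply the classical It\^o formula under each $P\in\mathcal{P}$, where the bounded-variation part needs no integrability. The expectation step then involves only moments of $\hat X$.
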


\begin{proof}
\textbf{Step 1.} We first prove the case when $f^i,h^i,g$, $r^{i,-1}(\cdot,0)$ and $l^{i,-1}(\cdot,0)$ are bounded, $i=1,2$.

By Proposition \ref{prop5.1} and applying the boundedness of $f^i,h^i,g$, $r^{i,-1}(\cdot,0)$ and $l^{i,-1}(\cdot,0)$, we have $\hat{\mathbb{E}}[\sup_{t\in[0,T]}|A_t^i|^p]<\infty$ for any $p\geq 2$. 
Since $A^i\in S_G^p(0,T)\subset S_G^2(0,T)$, by Proposition \ref{the3.7}, for each $t\in[0,T]$, we have $\lim_{s\rightarrow t}\hat{\mathbb{E}}[|A^i_t-A^i_s|^2]=0$, $i=1,2$. Set $\hat{X}_t=X^1_t-X_t^2$. Then, we may apply the extended $G$-It\^{o}'s formula (see Theorem \ref{Gito}) to $(\hat{X}_t^+)^3$ to obtain that
\begin{equation}\begin{split}\label{e5.9}
(\hat{X}_t^+)^3=&3\int_0^t (\hat{X}_s^+)^2(f^1(s,X^1_s)-f^2(s,X^2_s))ds+3\int_0^t (\hat{X}_s^+)^2(h^1(s,X^1_s)-h^2(s,X^2_s))d\langle B\rangle_s\\
&+3\int_0^t (\hat{X}_s^+)^2(g(s,X^1_s)-g(s,X^2_s))dB_s+3\int_0^t (\hat{X}_s^+)^2d(A^1_s-A^2_s)\\
&+3\int_0^t \hat{X}_s^+(g(s,X^1_s)-g(s,X^2_s))^2d\langle B\rangle_s\\
\leq &3(L+\bar{\sigma}^2 L+\bar{\sigma}^2 L^2)\int_0^t (\hat{X}_s^+)^3ds+3\int_0^t (\hat{X}_s^+)^2d(A^1_s-A^2_s)\\
&+3\int_0^t (\hat{X}_s^+)^2(g(s,X^1_s)-g(s,X^2_s))dB_s.
\end{split}\end{equation}
Let $(A^{i,l},A^{i,r})$ be the constraining processes associated with reflected $G$-SDE $(x^i,f^i,h^i,g^i,l^i,r^i)$, $i=1,2$. It is easy to check that on the set $\{X_t^1>X^2_t\}$,
\begin{align*}
&X^1_t-X^2_t\leq \frac{1}{c}(r^2(t,X^1_t)-r^2(t,X^2_t))\leq \frac{1}{c}r^2(t,X^1_t)\leq \frac{1}{c} r^1(t,X^1_t),\\
&X^1_t-X^2_t\leq \frac{1}{c}(l^1(t,X^1_t)-l^1(t,X^2_t))\leq -\frac{1}{c}l^1(t,X^2_t)\leq -\frac{1}{c} l^2(t,X^2_t).
\end{align*}
It follows that 
\begin{align*}
\int_0^t (\hat{X}_s^+)^2d(A^1_s-A^2_s)=&\int_0^t (\hat{X}_s^+)^2dA^{1,r}_s-\int_0^t (\hat{X}_s^+)^2dA^{1,l}_s\\
&-\int_0^t (\hat{X}_s^+)^2dA^{2,r}_s+\int_0^t (\hat{X}_s^+)^2dA^{2,l}_s\\
\leq &\int_0^t \frac{1}{c^2}(r^1(s,X_s^1))^2dA^{1,r}_s-\int_0^t (\hat{X}_s^+)^2dA^{1,l}_s\\
&-\int_0^t (\hat{X}_s^+)^2dA^{2,r}_s+\int_0^t \frac{1}{c^2}(l^2(s,X^2_s))^2dA^{2,l}_s\\
\leq &-\int_0^t (\hat{X}_s^+)^2dA^{1,l}_s-\int_0^t (\hat{X}_s^+)^2dA^{2,r}_s\leq 0.
\end{align*}
Putting the above result into \eqref{e5.9} and then taking expectations on both sides yield that 
\begin{align*}
\hat{\mathbb{E}}[(\hat{X}_t^+)^3]\leq C\hat{\mathbb{E}}\left[\int_0^t (\hat{X}_s^+)^3 ds\right]\leq C\int_0^t \hat{\mathbb{E}}[(\hat{X}_s^+)^3]ds.
\end{align*}
Using the Gronwall inequality, we obtain the desired result.

\textbf{Step 2.} Now, we are in a position to prove the comparison property for the general case. For each fixed $n\in\mathbb{N}$, we define
\begin{align*}
	m^n(t,x):=((-n)\vee l(t,x))\wedge n, \ m=f^i,h^i,g, \ (t,x)\in[0,T]\times\mathbb{R},\  i=1,2.
\end{align*}
Let $((X^i)^n,(A^i)^n)$ be the solution of reflected $G$-SDE with initial value $x^i$, coefficients $(f^i)^n$, $(h^i)^n$, $g^n$ and nonlinear constraints $l^{i,n},r^{i,n}$, $i=1,2$. The result in the first step indicates that for each $n\in\mathbb{N}$,
\begin{equation}\label{e5.11}
	(X^1)^n_t\leq (X^2)^n_t, \ t\in[0,T], \textrm{ q.s.}
\end{equation}
Recalling Theorem \ref{prop5.2}, we have
\begin{align*}
	\hat{\mathbb{E}}\left[\sup_{t\in[0,T]}|(\hat{X}^i)^n_t|^p\right]\leq C&\Bigg\{\hat{\mathbb{E}}\left[\int_0^T|(\hat{f}^i)_t^n|^pdt\right]+\hat{\mathbb{E}}\left[\left(\int_0^T |\hat{g}^n_t|^2dt\right)^{p/2}\right]\\
	&+\hat{\mathbb{E}}\left[\int_0^T|(\hat{h}^i)^n_t|^p dt\right]+\hat{\mathbb{E}}[|(\bar{l}^i)^n_T|^p]+\hat{\mathbb{E}}[|(\bar{r}^i)^n_T|^p]\Bigg\},
\end{align*}
where for $i=1,2$,
\begin{align*}
	&(\hat{X}^i)^n_t=(X^i)^n_t-X^i_t, \ \hat{g}^n_t=g^n(t,X^i_t)-g(t,X^i_t),\\
	&(\hat{m}^i)^n_t=(m^i)^n(t,X^i_t)-m^i(t,X^i_t), \ m=f,h,\\
	&(\bar{l}^i)^n_T=\sup_{t\in[0,T]}|(l^{i,n})^{-1}(t,0)-(l^i)^{-1}(t,0)|,\\
	&(\bar{r}^i)^n_T=\sup_{t\in[0,T]}|(r^{i,n})^{-1}(t,0)-(r^i)^{-1}(t,0)|.
\end{align*}
It is easy to check that 
\begin{align*}
	\hat{\mathbb{E}}\left[\int_0^T|(\hat{f}^i)_t^n|^pdt\right]\leq &\hat{\mathbb{E}}\left[\int_0^T |f^i(t,X^i_t)|^p I_{\{ |f^i(t,X_t^i)|>n \}}dt\right]\\
	\leq &\hat{\mathbb{E}}\left[\int_0^T( |f^i(t,0)|+L|X^i_t|)^p I_{\{ |f^i(t,0)|+L|X^i_t|>n \}}dt\right]\\
	\leq & C\left\{\hat{\mathbb{E}}\left[\int_0^T |f^i(t,0)|^p I_{\{ |f^i(t,0)|>\frac{n}{2} \}}dt\right] +\hat{\mathbb{E}}\left[\int_0^T |X^i_t|^p I_{\{ L|X^i_t|>\frac{n}{2} \}}dt\right]\right \}.
\end{align*}
Noting that $f^i,X^i\in M_G^p(0,T)$, by Theorem \ref{thm4.7}, we have 
\begin{displaymath}
	\lim_{n\rightarrow \infty}	\hat{\mathbb{E}}\left[\int_0^T|(\hat{f}^i)_t^n|^pdt\right]=0.
\end{displaymath}
Similarly, we have 
\begin{displaymath}
		\lim_{n\rightarrow \infty}	\hat{\mathbb{E}}\left[\left(\int_0^T |\hat{g}^n_t|^2dt\right)^{p/2}\right]=	\lim_{n\rightarrow \infty}	\hat{\mathbb{E}}\left[\int_0^T|(\hat{h}^i)_t^n|^pdt\right]=0.
\end{displaymath}
All the above analysis implies that 
\begin{align*}
	\lim_{n\rightarrow\infty}\hat{\mathbb{E}}\left[\sup_{t\in[0,T]}|(X^i)^n_t-X^i_t|^p\right]=0.
\end{align*}
Letting $n$ approach infinity in \eqref{e5.11}, we finally obtain the desired result.
\end{proof}

For $i=1,2$, let $l^i,r^i$ be given as follows
\begin{align}\label{liri}
l^i(t,x)=x-\beta^i_t, \ r^i(t,x)=x-\alpha^i_t,
\end{align}
where $\alpha^i,\beta^i\in S_G^p(0,T)$ satisfy $\inf_{(t,\omega)\in [0,T]\times \Omega_T}(\beta^i_t(\omega)-\alpha^i_t(\omega))>0$ and $\alpha^1_t\leq \alpha^2_t$, $\beta^1_t\leq \beta^2_t$, $t\in[0,T]$. Then, we claim that  $(l^i,r^i)$ satisfies conditions (3), (4) in Theorem \ref{thm5.9}. In fact, condition (3) is easy to verify. It remains to construct sequences $\{l^{i,n}\}_{n\in\mathbb{N}}$ and $\{r^{i,n}\}_{n\in\mathbb{N}}$ satisfying condition (4). For any $n\in\mathbb{N}$, set 
\begin{align*}
l^{i,n}(t,x)=x-\frac{1}{n}-\beta^{i,n}_t, \ r^{i,n}(t,x)=x+\frac{1}{n}-\alpha^{i,n}_t,
\end{align*}
where $\beta^{i,n}_t=(\beta^i_t\wedge n)\vee(-n)$ and $\alpha^{i,n}_t=(\alpha^i_t\wedge n)\vee(-n)$. For simplicity, we omit the superscript $i$. Simple calculation yields that 
\begin{align*}
(l^n)^{-1}(t,0)=\frac{1}{n}+\beta^n_t, \ (r^n)^{-1}(t,0)=-\frac{1}{n}+\alpha^n_t,
\end{align*}
which are bounded and satisfy 
\begin{align*}
\inf_{(t,\omega)\in [0,T]\times \Omega_T}((l^n)^{-1}(t,\omega,0)-(r^n)^{-1}(t,\omega,0))>0.
\end{align*}
Therefore, $(l^n,r^n)$ satisfy Assumption \ref{ass1} and Equation \eqref{lowerlip}. Note that 
\begin{align*}
\hat{\mathbb{E}}\left[\sup_{t\in[0,T]}|l^{-1}(t,0)-(l^{n})^{-1}(t,0)|^p\right]&\leq C\left\{\frac{1}{n^p}+\hat{\mathbb{E}}\left[\sup_{t\in[0,T]}|\beta^n_t-\beta_t|^p\right]\right\}\\
&\leq C\left\{\frac{1}{n^p}+\hat{\mathbb{E}}\left[\sup_{t\in[0,T]}|\beta_t|^p I_{\{\sup_{t\in[0,T]}|\beta_t|>n\}}\right]\right\}.
\end{align*}
Since we have $\sup_{t\in[0,T]}|\beta_t|\in L_G^p(\Omega_T)$, applying Theorem \ref{LGp} yields that
\begin{align*}
\lim_{n\rightarrow \infty}\hat{\mathbb{E}}\left[\sup_{t\in[0,T]}|l^{-1}(t,0)-(l^{n})^{-1}(t,0)|^p\right]=0.
\end{align*}
Similarly, we have
\begin{align*}
\lim_{n\rightarrow \infty}\hat{\mathbb{E}}\left[\sup_{t\in[0,T]}|r^{-1}(t,0)-(r^{n})^{-1}(t,0)|^p\right]=0.
\end{align*}
Therefore, the pair $(l^i,r^i)$ satisfies condition (3) in Theorem \ref{thm5.9}. Combining the above analysis and Theorem \ref{thm5.9}, we obtain the comparison property, which extends Theorem 5.9 in \cite{Lin} to the case of double obstacles.

\begin{corollary}
Given two reflected $G$-BSDEs with parameters $(x^i,f^i,h^i,g^i,\alpha^i,\beta^i)$, $i=1,2$ satisfying  Assumption \ref{ass2}, we additionally suppose in the following:
\begin{itemize}
\item[(1)] $x^1\leq x^2$, $g^1=g^2=g$;
\item[(2)] for $i=1,2$, $\alpha^i,\beta^i\in S_G^p(0,T)$ satisfy 
$$\inf_{(t,\omega)\in [0,T]\times \Omega_T}(\beta^i_t(\omega)-\alpha^i_t(\omega))>0;$$
\item[(3)] for each $x\in\mathbb{R}$, $f^1(t,x)\leq f^2(t,x)$, $h^1(t,x)\leq h^2(t,x)$, $\alpha^1_t\leq \alpha^2_t$, $\beta^1_t\leq \beta^2_t$, $t\in[0,T]$, q.s.;
\end{itemize}
Let $(X^i,A^i)$ be the solution to the reflected $G$-SDE with parameters $(x^i,f^i,h^i,g^i,\alpha^i,\beta^i)$, $i=1,2$. Then, we have
\begin{displaymath}
X_t^1\leq X_t^2, \ t\in[0,T], \textrm{ q.s.}
\end{displaymath}
\end{corollary}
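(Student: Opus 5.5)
The corollary follows by applying Theorem \ref{thm5.9} to the linear constraints \eqref{liri}, $l^i(t,x)=x-\beta^i_t$ and $r^i(t,x)=x-\alpha^i_t$. The plan is to check each hypothesis of Theorem \ref{thm5.9} in turn.

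First, Assumption \ref{ass1} holds for $(l^i,r^i)$.
\begin{itemize}
\item[(i)] Continuity in $t$ holds q.s., since $\alpha^i,\beta^i\in S_G^p(0,T)$ have q.s. continuous paths.
\item[(ii)] Strict monotonicity in $x$ is immediate, and $r^i(t,x)-l^i(t,x)=\beta^i_t-\alpha^i_t>0$ by hypothesis (2).
\item[(iii)] We have $(r^i)^{-1}(t,0)=\alpha^i_t$ and $(l^i)^{-1}(t,0)=\beta^i_t$. Both lie in $S_G^p(0,T)$, and the separation condition is exactly the standing assumption $\inf(\beta^i-\alpha^i)>0$.
\end{itemize}

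Next, conditions (1)--(2) of Theorem \ref{thm5.9}. Condition (1) is hypothesis (1) verbatim. For the ordering in (2), $\alpha^1\le\alpha^2$ gives $r^2(t,x)=x-\alpha^2_t\le x-\alpha^1_t=r^1(t,x)$. Likewise, $\beta^1\le\beta^2$ gives $l^2\le l^1$. Together with $f^1\le f^2$ and $h^1\le h^2$, this settles (2).

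Then conditions (3)--(4). Condition (3) holds with $c=1$, since $|r^i(t,x)-r^i(t,y)|=|x-y|$. For (4), I use the truncated constraints constructed just before the corollary:
\[
l^{i,n}(t,x)=x-\tfrac1n-\beta^{i,n}_t,\qquad r^{i,n}(t,x)=x+\tfrac1n-\alpha^{i,n}_t,
\]
with $\beta^{i,n}_t=(\beta^i_t\wedge n)\vee(-n)$ and $\alpha^{i,n}_t=(\alpha^i_t\wedge n)\vee(-n)$.
\begin{itemize}
\item Their inverses at $0$ are $\beta^{i,n}_t+\frac1n$ and $\alpha^{i,n}_t-\frac1n$. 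These are bounded, belong to $S_G^p(0,T)$ (truncation is Lipschitz), and are separated by at least $\frac2n$, because truncation preserves $\beta\ge\alpha$. So Assumption \ref{ass1} and \eqref{lowerlip} hold.
\item Truncation is monotone, so $\alpha^{1,n}\le\alpha^{2,n}$ and $\beta^{1,n}\le\beta^{2,n}$. Hence $r^{2,n}\le r^{1,n}$ and $l^{2,n}\le l^{1,n}$.
\item For convergence, $\sup_t|\beta^{i,n}_t-\beta^i_t|\le\sup_t|\beta^i_t|I_{\{\sup_t|\beta^i_t|>n\}}$. Since $\sup_t|\beta^i_t|\in L_G^p(\Omega_T)$, the uniform integrability in Theorem \ref{LGp} makes its $p$-th moment tend to $0$. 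The $\frac1n$ shift contributes only $n^{-p}$, and the same argument applies to $\alpha^i$.
\end{itemize}

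With every hypothesis verified, Theorem \ref{thm5.9} yields $X^1_t\le X^2_t$ q.s. There is no real obstacle here. The only point needing a little care is that the truncated pair keeps both the ordering between $i=1,2$ and the strict separation, and the $\pm\frac1n$ shift guarantees the latter.
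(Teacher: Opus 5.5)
Your proposal is correct and follows the paper's argument: you verify the hypotheses of Theorem~\ref{thm5.9} for $l^i(t,x)=x-\beta^i_t$ and $r^i(t,x)=x-\alpha^i_t$, with condition (3) holding for $c=1$, and you obtain condition (4) from the same truncations $l^{i,n},r^{i,n}$ with the $\pm\frac1n$ shift together with the uniform integrability in Theorem~\ref{LGp}. You also make explicit some checks the paper leaves implicit, namely Assumption~\ref{ass1} for the original pairs and the ordering $l^{2,n}\le l^{1,n}$, $r^{2,n}\le r^{1,n}$ via monotonicity of truncation.
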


In the following, we consider the monotonicity of the individual constraining processes for reflected $G$-SDEs with nonlinear constraints.
\begin{proposition}
Given two reflected $G$-BSDEs with parameters $(x^i,f^i,h^i,g,l,r)$, $i=1,2$ satisfying the assumptions in Theorem \ref{thm5.9}, we additionally suppose the following condition.
\begin{itemize}
\item  For each $t\in[0,T]$, $g$ does not depends on $x$ and either $f^1(t,\cdot),g^1(t,\cdot)$ or $f^2(t,\cdot),g^2(t,\cdot)$ are nondecreasing.
\end{itemize}
Let $(X^i,A^i)$ be the solution to reflected $G$-SDE with parameters $(x^i,f^i,h^i,g,l,r)$ and $(A^{i,l},A^{i.r})$ be the associated constraining processes, $i=1,2$. Then, we have
\begin{itemize}
\item[(1)] $A^{1,l}_t\leq A^{2,l}_t\leq A^{1,l}_t+\hat{X}_t+(x^2-x^1)$;
\item[(2)] $A^{2,r}_t\leq A^{1,r}_t\leq A^{2,r}_t+\hat{X}_t+(x^2-x^1)$,
\end{itemize}
where $\hat{X}_t=\int_0^t f^2(s,X^2_s)-f^1(s,X^1_s)ds+\int_0^t h^2(s,X^2_s)-h^1(s,X^1_s)d\langle B\rangle_s$.
\end{proposition}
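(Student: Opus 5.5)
The plan is to reduce the statement to the pathwise monotonicity result for the reflected problem, Proposition \ref{prop3.7}. The reflected $G$-SDE is, path by path, a Skorokhod problem for its own "free part". For $i=1,2$ set
\begin{align*}
S^i_t:=\int_0^t f^i(s,X^i_s)ds+\int_0^t h^i(s,X^i_s)d\langle B\rangle_s+\int_0^t g(s)dB_s .
\end{align*}
By the construction in Theorem \ref{thm4.3} and the uniqueness in Theorem \ref{thm5.3}, $(X^i,A^i)$ is the solution of the reflected problem with constraints $l,r$ for $x^i+S^i$. Its constraining processes are exactly $(A^{i,l},A^{i,r})$. Moreover $S^i\in S_G^p(0,T)$, because $X^i\in S_G^p(0,T)$ and the coefficients are Lipschitz.

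Since $g$ does not depend on $x$, the two stochastic integrals coincide, so $S^2=S^1+\hat{X}$ with $\hat{X}_0=0$. The key step is to show that $\hat{X}$ is nondecreasing q.s. Theorem \ref{thm5.9} applies because its hypotheses are assumed, and it gives $X^1_t\le X^2_t$ q.s. I read the extra condition as saying that either $f^1(t,\cdot),h^1(t,\cdot)$ or $f^2(t,\cdot),h^2(t,\cdot)$ are nondecreasing. If the second pair is monotone, then
\begin{align*}
f^2(s,X^2_s)\ge f^2(s,X^1_s)\ge f^1(s,X^1_s),
\end{align*}
and the same holds for $h$. If the first pair is monotone, I use $f^2(s,X^2_s)\ge f^1(s,X^2_s)\ge f^1(s,X^1_s)$ instead. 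Hence both integrands in $\hat{X}$ are nonnegative q.s. Since $\langle B\rangle$ is increasing, $\hat{X}$ is nondecreasing q.s. Finally, $\hat{X}\in S_G^p(0,T)$ as the difference of $S^2$ and $S^1$.

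Now apply Proposition \ref{prop3.7} with $(X,A)=(X^2,A^2)$, $c_0=x^2$, $S=S^2$, with $(\tilde{X},\tilde{A})=(X^1,A^1)$, $\tilde{c}_0=x^1$, $\tilde{S}=S^1$, and with $\nu=\hat{X}$. Since $x^1\le x^2$, we have $(c_0-\tilde c_0)^-=0$ and $(c_0-\tilde c_0)^+=x^2-x^1$. Part 1 of that proposition then reads $A^{2,r}_t\le A^{1,r}_t\le A^{2,r}_t+\hat{X}_t+(x^2-x^1)$, which is (2). Part 2 reads $A^{1,l}_t\le A^{2,l}_t\le A^{1,l}_t+\hat{X}_t+(x^2-x^1)$, which is (1).

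The main obstacle is the careful identification, outside a single polar set, of $(X^i,A^i)$ with the pathwise Skorokhod solutions $\mathbb{SP}_{l_\omega}^{r_\omega}(x^i+S^i(\omega))$. This uses uniqueness of the deterministic Skorokhod problem in Theorem \ref{thm1.1}. The same care is needed to check that $\hat{X}$ is nondecreasing for every $t$ simultaneously q.s., and not only for each fixed $t$. Continuity of the paths of $X^i$ and of the integrals lets us pass from rational times to all times. Once this is settled, the estimates come directly from Proposition \ref{proposition3.4} applied $\omega$ by $\omega$.
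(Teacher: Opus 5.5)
Your proposal is correct and matches the paper's proof: you use Theorem \ref{thm5.9} to get $X^1\le X^2$, show that $\hat{X}=S^2-S^1$ is nondecreasing, and then apply Proposition \ref{prop3.7} with $\nu=\hat{X}$, assigning the two equations to the roles needed to read off (1) and (2). Your reading of the extra hypothesis as monotonicity of $f^i,h^i$ (rather than $f^i,g^i$) is what the paper's proof actually uses, and treating both cases explicitly instead of saying ``without loss of generality'' is a harmless refinement.
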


\begin{proof}
For any $t\in[0,T]$, $i=1,2$, set 
\begin{align*}
\tilde{S}^i_t=\int_0^t f^i(s,X^i_s)ds+\int_0^t h^i(s,X^i_s)d\langle B\rangle_s+\int_0^t g(s)dB_s.
\end{align*} 
By Theorem \ref{thm5.9}, we have $X^1_t\leq X^2_t$, $t\in[0,T]$. Without loss of generality, suppose that $f^2(t,\cdot),g^2(t,\cdot)$ are nondecreasing. Since
\begin{align*}
\hat{X}_t=&\int_0^t f^2(s,X^2_s)-f^2(s,X^1_s)ds+\int_0^t h^2(s,X^2_s)-h^2(s,X^1_s)d\langle B\rangle_s\\
&+\int_0^t f^2(s,X^1_s)-f^1(s,X^1_s)ds+\int_0^t h^2(s,X^1_s)-h^1(s,X^1_s)d\langle B\rangle_s.
\end{align*} 
it follows that $\{\hat{X}_t\}_{t\in[0,T]}$ is a nondecreasing process. Note that $(X^i,A^i)$ can be viewed as the solution of the reflected problem with constraints $l,r$ for $x^i+\tilde{S}^i$, $i=1,2$ and $\tilde{S}^2_t-\tilde{S}^1_t=\hat{X}_t$. By Proposition \ref{prop3.7}, we obtain the desired result.
\end{proof}

\appendix
\renewcommand\thesection{Appendix}
\section{ }
\renewcommand\thesection{A}
\setcounter{equation}{0}
\renewcommand{\theequation}{A\arabic{equation}}

It is clear that $\int_0^T X_t dK_t$ given in Definition \ref{deflin} belongs to $L^0(\Omega_T)$. In the following, we will show under which condition, the random variable $\int_0^T X_t dK_t$ belongs to the $L_G^1(\Omega_T)$. Since the element $\xi$ in $L_G^1(\Omega_T)$ has a quasi-continuous version (see Theorem \ref{LGp}), except the integrability condition, we still need some continuity property of $X,K$ to ensure that  $\int_0^T X_t dK_t\in L_G^1(\Omega_T)$.

\begin{proposition}
For any $p>1$, suppose that $X\in S_G^p(0,T)$, $K\in M_I(0,T)$ and for any $t\in[0,T]$ $K_t\in L_G^q(\Omega_t)$, where $1/p+1/q=1$. Then, we have $\int_0^T X_t dK_t\in L_G^1(\Omega_T)$.
\end{proposition}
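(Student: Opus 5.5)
The plan is to approximate $\int_0^T X_t\,dK_t$ in the $L_G^1$-norm by Riemann–Stieltjes sums. Each sum clearly lies in $L_G^1(\Omega_T)$, and $L_G^1(\Omega_T)$ is complete, so the limit will be in $L_G^1(\Omega_T)$ as well.

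For each $n$, take the uniform partition $t_i=iT/n$ and set
\begin{align*}
I^n:=\sum_{i=0}^{n-1}X_{t_i}(K_{t_{i+1}}-K_{t_i}).
\end{align*}
Since $X\in S_G^p(0,T)$, each $X_{t_i}\in L_G^p(\Omega_T)$. By assumption, $K_{t_{i+1}}-K_{t_i}\in L_G^q(\Omega_T)$. The product of an $L_G^p$ and an $L_G^q$ element lies in $L_G^1$. To see this, approximate both factors by elements of $L_{ip}$ and apply Hölder's inequality under $\hat{\mathbb{E}}$, namely $\hat{\mathbb{E}}[|\xi\eta|]\leq(\hat{\mathbb{E}}[|\xi|^p])^{1/p}(\hat{\mathbb{E}}[|\eta|^q])^{1/q}$. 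Hence $I^n\in L_G^1(\Omega_T)$.

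Next we compare $I^n$ with the pathwise integral of Definition \ref{deflin}. Outside a polar set, $X$ is continuous and $K$ is continuous and increasing. On that set,
\begin{align*}
\left|\int_0^T X_t\,dK_t-I^n\right|\leq \sum_{i}\int_{t_i}^{t_{i+1}}|X_t-X_{t_i}|\,dK_t\leq \Big(\sup_{t\in[0,T]}\sup_{s\in[t,t+T/n]}|X_s-X_t|\Big)(K_T-K_0).
\end{align*}
Applying Hölder's inequality under $\hat{\mathbb{E}}$ gives
\begin{align*}
\hat{\mathbb{E}}\left[\left|\int_0^T X_t\,dK_t-I^n\right|\right]\leq \left(\hat{\mathbb{E}}\Big[\sup_{t}\sup_{s\in[t,t+T/n]}|X_s-X_t|^p\Big]\right)^{1/p}\left(\hat{\mathbb{E}}[|K_T-K_0|^q]\right)^{1/q}.
\end{align*}
The second factor is finite because $K_T,K_0\in L_G^q$. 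The first factor tends to $0$ by Proposition \ref{the3.7}. Here $\hat{\mathbb{E}}$ is understood as the upper expectation $\sup_{P\in\mathcal{P}}E_P$ on $L^0$, which is legitimate by Theorem \ref{the1.1}, and Hölder's inequality holds for each $P$ and hence for the supremum.

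It follows that $\{I^n\}$ is Cauchy in $L_G^1$. Its limit $\xi\in L_G^1(\Omega_T)$ satisfies $\sup_P E_P|\xi-\int_0^T X_t\,dK_t|=0$, so $\xi=\int_0^T X_t\,dK_t$ q.s. Therefore $\int_0^T X_t\,dK_t\in L_G^1(\Omega_T)$.

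The main obstacle is justifying this last identification. We must know that convergence in the $L_G^1$ norm of elements of $L_G^1$ to an arbitrary $L^0$ variable, measured by the upper expectation, places the limit in $L_G^1$ up to a polar set. I would handle this through the characterization in Theorem \ref{LGp}. The limit $\xi$ has a quasi-continuous version, the integral equals $\xi$ q.s., and the uniform-integrability condition transfers from $\xi$ because the two variables coincide q.s.
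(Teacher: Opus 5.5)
Your proposal is correct and follows the paper's proof essentially step for step. You use the Riemann--Stieltjes sums $\sum_i X_{t_i}(K_{t_{i+1}}-K_{t_i})\in L_G^1(\Omega_T)$, the pathwise bound by the increment of $K$ times the modulus of continuity of $X$, and H\"older's inequality together with Proposition \ref{the3.7}, exactly as the paper does. Your last paragraph (the $L_G^1$-limit coincides q.s.\ with the pathwise integral, so Theorem \ref{LGp} applies) spells out what the paper compresses into ``which implies the desired result,'' and your use of $K_T-K_0$ instead of $K_T$ is, if anything, slightly more careful.
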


\begin{proof}
Let $\{\pi^n_{[0,T]}\}_{n\in \mathbb{N}}$ be a sequence of refining partitions of $[0,T]$ (i.e., $\pi^n_{[0,T]}\subset \pi^{n+1}_{[0,T]}$, for any $n\in\mathbb{N}$) with $\pi^n_{[0,T]}=\{t^n_0,t^n_1,\dots, t^n_n\}$ such that $\lim_{n\rightarrow \infty}\mu(\pi^n_{[0,T]})=0$, where 
\begin{displaymath}
\mu(\pi^n_{[0,T]}):=\max_{k=0,1,\dots,n-1}|t_{k+1}^n-t^n_k|.
\end{displaymath}
Set 
\begin{displaymath}
\mathcal{V}^n_{[0,T]}(X,K)(\omega):=\sum_{k=0}^{n-1} X_{t^n_k}(\omega)(K_{t^n_{k+1}}(\omega)-K_{t^n_k}(\omega)).
\end{displaymath}
For each $n\in\mathbb{N}$, it is easy to check that  $\mathcal{V}^n_{[0,T]}(X,K)\in L_G^1(\Omega_T)$. Simple calculation yields that 
\begin{align*}
\left|\mathcal{V}^n_{[0,T]}(X,K)-\int_0^T X_t dK_t\right|\leq & \int_0^T\left|\sum_{k=0}^{n-1}X_{t^n_k} I_{[t_k^n,t^n_{k+1})}-X_t\right|dK_t\\
\leq &K_T \sup_{k=0,1,\dots,n-1}\sup_{t\in[t^n_k,t^n_{k+1})}|X_t-X_{t^n_k}|.
\end{align*}
By Proposition \ref{the3.7} and the H\"{o}lder inequality, we obtain that 
\begin{align*}
&\lim_{n\rightarrow \infty}\hat{\mathbb{E}}\left[\left|\mathcal{V}^n_{[0,T]}(X,K)-\int_0^T X_t dK_t\right|\right]\\
\leq& 
\lim_{n\rightarrow \infty}\left(\hat{\mathbb{E}}\left[\sup_{k=0,1,\dots,n-1}\sup_{t\in[t^n_k,t^n_{k+1})}|X_t-X_{t^n_k}|^p\right]\right)^{1/p}\left(\hat{\mathbb{E}}[|K_T|^q]\right)^{1/q}=0,
\end{align*}
which implies the desired result.
\end{proof}

\begin{remark}
Compared with Proposition 3.13 in \cite{Lin}, we do not need to assume that $X$ is a $G$-It\^{o} process taking the following form
\begin{displaymath}
X_t=x+\int_0^t f_s ds+\int_0^t h_s \langle B\rangle_s+\int_0^t g_s dB_s,
\end{displaymath}
where $f,h\in M_G^p(0,T)$ and $g\in H_G^p(0,T)$ with some $p>2$.
\end{remark}



\begin{thebibliography}{00}



\bibitem{BKR} Burdzy K, Kang W, Ramanan K. The Skorokhod problem in a time-dependent interval. Stochastic Processes and their Applications, 2009, 119: 428-452.

\bibitem{BN} Burdzy K, Nualart D. Brownian motion reflected on Brownian motion. Probab. Theor. Rel. Fields, 2002, 122: 471-493.

\bibitem{CE} Chaleyat-Maurel M, El Karoui N.  Un probl\`{e}me de r\'{e}flexion et ses applications au temps local et aux \'{e}quations diff\'{e}rentielles stochastiques sur $\mathbb{R}$, cas continu. Expos\'{e}s du S\'{e}minaire J. Az\'{e}ma-M. Yor. Held at the Universit\'{e} Pierre et Marie Curie, Paris, 1976-1977, pp. 117–144. Ast\'{e}risque, 52, 53, Soci\'{e}t\'{e} Math\'{e}matique de France, Paris, 1978.

\bibitem {DHP11} Denis L, Hu M, Peng S. Function spaces and capacity related to a sublinear expectation: application to $G$-Brownian motion paths. Potential Anal., 2011, 34: 139-161.



\bibitem{EK} El Karoui N, Karatzas I. A new approach to the Skorohod problem, and its applications. Stochastics Stochastics Rep., 1991, 34: 57-82.

\bibitem{G} Gao F. Pathwise properties and homeomorphic flows for stochastic differential equations driven by $G$-Brownian motion. Stochastic Process. Appl., 2009, 119: 3356–3382.















\bibitem{HH} Henderson V, Hobson D. Local time, coupling and the passport option. Finance Stoch., 2000, 4: 69-80.



\bibitem{HWZ} Hu M, Wang F, Zheng G. Quasi-continuous random variables and processes under the $G$-expectation framework. Stochastic Process. Appl., 2016, 126(8): 2367-2387.

\bibitem{JO} Jarni I, Ouknine Y. On reflection with two-sided jumps. Journal of Theoretical Probability, 2021, 34: 1811-1830.

\bibitem{KLRS} Kruk L, Lehoczky J, Ramanan K, Shreve S. An explicit formula for the Skorokhod map on $[0,a]$. Annals of Probability, 2007, 35(5): 1740-1768.

   \bibitem{Li} Li H. The Skorokhod problem with two nonlinear constraints. Probability and Mathematical Statistics, 2023, 43(2): 207-239.

   \bibitem{LN23} Li H, Ning N. Stochastic differential equations driven by $G$-Brownian motion with mean reflections, 2023, arXiv: 2306.08931v3.
   
   \bibitem{LN} Li H, Ning N. Doubly reflected backward SDEs driven by G-Brownian motions and fully nonlinear PDEs with double obstacles. Stochastics and Partial Differential Equations: Analysis and Computations, 2025, 13: 1279-1318.

   \bibitem{LPS} Li H, Peng S, Song Y. Supermartingale decomposition theorem under $G$-expectation. Electron. J. Probab., 2018, 23: 1-20.
  
   
   \bibitem{LiS} Li H, Song Y. Backward stochastic differential equations driven by $G$-Brownian motion with double reflections. Journal of Theoretical Probability, 2021, 34: 2285-2314.
   
   
   \bibitem{lp} Li X, Peng S. Stopping times and related It\^{o}'s calculus with $G$-Brownian motion. Stochastic Process. Appl., 2011, 121: 1492-1508.
   


\bibitem{Lin} Lin Y. Stochastic differential equations driven by $G$-Brownian motion with reflecting boundary conditions. Electron. J. Probab., 2013, 18: 1-24.

\bibitem{Lin2} Lin Y. \'{E}quations diff\'{e}rentielles stochastiques sous les esp\'{e}rances math\'{e}matiques non-lin\'{e}aires et applications. PhD thesis, 2013, Universit\'{e} de Rennes. 

\bibitem{LSH} Lin Y, Soumana Hima A.  Reflected stochastic differential equations driven by $G$-Brownian motion in non-convex domains. Stochastics and Dynamics, 2019, 19(3): 1950025.

\bibitem{LS} Lions P L,  Sznitman A L. Stochastic differential equations with reflecting boundary conditions. Comm. Pure Appl. Math., 1984, 37: 511–537.

\bibitem{Luo} Luo P. Reflected stochastic differential equations driven by $G$-Brownian motion with nonlinear resistance. Front. Math. China, 2016, 11(1): 123–140.

\bibitem{LW} Luo P, Wang F. Stochastic differential equations driven by $G$-Brownian motion and ordinary differential equations. Stochastic Process. Appl., 2014, 124(11): 3869-3885.




\bibitem{MM} Mandelbaum A, Massey W. Strong approximations for time-dependent queues. Math. Oper. Res., 1995, 20: 33-63.











\bibitem{P19} Peng S. Nonlinear Expectations and Stochastic Calculus Under Uncertainty: With Robust CLT and G-Brownian Motion. Probability Theory and Stochastic Modelling 95, Springer, 2019.

\bibitem{Skorokhod1} Skorokhod A.V. Stochastic equations for diffusions in a bounded region. Theory Probab. Appl., 1961, 6: 264-274.


\bibitem{S1} Slaby M. Explicit representation of the Skorokhod map with time dependent boundaries. Probability and Mathematical Statistics, 2010, 30: 29-60.
 
\bibitem{S2} Slaby M. An explicit representation of the extended Skorokhod map with two time-dependent boundaries. Journal of Probability and Statistics, 2010, 2010: 1-18.


\bibitem{Sl1} Slomi\'{n}ski L. On existence, uniqueness and stability of solutions of multidimensional SDE’s with reflecting conditions. Ann. lIHP Probab. Stat., 1993, 29: 163-198. 

\bibitem{Sl2} Slomi\'{n}ski L. On approximation of solutions of multidimensional SDEs with reflecting boundary conditions. Stoch. Process. Appl., 1994, 50: 197-219.

\bibitem{SW1} Slomi\'{n}ski L, Wojciechowski T. Stochastic differential equations with jump reflection at time-dependent barriers. Stoch. Process. Appl., 2010, 120: 1701-1721.

\bibitem{SW2} Slomi\'{n}ski L, Wojciechowski T.  Stochastic differential equations with time-dependent reflecting barriers. Stochastics, 2013, 85(1): 27-47.

\bibitem{SW} Soucaliuc F, Werner W. A note on reflecting Brownian motions. Electon. Commum. Probab., 2002, 7: 117-122.

\bibitem{SHD} Soumana Hima A, Dakaou I. Large deviation principle for reflected stochastic differential equations driven by $G$-Brownian motion in non-convex domains. Statistics and Probability Letters, 2023, 193: 109707.

\bibitem{Tanaka} Tanaka, H. Stochastic differential equations with reflecting boundary condition in convex regions. Hiroshima Math. J., 1979, 9: 163-177.

\bibitem{W} Whitt W. An Introduction to Stochastic Process Limits and Their Application to Queues. Springer, New York, 2002.


\end{thebibliography}




\end{document}